\newtheorem {theorem}    {Theorem}[section]
\newtheorem {lemma}      [theorem]    {Lemma}
\newtheorem {corollary}  [theorem]    {Corollary}
\newtheorem {proposition}[theorem]    {Proposition}
\newcommand{\bb}{\mathbb}
\renewcommand{\rm}{\mathrm}
\newcommand{\cal}{\mathcal}
\newcommand{\GG}{\mathrm{G}}
\newcommand{\GGL}{\mathrm{GL}}
\newcommand{\UU}{\mathrm{U}}
\newcommand{\Fq}{\bb{F}_q}
\newcommand{\fq}{(\bb{F}_q)}
\theoremstyle{definition}
\newtheorem{ex}[theorem]{Example}
\numberwithin{equation}{section}
\begin{document}

\title{On the Gan-Gross-Prasad problem for  finite unitary groups}

\date{\today}

\author[Dongwen Liu]{Dongwen Liu}

\address{School of Mathematical Science, Zhejiang University, Hangzhou 310027, Zhejiang, P.R. China}

\email{maliu@zju.edu.cn}

\author[Zhicheng Wang]{Zhicheng Wang}

\address{School of Mathematical Science, Zhejiang University, Hangzhou 310027, Zhejiang, P.R. China}

\email{11735009@zju.edu.cn}
\subjclass[2010]{Primary 20C33; Secondary 22E50}

\begin{abstract}
In this paper we study the Gan-Gross-Prasad  problem for unitary groups over finite fields. Our results provide complete answers for unipotent representations, and we obtain the explicit branching of these representations.\end{abstract}

\maketitle

\section{Introduction}

In \cite{GP1, GP2}, B. Gross and D. Prasad studied the restriction problem for special orthogonal groups over a local field and formulated a number of conjectures. Joint with W.T. Gan, in \cite{GGP1} they extended the conjecture to all classical groups, which are nowadays known as the  local Gan-Gross-Prasad conjecture. To be a little more precise, the multiplicity one property holds in this situation, namely for a relevant pair of classical groups $G\supset H$ and their irreducible admissible  representations $\pi$ and $\sigma$ respectively,
\[
m(\pi,\sigma):=\dim\rm{Hom}_H(\pi,\sigma)\leq 1;
\]
and the invariant attached to $\pi$ and $\sigma$ that detects the nonvanishing of the multiplicity $m(\pi,\sigma)$ is the local root number associated to their Langlands parameters, which are assumed to be generic.  In the $p$-adic case, the local Gan-Gross-Prasad conjecture  has been resolved by J.-L. Waldspurger and C. M\oe glin and J.-L. Waldspurger \cite{W1, W2, W3, MW} for orthogonal groups, by R. Beuzart-Plessis \cite{BP1, BP2} and W. T. Gan and A. Ichino \cite{GI} for unitary groups, and by H. Atobe \cite{Ato} for symplectic-metaplectic groups.

The main goal of this paper is to study the Gan-Gross-Prasad problem for unipotent representation of finite unitary groups. To begin with, we first set up some notations. Let $\overline{\mathbb{F}}_q$ be an algebraic closure of a finite field $\mathbb{F}_q$, which is of characteristic $p>2$.  Let $G=\UU_n$ be an $\bb{F}_q$-rational form of $\GGL_n(\overline{\mathbb{F}}_q)$, and $F$ be the corresponding Frobenius endomorphism, such that the group of $\Fq$-rational points  is
$G^F=\UU_n(\Fq)$.
Let $Z$ be the center of $G^F$. We will assume that $q$ is large enough such that the main theorem in \cite{S2} holds, namely assume that
 \begin{itemize}
 \item $q$ is large enough such that $T^F/Z$ has at least two Weyl group orbits of regular characters, for every $F$-stable maximal torus $T$ of $G$.
 \end{itemize}
For an $F$-stable maximal torus $T$ of $G$ and a character $\theta$ of $T^F$,  let $R_{T,\theta}^G$ be the virtual character of $G^F$ defined by P. Deligne and G. Lusztig in \cite{DL}. An irreducible representation $\pi$ is called unipotent if there is an $F$-stable maximal torus $T$ of $G$ such that $\pi$ appears in $R_{T,1}^G$. For two representations $\pi$ and $\pi'$ of a  finite group $H$, define
\[
\langle \pi,\pi'\rangle_H:=\dim \mathrm{Hom}_H(\pi,\pi').
\]

Let $\pi$ and $\pi'$ be  irreducible representations of $\UU_{n}(\Fq)$ and $\UU_{m}(\Fq)$ respectively, where $n\ge m$.
The Gan-Gross-Prasad problem is concerned with the multiplicity
\[
m(\pi, \pi'):=\langle \pi\otimes\bar{\nu},\pi' \rangle_{H(\Fq)} = \dim \mathrm{Hom}_{H(\Fq)}(\pi\otimes\bar{\nu},\pi' )
\]
where the data $(H, \nu)$ is defined as in \cite{GGP1} (see  \cite{LW} for details in this case). According to whether $n-m$ is odd or even, the above Hom space is called the Bessel model or Fourier-Jacobi model. In \cite[Proposition 5.3]{GGP2}, it was shown for the Bessel case  that if both $\pi$ and $\pi'$ are cuspidal, then
\[
m(\pi,\pi') \le 1.
\]
Our formulation of the models differs slightly from that in the Gan-Gross-Prasad conjecture \cite{GGP1}, up to taking the contragradient of $\pi'$. 
This is more convenient for our discussion, which will be clear from the context below. 
On the other hand, in this paper we focus on unipotent representations of $\UU_{n}(\Fq)$, which are self-dual (c.f. \cite[Proposition 6.6]{LW}) and thus for $\pi$ unipotent the above Hom space coincides with $\rm{Hom}_{H(\Fq)}(\pi\otimes\pi',\nu)$.

Recall from \cite{LS} that irreducible unipotent representations of $\UU_n(\Fq)$ are parameterized by irreducible representations of $S_n$. It is well-known that  the latter are  parameterized by partitions of $n$.   For a partition $\lambda$ of $n$, denote by $\pi_\lambda$ the  corresponding unipotent representation of $\UU_n(\Fq)$.  As is standard, we realize partitions  as Young diagrams, and denote by ${}^t\lambda$ the transpose of $\lambda$.  In \cite{AMR}, a notion of {\sl 2-transverse} for two partitions  was introduced, which will be recalled in details in Section \ref{ssec3.2}.

Our first main result is the following.
\begin{theorem}\label{1.1}
Assume that $n \ge m$. Let $\lambda$ and $\lambda'$ be partitions of $n$ and $m$ respectively. Then
\[
m(\pi_\lambda, \pi_{\lambda'})=\left\{
\begin{array}{ll}
1, &  \textrm{if }\ \lambda \textrm{ and } \lambda' \textrm{ are }  2\textrm{-transverse},\\
0, & \textrm{otherwise.}
\end{array}\right.
\]
\end{theorem}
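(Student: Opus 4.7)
The plan is to compute $m(\pi_\lambda,\pi_{\lambda'})$ by an inductive argument on $n+m$, reducing to a pair of smaller rank via a Mackey-type analysis of the restriction to the Gan-Gross-Prasad subgroup $H(\Fq)$ together with the parabolic presentation of $\pi_\lambda$ coming from its Harish-Chandra series. Recall that every unipotent $\pi_\lambda$ sits in a unique Harish-Chandra series attached to a cuspidal unipotent $\sigma$ on an $F$-stable Levi of the form $\UU_{n_0}(\Fq)\times\GGL_{r_1}(\Fq^2)\times\cdots\times\GGL_{r_k}(\Fq^2)$, where $n_0=s(s+1)/2$ is the size of the $2$-core of $\lambda$ and the $r_i$ are read off the $2$-quotient. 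This parameterization is the key bridge between the representation theory and the $2$-hook combinatorics that enters the statement.

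The first main step is to establish a \emph{descent formula}. Writing $H(\Fq)=\UU_m(\Fq)\ltimes U(\Fq)$, I would combine Frobenius reciprocity with the parabolic presentation of $\pi_\lambda$ to produce a recursion
\[
m(\pi_\lambda,\pi_{\lambda'})=\sum_{\mu} c(\lambda,\mu)\, m(\pi_{\mu},\pi_{\lambda'}),
\]
in which $\mu$ ranges over partitions of $n-2$ (or $n-1$, in the smallest Bessel step) obtained from $\lambda$ by removal of a $2$-hook, and $c(\lambda,\mu)\in\{0,\pm 1\}$ is determined by the sign combinatorics of Lusztig induction across the $\GGL_{r_i}(\Fq^2)$ factors. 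In the Fourier-Jacobi case one must also insert the Weil representation and invoke the description of its interaction with unipotent representations coming from the theta correspondence results in \cite{AMR}.

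The induction terminates at two base cases. When $n=m$, the subgroup $H(\Fq)$ collapses to the diagonal $\UU_m(\Fq)$ and $m(\pi_\lambda,\pi_{\lambda'})=\langle\pi_\lambda,\pi_{\lambda'}\rangle_{\UU_m(\Fq)}=\delta_{\lambda,\lambda'}$, which matches $2$-transversality trivially. When $n-m=1$, the multiplicity is the classical branching from $\UU_n$ to $\UU_{n-1}$ on unipotent representations, for which a direct character computation using Deligne-Lusztig theory applies. Once the recursion and base cases are in hand, the combinatorial half of the proof is to show that the indicator function of $2$-transversality, as defined in \cite{AMR} and to be recalled in Section $3.2$, satisfies the \emph{same} recursion with the same base values; this pins the multiplicity function uniquely and delivers the theorem.

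The principal obstacle lies in establishing the descent formula cleanly. The twisted Jacquet functor along $U$ interacts subtly with the Harish-Chandra series decomposition, and extracting a useable recursion forces one to control Mackey double-coset contributions and the signs arising from Lusztig induction across the $\GGL_{r_i}(\Fq^2)$ factors. The Fourier-Jacobi case is harder still, because the Weil representation enters and its behaviour on unipotent representations is only accessible through the theta correspondence machinery of \cite{AMR}; keeping track of how that machinery composes with the cuspidal-support filtration is the technical heart of the argument. By contrast, once the descent formula is established, the match with $2$-transversality should reduce to a reasonably clean bookkeeping exercise on $2$-hooks.
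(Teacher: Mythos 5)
Your proposal differs substantially from the paper's argument, and unfortunately it has two genuine gaps that would need real work to repair.

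The first is the base case at $n=m$. You claim that when $n=m$ the subgroup $H(\Fq)$ collapses to the diagonal $\UU_m(\Fq)$ and the multiplicity becomes $\langle\pi_\lambda,\pi_{\lambda'}\rangle_{\UU_m(\Fq)}=\delta_{\lambda,\lambda'}$. This is incorrect: when $n=m$ the model is Fourier--Jacobi with $H=\UU_m$, and the data $\nu$ is the (conjugate of the) Weil representation of $\UU_m(\Fq)$, so the multiplicity is $\langle\pi_\lambda\otimes\bar\omega_m,\pi_{\lambda'}\rangle_{\UU_m(\Fq)}$, not an inner product of unipotent characters. And indeed $\delta_{\lambda,\lambda'}$ does not match $2$-transversality at all: two equal partitions are $2$-transverse only when every part has even multiplicity, while distinct $2$-transverse partitions of the same size abound. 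The paper's induction instead bottoms out at $n=1$, where the Bessel case is vacuous and the Fourier--Jacobi case is the computation $\langle\mathbf{1}\otimes\omega_1,\mathbf{1}\rangle_{\UU_1(\Fq)}=0$.

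The second gap is that the ``descent formula'' you posit is never established, and you acknowledge it as the principal obstacle. That descent formula \emph{is} essentially the theorem: the twisted Jacquet/Mackey analysis you sketch is exactly what becomes intractable here, which is why the paper avoids it. The paper's actual route is: (i) Propositions \ref{7.21} and \ref{7.31} transfer $m(\pi,\pi')$ to an inner product against a Lusztig-induced representation $R^{\UU_{n+1}}_{\GG_\ell\times\UU_m}(\tau\otimes\pi')$ (an analogue of Theorems 15.1 and 16.1 of \cite{GGP1}, valid here because $\pi$ is unipotent and one can choose $\tau$ not conjugate self-dual, killing the unwanted Bernstein--Zelevinsky derivative terms); (ii) one then runs an induction on $n$ using see-saw diagrams that pair the basic Bessel case with the basic Fourier--Jacobi case, together with the explicit theta correspondence for unipotent representations from \cite{AMR} (Proposition \ref{u1}, Corollaries \ref{CU1}--\ref{CU3}), reducing the rank at each step by stripping the first row of one of the partitions. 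Your claim that the $n-m=1$ case is settled by ``a direct character computation using Deligne--Lusztig theory'' likewise undersells the problem: that basic Bessel case is precisely what requires the see-saw machinery. So while the $2$-hook combinatorics and the Harish--Chandra parameterization you invoke do play a role in the proof (they are encoded in the $2$-transversality condition and in the theta correspondence of \cite{AMR}), the engine driving the recursion is the theta correspondence and see-saw duality, not a Mackey decomposition of the restriction functor.
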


It is interesting to notify the connection beween the above branching rule and the theta correspondence. This connection can be built via the so-called Alvis-Curtis dual, which will be recalled in Section \ref{ssec2.1}. Let $\mu$ and $\mu'$ be partitions of $n$ and $m$ respectively, and $\omega_{n,m}$ be the Weil representation of $\UU_n\fq\times\UU_m\fq$ (see \cite{S2} for details). In \cite{AMR}, it was shown that the theta correspondence between unipotent representations is given by 
\[
\langle\pi_{\mu}\otimes\pi_{\mu'},\omega_{n,m}\rangle_{\UU_n\fq\times\UU_m\fq}=\left\{
\begin{array}{ll}
1, &  \textrm{if }\ {}^t\mu \textrm{ and } {}^t\mu' \textrm{ are }  2\textrm{-transverse},\\
0, & \textrm{otherwise.}
\end{array}\right.
\]
On the other hand, the Alvis-Curtis dual of $\pi_\lambda$ is known to be $\pi_{{}^t\lambda}$. Combining these facts, 
 the content of Theorem \ref{1.1} can be visualized as a diagram
\[
\setlength{\unitlength}{0.8cm}
\begin{picture}(20,5)
\thicklines
\put(5.8,4){$\pi_\lambda$}
\put(7.3,4.2){unipotent part of GGP}
\put(5.8,1){$\pi_{^t\lambda}$}
\put(13.1,4){$\bigoplus_{\lambda'}\pi_{\lambda'}$}
\put(13,1){$\bigoplus_{^t\lambda'}\pi_{^t\lambda'}$}
\put(8.4,1.3){Theta lifting}
\put(6,3.6){\vector(0,-1){2.1}}
\put(13.6,3.6){\vector(0,-1){2.1}}
\put(6.5,1.1){\vector(2,0){6.2}}
\put(6.5,4){\vector(2,0){6.2}}
\end{picture}
\]
where the vertical arrows stand for taking the Alvis-Curtis dual.

In special cases, this result overlaps with our previous work \cite{LW} on the descent problem for finite unitary groups. However, we have different point of views, and the main results are to some extent complementary to each other.

We will only prove an equivalent form of Theorem \ref{1.1} for the Bessel case;  the proof for the Fourier-Jacobi cases is similar and will be omitted.
Let us outline the strategy of the proof. First of all, Proposition \ref{7.21} and Proposition \ref{7.31} show that parabolic induction preserves multiplicities, which are finite field analogs of Theorem 15.1 and Theorem 16.1 in \cite{GGP1} respectively for unipotent representations. This reduces the calculation to the basic case. For the Bessel case,  in order to compute the right hand side of the equation
\[
m(\pi,\pi')=\langle \pi\otimes\bar{\nu}, \pi'\rangle _{H(\Fq)}=\langle R^{\UU_{n+1}}_{L}(\tau\otimes\pi'),\pi\rangle _{\UU_{n}(\Fq)}
\]
in Proposition \ref{7.21}, we shall reduce the index $n$ by using see-saw dual pairs. This will prove Theorem \ref{1.1} by induction on $n$. To apply the see-saw arguments, we need the explicit theta correspondence of unipotent representations of finite unitary groups, which is given in \cite{AMR}.

By Theorem \ref {1.1}, for a fixed unipotent representation $\pi$ of $\UU_n\fq$, we have an explicit description of the mulitplicities $m(\pi, \pi')$ for unipotent representations $\pi'$ of $\UU_m\fq$ with $m\leq n$. Our next goal is to describe $m(\pi,\pi')$ for an arbitrary representation $\pi'$ of $\UU_m\fq$. Our main tools are the Lusztig correspondence \cite{L} and Reeder's branching formula introduced in \cite{R} (c.f. \cite{LW}).

Recall that for $G^F=\UU_n\fq$ one has the dual group $G^{*F}=\UU_n\fq$. For a semisimple element $s \in G^{*F}$, we say  that $1\notin s$ if $1$ is not an eigenvalue of $s$.
Suppose that $s \in  \UU_n\fq$ is semisimple and conjugate to $\rm{diag}(s',1_{n-m})$ where $s' \in \UU_m\fq$ is semisimple and $1\notin s'$.
Let $P=LV$ be a parabolic subgroup of $\UU_n$ such that $L$ is $F$-stable and $s\in L^F\cong\UU_m\fq\times\UU_{n-m}\fq$, but $P$ is not necessarily $F$-stable. For each $\pi$ in the Lusztig series $\mathcal{E}(\UU_n\fq,s)$, by the Lusztig correspondence there exist unique $\pi'\in \mathcal{E}(\UU_m\fq,s')$ and $\pi_\lambda\in\mathcal{E}(\UU_{n-m}\fq,1)$ with $\lambda$ a partition of $n-m$ such that
\[
\pi=\pm R^{\UU_n}_{L}(\pi'\otimes\pi_\lambda),
\]
 where $R^{\UU_n}_{L}(\pi'\otimes\pi_\lambda)$ is the virtual representation defined by Deligne and Lusztig. Note that every irreducible representation of $\UU_n\fq$ is of this form. By abuse of notation, below we suppress the sign and simply denote by $R^{\UU_n}_{L}(\pi'\otimes\pi_\lambda)$ the irreducible representation.

Then our second main result is the following.

 \begin{theorem}\label{1.2}
Let $\lambda$ and $\lambda'$ be partitions of $n$ and $m$ respectively, $m\leq n$.
Let $\pi\in \mathcal{E}(\UU_\ell\fq, s)$ with $\ell+m\le n+1$ and $1\not\in s$. Then
\[
m(\pi_{\lambda},R_{\UU_{\ell}\times\UU_m}^{\UU_{\ell+m}}(\pi\otimes \pi_{\lambda'}) )=\left\{
\begin{array}{ll}
1, &  \textrm{if }\ \lambda \textrm{ and } \lambda' \textrm{ are }  2\textrm{-transverse and } \pi=\pi^{reg}_s,\\
0, & \textrm{otherwise,}
\end{array}\right.
\]
where $\pi_s^{reg}$ is the unique regular character in $\mathcal{E}(\UU_{\ell}\fq,s)$.
\end{theorem}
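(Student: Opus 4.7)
The plan is to reduce Theorem \ref{1.2} to Theorem \ref{1.1} by combining the Lusztig correspondence with Reeder's branching formula: the former to separate the non-unipotent piece $\pi$ from the unipotent characters, the latter to carry out the resulting restriction/induction calculation. Following the convention of the paper, I focus on the Bessel case; the Fourier-Jacobi case should be analogous.

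First, apply Proposition \ref{7.21} to rewrite the multiplicity as an inner product on $\UU_n\fq$ involving a Deligne-Lusztig induction from a Levi of $\UU_{n+1}$, and then use transitivity of Deligne-Lusztig induction to merge the outer induction with the induction $R^{\UU_{\ell+m}}_{\UU_\ell\times\UU_m}$ already built into the target representation. This yields an identity of the shape
\[
m(\pi_\lambda, R^{\UU_{\ell+m}}_{\UU_\ell\times\UU_m}(\pi\otimes\pi_{\lambda'})) = \langle R^{\UU_{n+1}}_{L'}(\tau\otimes\pi\otimes\pi_{\lambda'}),\pi_\lambda\rangle_{\UU_n\fq}
\]
for an appropriate Levi $L'\subset\UU_{n+1}$ of the form $\UU_\ell\times\UU_m\times T$ and an auxiliary representation $\tau$ dictated by the GGP data. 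Note that the pairing is taken on $\UU_n\fq$ while $R^{\UU_{n+1}}_{L'}(\tau\otimes\pi\otimes\pi_{\lambda'})$ is a character of $\UU_{n+1}\fq$, so a restriction from $\UU_{n+1}$ to $\UU_n$ is implicit.

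Since $\pi_\lambda$ lies in the unipotent series $\mathcal{E}(\UU_n\fq,1)$ while the induced representation lives in a non-unipotent Lusztig series of $\UU_{n+1}\fq$ (because $1\not\in s$), the pairing is governed precisely by the way restriction from $\UU_{n+1}\fq$ to $\UU_n\fq$ mixes Lusztig series. I would then invoke Reeder's branching formula to expand this restriction as an explicit sum of Deligne-Lusztig characters on $\UU_n\fq$ indexed by pairs $(T',\theta')$, and the Lusztig correspondence $\pi\leftrightarrow\pi^\circ\in\mathcal{E}(C_{\UU_\ell^*}(s)^F,1)$ to transfer the non-unipotent part of the calculation onto the centralizer group, which is itself a product of smaller unitary groups.

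The crucial observation, and the heart of the argument, is that the resulting sum exhibits extensive cancellation: the only surviving contribution should come from pairs $(T',\theta')$ with $\theta'$ in general position, which forces $\pi^\circ$ to be the Steinberg character of the centralizer, i.e., $\pi=\pi_s^{reg}$. Once $\pi$ is so identified, the remaining inner product reduces to a multiplicity between two unipotent characters of a smaller unitary group (essentially involving $\pi_\lambda$ and $\pi_{\lambda'}$ after the non-unipotent data has been eliminated), and Theorem \ref{1.1} then delivers the $2$-transverse criterion. The main obstacle will be carrying out this cancellation rigorously: one must keep careful track of the signs in Deligne-Lusztig theory, of the Jordan decomposition implemented by the Lusztig correspondence, and of the explicit form of $\tau$ from the GGP data. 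The fact that the answer is again governed by the same clean $2$-transverse condition as in Theorem \ref{1.1} is a strong indication that regularity of $\pi$ is indeed the sole surviving constraint on the non-unipotent side.
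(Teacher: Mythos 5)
Your high-level plan names the right tools (Proposition \ref{7.21}, Reeder's formula, the Lusztig correspondence, reduction toward Theorem \ref{1.1}), and it correctly anticipates that regularity of $\pi$ is the surviving condition. However, the core mechanism in your argument is left as ``extensive cancellation,'' and this is precisely where the paper does something sharper and more specific. The actual engine is Proposition \ref{proposition:A}, a consequence of Reeder's formula stating that $\varepsilon_{T_1}\langle R^G_{T_1\times T_2,\theta\otimes 1}, R^H_{S,1}\rangle_{H^F}$ is \emph{independent} of the pair $(T_1,\theta)$ as long as $1\notin(T_1,\theta)$. This is not a cancellation phenomenon tied to $\theta'$ being in general position; rather, every torus $T^*$ contributes the same quantity up to the sign $\varepsilon_T$. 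Writing $\pi=\sum_{T^*\ni s}C_{T^*}R^{\UU_\ell}_{T^*,s}$, the sum $\varepsilon_{T_0}\sum_{T^*}\varepsilon_T C_{T^*}$ then collapses to $\pm\langle\pi_s^{reg},\pi\rangle_{\UU_\ell\fq}$ by the explicit formula (\ref{reg}) for $\pi^{reg}$, which is what isolates the regular character. This is Corollary \ref{5c1}, and without it your ``cancellation'' claim has no rigorous content: a priori different tori could contribute in incompatible ways, and the argument would not pick out $\pi_s^{reg}$.

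There is a second, more structural gap. You assert that after eliminating the non-unipotent data, the problem ``reduces to a multiplicity between two unipotent characters'' and Theorem \ref{1.1} finishes. That reduction does not go through directly. After applying Corollary \ref{5c1}, one is left computing $\langle R^{\UU_{n+1}}_{\UU_\ell\times\UU_{n+1-\ell}}(\pi^{reg}_{s_0}\otimes\pi_{\lambda'}),\pi_\lambda\rangle_{\UU_n\fq}$ for a \emph{regular} semisimple $s_0$, which is not a pairing of two unipotent representations; the presence of $\pi^{reg}_{s_0}$ persists. The paper handles this with Proposition \ref{UU2}, a genuine extension of Theorem \ref{UU} proved by a fresh see-saw induction, crucially relying on Proposition \ref{t2} (the theta lifting of $R^{\UU_{n+1}}_{\UU_\ell\times\UU_{n+1-\ell}}(\pi^{reg}_{s_0}\otimes\pi_{\lambda'})$ from \cite{AMR}) rather than the purely unipotent Proposition \ref{theta}. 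One also needs Proposition \ref{irr} to know that the relevant induced representations are irreducible, which requires choosing $\tau$ so that the auxiliary semisimple element $t'$ and $s$ share no eigenvalues; that choice in turn uses the hypothesis on $q$. Your proposal does not flag any of these steps, and they are not cosmetic: without Propositions \ref{t2}, \ref{UU2} and \ref{irr}, the reduction to the $2$-transverse criterion is incomplete.
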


It will be interesting to isolate the so-called basic case that $n-m=1$ or $0$.  Then Theorem \ref{1.2} gives us the following explicit spectral decompositions,  which extends  \cite[Theorem 1.2 and Theorem 1.4]{HZ} in the case of finite unitary groups.

 \begin{corollary}\label{1.3}
Let $\lambda$ be a partition of $n$. Then the following hold.

(i) $\pi_\lambda|_{\UU_{n-1}\fq}$ has the multiplicity-free decomposition
\[
\pi_{\lambda}|_{\UU_{n-1}\fq}=\bigoplus_{\lambda', s} R^{\UU_{n-1}}_{\UU_{n-1-m}\times\UU_m}(\pi^{reg}_s\otimes\pi_{\lambda'}),
\]
where the sum runs over partitions $\lambda'$ of $m<n$ such that $\lambda$ and $\lambda'$ are $2$-transverse, and semisimple conjugacy classes of $s\in\UU_{n-1-m}\fq$ such that $1\not\in s$.

(ii) Let $\omega_n$ be the Weil representation of $\UU_n\fq$. Then $\pi_{\lambda}\otimes\omega_{n}$ has the multiplicity-free decomposition
\[
\pi_{\lambda}\otimes\omega_{n}=\bigoplus_{\lambda',s} R^{\UU_n}_{\UU_n\times\UU_{n-m}}(\pi^{reg}_s\otimes\pi_{\lambda'}),
\]
where the sum runs over partitions $\lambda'$ of $m\le n$ such that $\lambda$ and $\lambda'$ are $2$-transverse, and  semisimple conjugacy classes of $s\in \UU_{n-m}\fq$ such that $1\not\in s$.
\end{corollary}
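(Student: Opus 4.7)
The plan is to deduce both parts directly from Theorem~\ref{1.2} by specializing to the two \emph{basic} cases of the Gan--Gross--Prasad problem. In the basic Bessel case $n-(\ell+m)=1$, the pair $(H,\nu)$ is simply $(\UU_{n-1},\mathbf{1})$, so
\[
m(\pi_\lambda,\sigma)=\langle\pi_\lambda|_{\UU_{n-1}\fq},\,\sigma\rangle_{\UU_{n-1}\fq}.
\]
In the basic Fourier--Jacobi case $\ell+m=n$ it is $(H,\nu)=(\UU_n,\omega_n)$, and the self-duality of unipotent representations recorded in the introduction gives
\[
m(\pi_\lambda,\sigma)=\langle\pi_\lambda\otimes\omega_n,\,\sigma\rangle_{\UU_n\fq}.
\]
Hence the two assertions of the corollary are precisely the spectral decompositions dual, under Frobenius reciprocity, to the basic-case values of $m(\pi_\lambda,\sigma)$ which Theorem~\ref{1.2} already computes.

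For part (i), I would invoke the Lusztig correspondence to write an arbitrary irreducible $\sigma$ of $\UU_{n-1}\fq$ as
\[
\sigma\cong R^{\UU_{n-1}}_{\UU_{n-1-m}\times\UU_m}(\pi\otimes\pi_{\lambda'})
\]
for a uniquely determined triple $(m,s,\lambda')$ with $0\le m<n$, $\pi\in\cal{E}(\UU_{n-1-m}\fq,s)$, $1\notin s$, and $\lambda'\vdash m$. Feeding this into Theorem~\ref{1.2} with $\ell=n-1-m$ (so that $\ell+m=n-1\le n+1$), the multiplicity $m(\pi_\lambda,\sigma)$ is $1$ exactly when $\lambda$ and $\lambda'$ are $2$-transverse and $\pi=\pi_s^{reg}$, and $0$ otherwise. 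Summing the isotypic components of $\pi_\lambda|_{\UU_{n-1}\fq}$ over the admissible triples produces the decomposition in (i); the value being bounded by $1$ gives the multiplicity-freeness as a bonus.

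Part (ii) proceeds along identical lines in the basic Fourier--Jacobi setting. Parameterizing an arbitrary $\sigma\in\mathrm{Irr}\,\UU_n\fq$ as $R^{\UU_n}_{\UU_{n-m}\times\UU_m}(\pi\otimes\pi_{\lambda'})$ with $\pi\in\cal{E}(\UU_{n-m}\fq,s)$ and $1\notin s$, and applying Theorem~\ref{1.2} with $\ell=n-m$, one reads off that the $\sigma$ appearing in $\pi_\lambda\otimes\omega_n$ are precisely those with $\pi=\pi_s^{reg}$ and $(\lambda,\lambda')$ $2$-transverse, each with multiplicity one. There is no serious obstacle beyond a clean identification of the data $(H,\nu)$ in the two basic cases and the resulting translation of the Gan--Gross--Prasad multiplicity into an ordinary branching multiplicity; once that is in place Theorem~\ref{1.2} together with the exhaustiveness of Lusztig's parameterization does all the real work.
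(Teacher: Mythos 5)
Your proposal is correct and follows the same route the paper intends: specialize Theorem~\ref{1.2} to the two basic cases $\ell+m=n-1$ (Bessel) and $\ell+m=n$ (Fourier--Jacobi), read off $(H,\nu)$ as $(\UU_{n-1}\fq,\mathbf{1})$ and $(\UU_n\fq,\omega_n)$ respectively, and use the Jordan decomposition/Lusztig correspondence to parametrize an arbitrary irreducible $\sigma$ of $\UU_{n-1}\fq$ (resp.\ $\UU_n\fq$) uniquely as $R^{\UU_{n-1}}_{\UU_{n-1-m}\times\UU_m}(\pi\otimes\pi_{\lambda'})$ (resp.\ $R^{\UU_n}_{\UU_{n-m}\times\UU_m}(\pi\otimes\pi_{\lambda'})$) with $1\notin s$, so that Theorem~\ref{1.2} computes every isotypic multiplicity. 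The paper states the corollary as an immediate consequence without writing out this bookkeeping, and your argument supplies exactly what is left implicit. One small attribution slip: in the Fourier--Jacobi reduction, the identity $\langle\pi_\lambda\otimes\bar{\omega}_n,\sigma\rangle=\langle\pi_\lambda\otimes\omega_n,\sigma\rangle$ uses the self-duality of the Weil representation $\omega_n$ (as the paper invokes via G\'erardin), not the self-duality of the unipotent representation $\pi_\lambda$; the latter is what lets one rewrite the Hom space as $\mathrm{Hom}_{H}(\pi\otimes\pi',\nu)$, which is a different manipulation. This does not affect the validity of the conclusion.
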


It is not surprising that the branching rules for the Bessel case and the Fourier-Jacobi case in Corollary \ref{1.3} look very similar. Indeed these two models are closely related to each other by a see-saw diagram.  One can also compare with the $p$-adic case, for which the branching rules for both models are governed by a distinguished pair of characters of the component groups of L-parameters, that are prescribed by the local Gan-Gross-Prasad conjecture using local root numbers. 

Finally we have a few remarks for the Bessel case about the assumption on $\bb{F}_q$.
\begin{itemize}

\item Proposition \ref{7.21} holds without the assumption on $q$ in \cite{S2}. In other words, 
\[
m(\pi,\pi')=\langle \pi\otimes\bar{\nu}, \pi'\rangle _{H(\Fq)}=\langle R^{\UU_{n+1}}_{L}(\tau\otimes\pi'),\pi\rangle _{\UU_{n}(\Fq)}
\] 
holds for any $\bb{F}_q$ with $q$ odd.

\item Since any irreducible representation of $\UU_n\fq$ is uniform, we can calculate the right hand side of the above equation using Reeder's branching formula introduced in \cite{R} (c.f. \cite{LW}), which asserts that the multiplicity is a polynomial of $q$. Moreover, by Proposition \ref{proposition:A}, the multiplicity is a constant.

\item The multiplicity in the Bessel case of Theorem \ref{1.2} is a constant if $q$ is large enough. 
\end{itemize}
It follows that result for the Bessel case in Theorem \ref{1.2} holds for any $\bb{F}_q$ with $q$ odd.

 This paper is organized as follows. In Section \ref{sec2}, we briefly recall the theory of Deligne-Lusztig characters and classification of representation of finite unitary groups. In Section \ref{sec3}, we recall the theory of Weil representation, theta correspondence and see-saw dual pairs. In Section \ref{sec4} we prove Theorem \ref{1.1}. In Section \ref{sec5} we  prove Theorem \ref{1.2}.

\section{Deligne-Lusztig characters} \label{sec2}
Let $G$ be a connected reductive algebraic
group over $\mathbb{F}_q$. In \cite{DL}, P. Deligne and G. Lusztig defined a virtual character $R^{G}_{T,\theta}$ of $G^F$, associated to an $F$-stable maximal torus $T$ of $G$ and a character $\theta$ of $T^F$. The construction of Deligne-Lusztig characters makes use of the theory of $\ell$-adic cohomology. We only review some standard facts on these characters (cf. \cite[Chapter 7]{C}), which will be used in this paper.

More generally, let $L$ be an $F$-stable Levi subgroup of a parabolic subgroup $P$ which is not necessarily $F$-stable, and $\pi$ be a representation of the group $L^F$. Then $R^G_L(\pi)$ is a virtual character of $G^F$. If $P$ is $F$-stable, then the Deligne-Lusztig induction coincides with the parabolic induction
\[
R^G_L(\pi)= \rm{Ind}^{G^F}_{P^F}(\pi).
\]
For example if $T$ is contained in an $F$-stable Borel subgroup $B$, then
\[
R^G_{T,\theta}=\rm{Ind}^{G^F}_{B^F}\theta.
\]
 In general, if $y = su$ is the Jordan decomposition of an element $y \in G^F$, then
\begin{equation}\label{dl}
R^{G}_{T,\theta}(y)=\frac{1}{|C^{0}(s)^F|}\sum_{g\in G,s^g \in T}\theta(s^{g})Q^{C^{0}(s)}_{{}^{g}T}(u)
\end{equation}
where $C^{0}(s) = C^{0}_{G}(s)$ is the connected component of the centralizer of $s$ in $G$, and $Q^{C^{0}(s)}_{{}^{g}T} = R^{ C^{0}(s)}_{{}^{g}T,1}(u)$ is the Green function of $C^{0}(s)$ associated to ${}^{g}T$. Note that $s^{g}=g^{-1}sg \in T$ if and only if ${}^{g}T=gTg^{-1} \subset C^{0}(s)$.

\begin{ex}
$\UU_n\times \UU_m$ can be embedded as an $F$-stable Levi subgroup of $\UU_{n+m}$, which is not a Levi subgroup of any $F$-stable parabolic subgroup.
\end{ex}

The following facts are standard.

\begin{proposition}[Induction in stages] \label {3.1}
 Let $Q \subset P$ be two parabolic subgroups of $G$, with $F$-stable Levi subgroups $M\subset L$ respectively. Then
\[
R^G_L \circ R_M^L  = R_M^G.
\]
\end{proposition}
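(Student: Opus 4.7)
The plan is to work at the level of virtual bimodules on Deligne-Lusztig varieties. For a parabolic $P=LU$ with $L$ an $F$-stable Levi and $U=R_uP$, recall that Deligne-Lusztig induction has the cohomological realization
\[
R^G_L(\pi) = \sum_i (-1)^i H^i_c(Y^G_U, \overline{\bb{Q}}_\ell) \otimes_{\overline{\bb{Q}}_\ell L^F} \pi,
\]
where $Y^G_U = \{gU \in G/U : g^{-1}F(g) \in U\cdot F(U)\}$ carries commuting left $G^F$ and right $L^F$ actions. With this in mind, the transitivity reduces to establishing an identity of virtual $(G^F, M^F)$-bimodules
\[
\sum_i (-1)^i H^i_c(Y^G_V) \;\cong\; \Bigl(\sum_i (-1)^i H^i_c(Y^G_U)\Bigr) \otimes_{\overline{\bb{Q}}_\ell L^F} \Bigl(\sum_i (-1)^i H^i_c(Y^L_{V\cap L})\Bigr),
\]
where $V=R_uQ$. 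Once this is known, tensoring on the right with a representation $\pi$ of $M^F$ yields exactly $R^G_M(\pi) = R^G_L(R^L_M(\pi))$.

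To produce this identity, I would exploit the following structural facts. Because $Q\subset P$, we have $U\subset V$, and $Q\cap L$ is a parabolic of $L$ with Levi $M$ and unipotent radical $V\cap L$. This gives an $F$-equivariant semidirect product decomposition $V = U \rtimes (V\cap L)$. Using it, I would construct a canonical $G^F\times M^F$-equivariant morphism $Y^G_V \to Y^G_U$ whose fibers are $L^F$-equivariantly isomorphic to the Deligne-Lusztig variety $Y^L_{V\cap L}$ attached to the pair $M\subset L$. The Leray spectral sequence applied to this fibration then produces the bimodule identity above; at the level of Euler characteristics this is just Künneth.

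The main obstacle is the geometric lemma: checking that $Y^G_V \to Y^G_U$ is well-defined, $F$-equivariant, commutes with both the $G^F$- and $M^F$-actions, and is genuinely a locally trivial fibration with the stated fiber. This amounts to tracking the decomposition $V = U \rtimes (V\cap L)$ through the defining conditions $g^{-1}F(g)\in U\cdot F(U)$ and $g^{-1}F(g)\in V\cdot F(V)$, which is routine but notationally heavy. If one prefers to avoid $\ell$-adic geometry entirely, an alternative is to decompose $R^L_M(\pi)$ as a virtual sum of $R^L_{T,\theta}$'s and reduce to transitivity for tori, where a direct argument via the character formula \eqref{dl} and the independence of $R^G_{T,\theta}$ from the chosen parabolic suffices.
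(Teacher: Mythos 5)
The paper states Proposition~\ref{3.1} as a ``standard fact'' and gives no proof, so there is no in-paper argument to compare against. Your main argument is essentially the standard proof of transitivity of Lusztig induction (cf.\ Digne--Michel, \emph{Representations of Finite Groups of Lie Type}, or Lusztig's original work): realize $R^G_L$ cohomologically on the variety $Y^G_U$, use the $F$-equivariant semidirect decomposition $V=U\rtimes(V\cap L)$ coming from $Q\cap L$ being a parabolic of $L$ with unipotent radical $V\cap L$, and deduce the virtual $(G^F,M^F)$-bimodule identity. One refinement worth naming: what one actually proves is a $G^F\times M^F$-equivariant isomorphism of varieties
\[
Y^G_V \;\cong\; Y^G_U \times_{L^F} Y^L_{V\cap L}
\]
(quotient of the product by the anti-diagonal $L^F$-action), rather than a locally trivial fibration; the bimodule identity then follows from the K\"unneth formula and the fact that $L^F$-equivariant cohomology of the product computes the tensor product over $\overline{\bb{Q}}_\ell L^F$. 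Your ``fibration'' heuristic is the right geometric picture, but the literal statement you would formalize is this quotient-of-products isomorphism.

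The alternative route you float at the end has a genuine gap in general: to write $R^L_M(\pi)$ as a $\overline{\bb{Q}}_\ell$-combination of $R^L_{T,\theta}$'s you need $R^L_M(\pi)$ to be \emph{uniform}, which fails for general $G$ and $\pi$; and to derive it from uniformity of $\pi$ you would already need the torus case of transitivity, $R^L_M(R^M_{T,\theta})=R^L_{T,\theta}$. In the paper's setting of $\UU_n$ every character is uniform, so the reduction works there, but the torus case of transitivity (together with the Deligne--Lusztig independence of $R^G_{T,\theta}$ from the auxiliary Borel) is itself proved by the same kind of cohomological fibration argument. So the alternative is not actually a lighter or more elementary path; it merely shifts the geometric input to the torus case.
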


\begin{proposition}[Weak orthogonality] \label {3.2}
Let $T_1$ and $T_2$ be two $F$-stable maximal tori of $G$. Set
\[
N_G(T_1,T_2) = \{g \in G|{^gT_1} = T_2\},
\]
 and $W_G(T_1,T_2):=T_1\backslash N_G(T_1,T_2)\cong  N_G(T_1,T_2)/T_2$. Then
\[
\langle R^G_{T_1,\theta_1},R^G_{T_2,\theta_2}\rangle_{G^F}=\#\{w\in W_G(T_1,T_2)^F| {^wT_1}=T_2\textrm{ and } {^w\theta_1}=\theta_2\}.
\]
In particular, if $T_1$ and $T_2$ are not $G^F$-conjugate, then
$
\langle R^G_{T_1,\theta_1},R^G_{T_2,\theta_2}\rangle_{G^F}=0;
$
and
\[
\langle R^G_{T,\theta},R^G_{T,\theta}\rangle_{G^F}=|W_{T}(\theta)^F|,
\]
where
\[
W_{T}(\theta)=\{w\in W_G(T)  :  {}^w\theta=\theta\}.
\]
\end{proposition}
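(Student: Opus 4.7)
The plan is to follow the classical Deligne-Lusztig approach, combining the character formula \eqref{dl} with the orthogonality of Green functions. First I would unfold the inner product by definition,
\[
\langle R^G_{T_1,\theta_1},R^G_{T_2,\theta_2}\rangle_{G^F}=\frac{1}{|G^F|}\sum_{y\in G^F}R^G_{T_1,\theta_1}(y)\overline{R^G_{T_2,\theta_2}(y)},
\]
substitute \eqref{dl} for each factor, and decompose every $y$ according to its Jordan form $y=su$. Swapping the order of summation reorganizes the calculation as a sum over semisimple classes $s\in G^F$, inside which the unipotent variable $u$ runs over $(C^0_G(s)^F)_{\mathrm{uni}}$ and contributes via products of Green functions $Q^{C^0_G(s)}_{{}^gT_1}(u)\overline{Q^{C^0_G(s)}_{{}^{g'}T_2}(u)}$ for appropriate $g,g'\in G^F$ with $s^g\in T_1$ and $s^{g'}\in T_2$.

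Next I would invoke the basic orthogonality of Green functions (cf.\ \cite[Ch.\ 7]{C}): the inner unipotent sum vanishes unless the tori ${}^gT_1$ and ${}^{g'}T_2$ are $C^0_G(s)^F$-conjugate, in which case it counts the number of conjugating elements. After bookkeeping, the surviving contributions are parametrized by pairs consisting of a class $w\in W_G(T_1,T_2)^F$ and an element $t\in T_1^F$, weighted by $\theta_1(t)\overline{\theta_2({}^w t)}$. Summing over $t\in T_1^F$ first, character orthogonality on $T_1^F$ collapses $\sum_{t\in T_1^F}\theta_1(t)\overline{\theta_2({}^w t)}$ to $|T_1^F|\cdot[{}^w\theta_1=\theta_2]$, and after the residual volume factors cancel one recovers precisely the asserted count $\#\{w\in W_G(T_1,T_2)^F\mid {}^w\theta_1=\theta_2\}$. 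The specializations, both to non-$G^F$-conjugate $T_1,T_2$ and to $T_1=T_2=T$ with $\theta_1=\theta_2=\theta$, follow at once.

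The main obstacle is the careful double-sum bookkeeping required to move cleanly from the unfolded expression to the final count, in particular the coherent tracking of conjugation data across $G$, the centralizers $C^0_G(s)$, and the Weyl transporters; the one genuinely nontrivial ingredient, which I would cite from \cite[Chapter 7]{C} rather than reprove, is the orthogonality of Green functions.
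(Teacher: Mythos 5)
The paper offers no proof of Proposition~\ref{3.2}: it lists it under ``The following facts are standard'' and defers to \cite[Chapter 7]{C}. Your sketch correctly reproduces the standard argument of Carter (Theorem~7.3.4 there): expand via the character formula~\eqref{dl}, sum over unipotent parts using orthogonality of Green functions for $C^0_G(s)$, then sum over semisimple parts using character orthogonality on the torus, with Green function orthogonality rightly taken as an external (cohomological) input rather than reproved.
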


\subsection{Unipotent representations and duality}\label{ssec2.1}
The classification of the representations of $\UU_n(\Fq)$ was given by Lusztig and Srinivasan in \cite{LS}. Denote by $W_n\cong S_n$ the Weyl group of the diagonal torus in  $\UU_n(\Fq)$.

\begin{theorem}\label{thm3.3}
Let $\sigma$ be an irreducible representation of $S_n$. Then
\[
R_\sigma^{\UU_n}:=\frac{1}{|W_n|}\sum_{w\in W_n}\sigma(ww_0)R_{T_w,1}^{\UU_n}
\]
is (up to sign) a unipotent representation of $\UU_n(\Fq)$ and all unipotent representations of $\UU_n(\Fq)$ arise in this way.\end{theorem}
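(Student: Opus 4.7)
The plan is to verify the theorem in two stages: first that each $R_\sigma^{\UU_n}$ is a virtual character of norm one (hence equals a signed irreducible character), then that these exhaust all unipotent representations via a linear-algebra completeness argument.

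First I would fix the parametrization of $F$-stable maximal tori. For $G = \UU_n$ the Frobenius acts on $W_n \cong S_n$ as conjugation by the longest element $w_0$, i.e.\ $F(v) = w_0 v w_0^{-1}$. A short manipulation shows that the twisted-conjugation relation $v w_1 F(v)^{-1} = w_2$ is equivalent to the ordinary-conjugation relation $v(w_1 w_0) v^{-1} = w_2 w_0$. In particular, $F$-conjugacy classes in $W_n$ correspond bijectively to ordinary conjugacy classes via $w \mapsto w w_0$, which is exactly the twist appearing in the definition of $R_\sigma^{\UU_n}$. Accordingly, the $G^F$-conjugacy classes of $F$-stable maximal tori $\{T_w\}_{w \in W_n}$ are parametrized by the ordinary conjugacy classes of $w w_0$ in $W_n$.

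The central computation is the inner product
\[
\langle R_\sigma^{\UU_n}, R_\tau^{\UU_n}\rangle_{G^F}
= \frac{1}{|W_n|^2}\sum_{w_1, w_2 \in W_n}\sigma(w_1 w_0)\tau(w_2 w_0)\,\langle R^{\UU_n}_{T_{w_1},1}, R^{\UU_n}_{T_{w_2},1}\rangle_{G^F}.
\]
By Proposition \ref{3.2} applied to the trivial characters, the pairing on the right equals $|W_G(T_{w_1},T_{w_2})^F|$, which by the standard formula coincides with the cardinality of $\{v \in W_n : v w_1 F(v)^{-1} = w_2\}$. Under the substitution $u = w_1 w_0$, $u' = w_2 w_0$, this is the number of $v$ conjugating $u$ to $u'$, namely $|C_{W_n}(u)|$ if $u \sim u'$ and $0$ otherwise. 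Summing first over $u'$ in the conjugacy class of $u$ and using $\sum_{u' \sim u}\tau(u') = (|W_n|/|C_{W_n}(u)|)\tau(u)$, the double sum telescopes into
\[
\frac{1}{|W_n|}\sum_{u \in W_n}\sigma(u)\tau(u) = \langle \sigma, \tau\rangle_{W_n} = \delta_{\sigma, \tau}.
\]
Thus $R_\sigma^{\UU_n}$ is a virtual character of norm one, and therefore equals $\pm 1$ times an irreducible character of $G^F$; moreover, distinct irreducible $\sigma$ yield inequivalent irreducibles.

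To conclude that every unipotent representation appears among the $\pm R_\sigma^{\UU_n}$, I would invert the defining linear relation. The change-of-basis matrix expressing $(R_\sigma^{\UU_n})_\sigma$ in terms of $(R^{\UU_n}_{T_w,1})_w$ is, once terms are collected by conjugacy class, essentially the character table of $S_n$ with columns indexed by the ordinary conjugacy classes $[w w_0]$. Since the character table of $S_n$ is invertible, the two families span the same subspace of class functions on $G^F$. Hence any irreducible constituent of any $R^{\UU_n}_{T_w, 1}$ — by definition a unipotent representation — must lie in this span and equal some $\pm R_\sigma^{\UU_n}$. Together with the standard bijection between irreducible representations of $S_n$ and partitions of $n$, this completes the classification. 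The main obstacle is keeping the Frobenius twist consistent throughout, in particular confirming that weak orthogonality (Proposition \ref{3.2}) translates cleanly into the character-table orthogonality for $W_n$; once the substitution $u = w w_0$ is in place, the rest reduces to elementary character theory of the symmetric group.
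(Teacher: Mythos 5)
The paper does not actually prove Theorem~\ref{thm3.3}; it is stated as a citation of the Lusztig--Srinivasan classification~\cite{LS}, so there is no in-paper proof to compare against. That said, your argument is correct and is essentially the original Lusztig--Srinivasan proof: the two pillars are (a) the identification of $F$-twisted conjugacy in $W_n$ with ordinary conjugacy of $ww_0$ (which is where the $w_0$-twist in the definition of $R_\sigma^{\UU_n}$ comes from), and (b) the inner-product computation via weak orthogonality (Proposition~\ref{3.2}), which upon the change of variable $u=ww_0$ collapses to the first orthogonality relation for characters of $S_n$, yielding $\langle R_\sigma^{\UU_n},R_\tau^{\UU_n}\rangle_{G^F}=\delta_{\sigma,\tau}$. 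A virtual character of self-pairing $1$ is $\pm$ an irreducible character, and the change-of-basis matrix being (a rescaled) character table of $S_n$ gives the completeness.

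Two small expository points worth tightening. First, in the completeness step, the phrase ``any irreducible constituent of $R^{\UU_n}_{T_w,1}$ must lie in this span'' is not literally what happens: an irreducible constituent of a virtual sum need not lie in its span. The correct argument is that if $\pi$ is unipotent then $\langle\pi,R^{\UU_n}_{T_w,1}\rangle\ne 0$ for some $w$, and since the $R^{\UU_n}_{T_w,1}$ and the $R_\sigma^{\UU_n}$ span the same subspace of class functions, $\langle\pi,R_\sigma^{\UU_n}\rangle\ne 0$ for some $\sigma$; as $R_\sigma^{\UU_n}=\pm\pi_\sigma$ is irreducible, this forces $\pi=\pi_\sigma$. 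Second, it is worth recording explicitly that each $R_\sigma^{\UU_n}$ is itself unipotent (not just irreducible up to sign): from the same orthogonality computation, $\langle R_\sigma^{\UU_n},R^{\UU_n}_{T_{w_0},1}\rangle=\sigma(1)=\dim\sigma\ne 0$. Neither issue is a genuine gap; the proof is sound.
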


It is well-known that irreducible representations of $S_n$ are parametrized by partitions of $n$. For a partition $\lambda$ of $n$,  denote by  $\sigma_\lambda$ the corresponding representation of $S_n$, and
let $\pi_\lambda= \pm R_{\sigma_\lambda}^{\UU_n}$ be the corresponding unipotent representation of $\UU_n(\Fq)$.
By Lusztig's result \cite{L},  $\pi_\lambda$ is (up to sign) a unipotent cuspidal representation  of $\UU_n(\Fq)$ if and only if $n=\frac{k(k+1)}{2}$ for some positive integer $k$ and  $\lambda=[k,k-1,\cdots,1]$.

For a character $\chi$ of $G^F$, denote by $\chi^*$ the Alvis-Curtis dual of $\chi$ defined  in e.g. \cite{A, Cu, K}. If $\chi$ is an irreducible character of $G^F$, then $\chi^*$ is (up to sign) an irreducible character of $G^F$ as well.
By \cite[Proposition 9.3.4]{C},
\[
(\varepsilon_G\varepsilon_T R^G_{T,\theta})^*=R^G_{T,\theta},
\]
where $\varepsilon_G=(-1)^{\rm{rk}(G)}$. Thus
\[
(R_\sigma^{\UU_n})^*=\frac{1}{|W_n|}\sum_{w\in W_n}\varepsilon_{\UU_n}\varepsilon_{T_w} \sigma(ww_0)R_{T_w,1}^{\UU_n}.
\]
It is well-known that for a character $\sigma_{\lambda}$ of $S_n$ corresponding to a partition $\lambda$ of $n$, $\sigma_{\lambda}\otimes\rm{sgn}\cong \sigma_{^t\lambda}$, hence up to sign
\begin{equation}\label{dual}
\pi_{\lambda}^*\cong\pi_{^t\lambda}.
\end{equation}

\subsection{Regular characters}

An $F$-stable maximal torus $T$ is said to be minisotropic if $T$ is not contained in any $F$-stable proper parabolic subgroup of $G$. Then a representation $\pi$ of $G^F$ is cuspidal if and only if
\[
\langle \pi,R^G_{T,\theta}\rangle_{G^F}=0
\]
whenever $T$ is not minisotropic, for any character $\theta$ of $T^F$ (see \cite[Theorem 6.25]{S1}). Note that if $G^F=\GGL_n(\Fq)$, then $T$ is said to be minisotropic when $T^F\cong \GGL_1(\mathbb{F}_{q^n})$.

Assume that $\theta\in \widehat{T^F}$, $\theta'\in \widehat{T'^F}$ where $T$, $T'$ are $F$-stable maximal tori. The pairs $(T,\theta)$, $(T',\theta')$ are said to be geometrically conjugate if for some $n\ge 1$, there exists $x \in G^{F^n}$ such that
\[
^xT^{F^n} = T ^{\prime F^n}\ \mathrm{and}\ \ ^x(\theta \circ  N^T_n) = \theta'\circ N^{T'}_n
 \]
 where $N_n^T: T^{F^n}\to T^F$ is the norm map. By \cite[p. 378]{C}, for any geometrically conjugate class $\kappa$, there is a unique regular character $\pi^{reg}_\kappa$ appearing in $R^G_{T,\theta}$ for some $(T,\theta)\in \kappa$; and any regular character appears in exactly one geometric conjugacy class. Moreover
\begin{equation}\label{reg}
\pi^{reg}_\kappa=\sum_{(T,\theta)\in\kappa \ \rm{mod} \ G^F}\frac{\varepsilon_G \varepsilon_T R^G_{T,\theta}}{\langle R^G_{T,\theta},R^G_{T,\theta}\rangle _{G^F}}.
\end{equation}
The above equation implies that $\pi_\kappa^{reg}$ appears in $R^G_{T,\theta}$ for every pair $(T,\theta)\in \kappa$. Thus $\pi_\kappa^{reg}$ is cuspidal if and only if $T$ is minisotropic and $\theta$ is regular for every pair $(T,\theta)\in \kappa$.  Here $\theta$ regular means that
\[
^x\theta=\theta, \ x\in W_G(T)^F \ \mathrm{if \ and \ only\ if\ } x=1.
\]
In particular, if $\tau$ is an irreducible cuspidal representation of $\GGL_n(\Fq)$, then there is a pair $(T,\theta)$ with $T$ an $F$-stable minisotropic maximal torus and $\theta$ regular such that
$
\tau=\pm R_{T,\theta}^G.
$

\subsection{Lusztig correspondence}

Let $G^*$ be the dual group of $G$. We still denote the Frobenius endomorphism of $G^*$ by $F$, and $G^{*F}$ the group of rational points. It is known that there is a bijection between the set of $G^F$-conjugacy classes of $(T, \theta)$ and the set of $G^{*F}$-conjugacy classes of $(T^*, s)$ where $T^*$ is a $F$-stable maximal torus in $G^*$ and $s \in   T^{*F}$ . If $(T, \theta)$ corresponds to $(T^*, s)$, then $R_{T,\theta}^G$ will be also denoted by $R_{T^*,s}^G$.
For a semisimple element $s \in G^{*F}$, define
\[
\mathcal{E}(G^F,s) = \{ \chi \in \mathcal{E}(G^F)  :  \langle \chi, R_{T^*,s}^G\rangle \ne 0\textrm{ for some }T^*\textrm{ containing }s \}.
\]
The set $\mathcal{E}(G^F,s)$ is called  the Lusztig series, and it is known that $\mathcal{E}(G^F)$ is partitioned into
Lusztig series indexed by the conjugacy classes $(s)$ of semisimple elements $s$,
i.e.,
\[
\mathcal{E}(G^F)=\coprod_{(s)}\mathcal{E}(G^F,s).
\]

The following result is fundamental for the classification of $\mathcal{E}(G)$:

\begin{proposition}[Lusztig]\label{Lus}
There is a bijection
\[
\mathcal{L}_s:\mathcal{E}(G^F,s)\to \mathcal{E}(C_{G^{*F}}(s),1),
\]
extended by linearity to a map between virtual characters satisfying that
\[
\mathcal{L}_s(\varepsilon_G R^G_{T^*,s})=\varepsilon_{C_{G^{*F}}(s)} R^{C_{G^{*F}}(s)}_{T^*,1}.
\]
\end{proposition}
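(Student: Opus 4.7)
The plan is to construct $\mathcal{L}_s$ in two stages: first on the $\bb{Q}$-span of the Deligne--Lusztig virtual characters (the ``uniform part''), then, if needed, extend to the full Lusztig series. Throughout write $H = C^\circ_{G^*}(s)$, which for $G = \UU_n$ and $s$ as considered in the paper is a connected reductive group isomorphic to a product of unitary groups with restriction-of-scalars general linear factors.

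First I would observe the Levi-type bijection at the torus level: every $F$-stable maximal torus $T^* \subseteq G^*$ with $s \in T^{*F}$ is automatically an $F$-stable maximal torus of $H$, and conversely every $F$-stable maximal torus of $H$ arises this way. Moreover $G^{*F}$-conjugacy of pairs $(T^*,s)$ matches $H^F$-conjugacy of tori of $H$, since $N_{G^{*F}}(T^*)\cap C_{G^{*F}}(s)=N_{H^F}(T^*)$. This allows one to define
\[
\mathcal{L}_s\bigl(\varepsilon_G R^G_{T^*,s}\bigr)=\varepsilon_H R^H_{T^*,1}
\]
and extend by linearity. The weak orthogonality relation of Proposition \ref{3.2} applied in both $G$ and $H$, combined with the torus correspondence above, shows that this assignment is a well-defined isometry between the subspace spanned by $\{R^G_{T^*,s}\}$ and the subspace spanned by $\{R^H_{T^*,1}\}$. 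The sign normalization $\varepsilon_G$ versus $\varepsilon_H$ is forced by the fact that the relative Weyl groups $W_G(T^*)$ and $W_H(T^*)$ agree, so that the squared norms computed from Proposition \ref{3.2} are equal on both sides.

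For $G=\UU_n$ this is essentially the end of the argument: every irreducible character of $\UU_n\fq$ is uniform (this is recorded in the third bullet before Section \ref{sec2}), so the uniform projection of $\mathcal{E}(\UU_n\fq,s)$ equals $\mathcal{E}(\UU_n\fq,s)$ itself, and similarly for $\mathcal{E}(H^F,1)$. The isometry of the previous step therefore permutes the irreducible constituents, and one needs only to check that it sends $\pm\,\mathcal{E}(\UU_n\fq,s)$ to $\pm\,\mathcal{E}(H^F,1)$. This can be read off from \eqref{reg} and the fact that pairs $(T^*,s)\in\kappa$ for a geometric conjugacy class $\kappa$ in $G^*$ correspond precisely to pairs $(T^*,1)$ in $H^*$ lying in the geometric conjugacy class of the identity.

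The main obstacle, and the step that requires the deepest input, is the analogue for a general connected reductive $G$ where irreducible characters need not be uniform. In that setting the map $\mathcal{L}_s$ is only defined on the uniform part by the construction above, and to lift it to a bijection on irreducible characters one must invoke Lusztig's classification of unipotent characters via families, almost characters and Hecke algebras, together with the compatibility of these structures under the passage from $G$ to $C_{G^*}(s)$. Since the paper only uses the bijection for $G=\UU_n$, the uniform argument suffices here, and the general case is cited from Lusztig's work.
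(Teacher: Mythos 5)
The paper does not prove this proposition; it is stated (with the attribution ``Lusztig'' in brackets) as a known deep theorem --- essentially Lusztig's Jordan decomposition of characters --- and then applied. So there is no internal proof to compare your sketch against, and what you have written is an outline of the proof that appears in the literature, restricted to the uniform setting.

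Two points in your sketch are not quite right. First, you justify the sign normalization by asserting that ``the relative Weyl groups $W_G(T^*)$ and $W_H(T^*)$ agree.'' This is false in general: $W_H(T^*)$, for $H=C^\circ_{G^*}(s)$, is a proper subgroup of $W_G(T^*)$ whenever $s$ is not central. What the weak orthogonality formula of Proposition~\ref{3.2} actually requires, and what is true, is that the stabilizer $W_{T}(\theta)^F$ of $\theta$ in $W_G(T)^F$ (equivalently, the stabilizer of $s$ in $W_{G^*}(T^*)^F$) coincides with $W_H(T^*)^F$. That equality is what makes the assignment $\varepsilon_G R^G_{T^*,s}\mapsto \varepsilon_H R^{H}_{T^*,1}$ an isometry; the full Weyl groups do not match, and if they did the signs would not have to be inserted.

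Second, and more seriously, you have not actually closed the gap between ``isometry of uniform spaces'' and ``bijection of irreducible characters with the stated DL compatibility.'' An isometry sends vectors of norm one to vectors of norm one, so it sends irreducibles to $\pm$ irreducibles; deciding the sign (i.e.\ showing that, with the normalization $\varepsilon_G,\varepsilon_H$ built in, genuine characters go to genuine characters and not their negatives) is exactly the point of the theorem, and it is not supplied by \eqref{reg}. Equation~\eqref{reg} only describes one distinguished constituent, the regular character, and does not by itself control the signs on every member of $\mathcal{E}(G^F,s)$. For $G=\UU_n$ this can be fixed, but it requires an additional input (e.g.\ an explicit description of the unipotent characters and their multiplicities in $R_{T,1}$ as in Theorem~\ref{thm3.3}, applied simultaneously to $G$ and to $H$), which you do not give. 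As you acknowledge at the end, for a general connected reductive $G$ the argument genuinely needs Lusztig's full machinery, so the proposition as stated in the paper cannot be established by the uniform argument alone and must be cited.
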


From now on assume that $G^F=\UU_n\fq$. In this case, $G^{*F}=\UU_n\fq$.
For later use, we prove the following irreducibility result using Lusztig correspondence.

\begin{proposition}\label{irr}
Let $s$ be a semisimple element of $\UU_n\fq$, which is  $\UU_n\fq$-conjugate to $\rm{diag}(s_1,s_2)$ for some semisimple elements $s_1$ and $s_2$ in $\UU_{n_1}\fq$ and $U_{n_2}\fq$ respectively, with $n=n_1+n_2$. Assume that $s_1$ and $s_2$ have no common eigenvalues.  Then for any  $\pi_1\in \mathcal{E}(\UU_{n_1}\fq,s_1)$ and $\pi_2\in\mathcal{E}(\UU_{n_2}\fq,s_2)$, $R^{\UU_n}_{\UU_{n_1}\times\UU_{n_2}}(\pi_1\otimes\pi_2)$ is (up to sign) an irreducible representation. Moreover
\[
R^{\UU_n}_{\UU_{n_1}\times\UU_{n_2}}(\pi_1 \otimes\pi_2) \cong R^{\UU_n}_{\UU_{n_1}\times\UU_{n_2}}(\pi_1'\otimes\pi_2')
\]
if and only if $\pi_1\cong \pi_1'$ and $\pi_2\cong \pi_2'$.
\end{proposition}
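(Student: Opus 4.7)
The plan is to exploit the Lusztig correspondence (Proposition \ref{Lus}) together with transitivity of Deligne--Lusztig induction. Write $L=\UU_{n_1}\times\UU_{n_2}$, and set $C_i=C_{\UU_{n_i}^{*F}}(s_i)$ and $C=C_{G^{*F}}(s)$. The key observation I would establish first is that since $s_1$ and $s_2$ have no common eigenvalues, the centralizer of $s=\mathrm{diag}(s_1,s_2)$ in the dual $\UU_n^*$ splits as a block diagonal product: any matrix commuting with $s$ must preserve the generalized eigenspace decomposition, and the generalized eigenspaces of $s$ are precisely the union (without overlap) of those of $s_1$ and $s_2$. Consequently $C=C_1\times C_2$ as $F$-stable reductive subgroups, and likewise $\varepsilon_{\UU_n}=\varepsilon_{\UU_{n_1}}\varepsilon_{\UU_{n_2}}$, $\varepsilon_C=\varepsilon_{C_1}\varepsilon_{C_2}$.

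Next I would translate the induction of $\pi_1\otimes\pi_2$ into the unipotent world. Expand each $\pi_i$ as a $\mathbb{Z}$-linear combination $\pi_i=\sum_{T} a_{T,i}R^{\UU_{n_i}}_{T,s_i}$ of Deligne--Lusztig characters attached to $F$-stable maximal tori $T$ containing $s_i$ in the dual. By transitivity of Deligne--Lusztig induction (Proposition \ref{3.1}) applied to the Levi $T_1\times T_2\subset L\subset \UU_n$, we get
\[
R^{\UU_n}_{L}(\pi_1\otimes\pi_2)=\sum_{T_1,T_2}a_{T_1,1}a_{T_2,2}\,R^{\UU_n}_{T_1\times T_2,\,s}.
\]
Now apply $\mathcal{L}_s$. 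Using $\mathcal{L}_s(\varepsilon_{\UU_n}R^{\UU_n}_{T_1\times T_2,s})=\varepsilon_C R^{C}_{T_1\times T_2,1}$ together with the fact that for a direct product $R^{C_1\times C_2}_{T_1\times T_2,1}=R^{C_1}_{T_1,1}\otimes R^{C_2}_{T_2,1}$, the sum collapses to
\[
\mathcal{L}_s\bigl(R^{\UU_n}_L(\pi_1\otimes\pi_2)\bigr)=\mathcal{L}_{s_1}(\pi_1)\otimes\mathcal{L}_{s_2}(\pi_2)=:\rho_1\otimes\rho_2,
\]
after the $\varepsilon$-factors telescope. Since $\rho_1\otimes\rho_2$ is a genuine irreducible unipotent character of $C=C_1\times C_2$, and $\mathcal{L}_s$ is a bijection from $\mathcal{E}(\UU_n^F,s)$ onto $\mathcal{E}(C,1)$ extended linearly to virtual characters, the preimage $R^{\UU_n}_L(\pi_1\otimes\pi_2)$ must equal $\pm$ the unique irreducible $\pi:=\mathcal{L}_s^{-1}(\rho_1\otimes\rho_2)\in\mathcal{E}(\UU_n^F,s)$.

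For the second statement, if $R^{\UU_n}_L(\pi_1\otimes\pi_2)\cong R^{\UU_n}_L(\pi_1'\otimes\pi_2')$, then applying $\mathcal{L}_s$ gives $\rho_1\otimes\rho_2=\rho_1'\otimes\rho_2'$ as irreducibles of $C_1\times C_2$, forcing $\rho_i=\rho_i'$ and hence $\pi_i\cong\pi_i'$ by bijectivity of each $\mathcal{L}_{s_i}$. The main potential obstacle is the centralizer decomposition in the first paragraph, which is really the content of the no-common-eigenvalues hypothesis and must be checked carefully both as an algebraic group statement and at the level of $F$-rational points; once this is in hand, the transitivity computation and the compatibility of $\mathcal{L}_s$ with Deligne--Lusztig induction formulated in Proposition \ref{Lus} do all the remaining work.
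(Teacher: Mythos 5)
Your proof is correct and follows essentially the same approach as the paper: decompose the centralizer $C_{\UU_n}(s)$ using the disjoint-eigenvalue hypothesis, expand each $\pi_i$ in Deligne--Lusztig characters, push through transitivity of $R^G_L$ and the Lusztig correspondence to land on $\mathcal{L}_{s_1}(\pi_1)\otimes\mathcal{L}_{s_2}(\pi_2)$, and invoke bijectivity of $\mathcal{L}_s$. You have simply spelled out the $\varepsilon$-factor bookkeeping and the two-step induction which the paper dispatches with ``it is not hard to see.''
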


\begin{proof} By the assumption on $s_1$ and $s_2$, one has
\[
C_{\UU_n}(s)\cong C_{\UU_{n_1}}(s_1)\times C_{\UU_{n_2}}(s_2).
\]
One may write $\pi_i$ as a linear combination of $R^{\UU_{n_i}}_{T_i^*, s_i}$, where $T_i^*$ runs over $\UU_{n_i}\fq$-conjugacy classes of $F$-stable maximal tori of $\UU_{n_i}$ containing $s_i$. From Proposition \ref{Lus}, it is not hard to see that up to sign
\[
\cal{L}_s\left(R^{\UU_n}_{\UU_{n_1}\times\UU_{n_2}}(\pi_1\otimes\pi_2)\right)\cong \cal{L}_{s_1}(\pi_1)\otimes \cal{L}_{s_2}(\pi_2),
\]
which is an irreducible unipotent representation of $C_{\UU_{n_1}}(s_1)\times C_{\UU_{n_2}}(s_2)$. Hence $R^{\UU_n}_{\UU_{n_1}\times\UU_{n_2}}(\pi_1\otimes\pi_2)$ is up to sign an irreducible representation of $\UU_n\fq$. The last assertion of the Proposition is obvious.
\end{proof}

In \cite[Lemma 6.2]{LW} we proved the following useful special case of Proposition \ref{irr}. Put
\begin{equation}
\GG_\ell:=\mathrm{Res}_{\mathbb{F}_{q^2}/\Fq}\GGL_\ell,
\end{equation}
so that $\GG_\ell(\Fq)=\mathrm{GL}_\ell(\mathbb{F}_{q^2})$.  Let $\tau$ be an irreducible cuspidal representation of $\GG_\ell\fq$ which is not conjugate self-dual. Then $R^{\UU_n}_{\GG_\ell\times\UU_{n-2\ell}}(\tau\otimes\pi_\lambda)$ is irreducible for any unipotent representation $\pi_\lambda$ of $\UU_{n-2\ell}\fq$.

\section{Weil representations and see-saw dual pairs}\label{sec3}

Let $\omega_{\rm{Sp}_{2N}}$ be the character of the Weil representation (cf. \cite{Ger}) of the finite symplectic group $\rm{Sp}_{2N}(\Fq)$, which depends on a  nontrivial additive character $\psi$ of $\Fq$.
Let $(G, G^{\prime})$ be a reductive dual pair in $\rm{Sp}_{2N}$, and write
$\omega_{G,G'}$ for the restriction of $\omega_{\rm{Sp}_{2N}}$ to $G^F\times G'^F$. Then it decomposes into a direct sum
\[
\omega_{G,G'}=\bigoplus_{\pi,\pi'} m_{\pi,\pi '}\pi\otimes\pi '
\]
where $\pi$ and $\pi '$ run over irreducible representations of $G^F$ and $G'^F$ respectively, and $m_{\pi,\pi'}$ are nonnegative integers.. We can rearrange this decomposition as
\[
\omega_{G,G'}=\bigoplus_{\pi} \pi\otimes\Theta_{G,G'}(\pi )
\]
 where $\Theta_{G, G'}(\pi ) = \bigoplus_{\pi'} m_{\pi,\pi '}\pi '$ is a (not necessarily irreducible) representation of $G'^F$, called the (big) theta lifting of $\pi$ from $G^F$ to $G'^F$. Write $\pi'\subset \Theta_{G,G'}(\pi)$ if $\pi\otimes\pi'$ occurs in $\omega_{G,G'}$, i.e. $m_{\pi, \pi'}\neq 0$.  We remark that even if $\Theta_{G,G'}(\pi)=:\pi'$ is irreducible, one only has
 \[
 \pi\subset \Theta_{G',G}(\pi'),
 \]
 while the equality does not  necessarily hold.

Consider a dual pair of unitary groups $(G,G')=(\UU_n, \UU_{n'})$ in $\rm{Sp}_{2nn'}$. Denote $\omega_{G, G'}$ by $\omega_{n, n'}$, and $\Theta_{G,G'}$ by $\Theta_{n,n'}$. In particular, we denote by $\omega_n$ the restriction of $\omega_{\rm{Sp}_{2n}}$ to $\rm{U}_n(\Fq)$. By \cite[Theorem 3.5]{AM}, theta lifting between unitary groups sends unipotent representations to unipotent representations, and we will recall the explicit correspondence later.

By  \cite[Lemma 6.2 and Proposition 6.4]{LW}, we have the following compatibility for the theta lifting and parabolic induction.

\begin{proposition} \label{theta}
Let $\tau$ be an irreducible cuspidal representation of $\GG_\ell\fq$ which is not conjugate self-dual, $\pi$ be an irreducible unipotent representation of $\UU_n(\Fq)$, and  $\pi':=\Theta_{n,n'}(\pi)$. Then we have
\[
\Theta_{n+2\ell, n'+2\ell} (R^{\UU_{n+2\ell}}_{\GG_\ell\times \UU_{n}}(\tau \otimes \pi))= R^{\UU_{n'+2\ell}}_{\GG_\ell\times \UU_{n'}}(\tau\otimes \pi').
\]
\end{proposition}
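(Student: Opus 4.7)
The plan is to verify the claimed identity by testing against an arbitrary irreducible representation $\sigma$ of $\UU_{n'+2\ell}(\Fq)$ and checking that the resulting multiplicities agree. By the definition of the big theta lift,
\[
\langle \Theta_{n+2\ell,n'+2\ell}(R^{\UU_{n+2\ell}}_{\GG_\ell\times \UU_n}(\tau\otimes\pi)),\sigma\rangle = \langle \omega_{n+2\ell,n'+2\ell},\ R^{\UU_{n+2\ell}}_{\GG_\ell\times \UU_n}(\tau\otimes\pi)\otimes\sigma\rangle.
\]
Since $\GG_\ell\times \UU_n$ is the Levi of an $F$-stable parabolic $P=LV$ of $\UU_{n+2\ell}$, the functor $R^{\UU_{n+2\ell}}_{\GG_\ell\times \UU_n}$ is ordinary parabolic induction, and Frobenius reciprocity rewrites the above as
\[
\langle (\omega_{n+2\ell,n'+2\ell})_V \otimes \sigma,\ \tau\otimes\pi\rangle_{(\GG_\ell\times \UU_n)\times\UU_{n'+2\ell}(\Fq)},
\]
where $(\omega_{n+2\ell,n'+2\ell})_V$ denotes the Jacquet module along the unipotent radical $V$.

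The central ingredient is the structure of this Jacquet module as a representation of $(\GG_\ell\times \UU_n)\times\UU_{n'+2\ell}(\Fq)$. Using the mixed model of the Weil representation, one obtains a finite-field analog of Kudla's filtration, whose successive quotients are parabolically induced from Levi subgroups $\GG_k\times\UU_{n'+2\ell-2k}$ of $\UU_{n'+2\ell}$ for $0\le k\le \ell$ and involve the smaller Weil representation $\omega_{n,n'+2\ell-2k}$ coupled with a natural $\GG_\ell\times\GG_k$-bimodule coming from the mixed model. Pairing with $\tau$ on the $\GG_\ell$-factor and using cuspidality of $\tau$, all pieces with $k<\ell$ vanish because they would require a further nontrivial Jacquet reduction on the $\GG_\ell$-side. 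On the surviving $k=\ell$ summand, the pair $\GG_\ell\times\GG_\ell$ acts through a sum of bimodules in which $\tau$ could in principle appear paired either with itself or with its conjugate dual; the hypothesis that $\tau$ is not conjugate self-dual precisely rules out the latter, leaving exactly one $\tau$-isotypic component.

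What remains after this extraction is the pairing $\langle \omega_{n,n'},\pi\otimes\sigma_{V'}\rangle$, where $\sigma_{V'}$ is the Jacquet restriction of $\sigma$ along the parabolic of $\UU_{n'+2\ell}$ with Levi $\GG_\ell\times\UU_{n'}$, viewed after extracting the $\tau$-isotypic component on the $\GG_\ell$-factor. By the definition $\pi'=\Theta_{n,n'}(\pi)$ this equals $\langle \pi',\sigma_{V'}|_{\UU_{n'}}\rangle$, and reversing Frobenius reciprocity on the $\UU_{n'+2\ell}$-side recovers $\langle R^{\UU_{n'+2\ell}}_{\GG_\ell\times \UU_{n'}}(\tau\otimes\pi'),\sigma\rangle$, as desired. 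Since $\sigma$ was arbitrary, the two representations of $\UU_{n'+2\ell}(\Fq)$ in the proposition coincide.

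The principal obstacle is the explicit Jacquet-module computation for $\omega_{n+2\ell,n'+2\ell}$ along $V$, i.e.\ the finite-field Kudla filtration, together with the verification that the non-conjugate-self-dual hypothesis on $\tau$ eliminates precisely the conjugate-dual contribution on the top piece. Once these inputs are in place, the remainder of the argument is essentially bookkeeping with Frobenius reciprocity; the irreducibility statement of Proposition \ref{irr} guarantees that the right-hand side is a well-defined irreducible representation, so no further identification is required.
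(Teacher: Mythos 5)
The paper proves this proposition only by citation to \cite[Lemma~6.2, Proposition~6.4]{LW}; judging from the surrounding machinery (notably Proposition~\ref{t2}, quoted from \cite[Theorem~2.6]{AMR}), the cited argument runs through the Lusztig correspondence and the Deligne--Lusztig-theoretic description of the finite-field theta correspondence, not through the mixed model. Your proposal takes a genuinely different, $p$-adic-style route: a finite-field version of Kudla's filtration of the Jacquet module $(\omega_{n+2\ell,n'+2\ell})_V$. Over $\Fq$ all representations are semisimple, so the filtration is in fact a direct-sum decomposition, and the plan is in principle sound. But the argument as written is only an outline, because the one nontrivial ingredient --- the precise form of that decomposition, with the right determinant twist on each graded piece and the right $\GG_\ell\times\GG_\ell$-bimodule in the top term --- is invoked without derivation or citation, and you correctly flag it as ``the principal obstacle.'' Everything after that is bookkeeping, as you say; the bookkeeping cannot, however, establish the proposition until the decomposition is pinned down.

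Two further points of imprecision should be repaired. First, the top ($k=\ell$) piece does not carry ``a sum of bimodules in which $\tau$ could appear paired with itself or with its conjugate dual.'' It carries a twist of the single regular $\GG_\ell\fq\times\GG_\ell\fq$-bimodule, so extracting cuspidal $\tau$ on the $\UU_{n+2\ell}$-side produces one well-determined twist $\tau'$ of $\tau$ (a Galois-contragredient up to a determinant character) on the $\UU_{n'+2\ell}$-side. The non-conjugate-self-dual hypothesis is not used there to choose between two alternatives; its role is to make $R^{\UU_{n+2\ell}}_{\GG_\ell\times\UU_n}(\tau\otimes\pi)$ and $R^{\UU_{n'+2\ell}}_{\GG_\ell\times\UU_{n'}}(\tau\otimes\pi')$ irreducible via Proposition~\ref{irr}. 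Second, you should record explicitly why the twist $\tau'$ that exits the Jacquet-module computation can be replaced by $\tau$ in the final statement, namely that $R^{\UU_{n'+2\ell}}_{\GG_\ell\times\UU_{n'}}(\tau'\otimes\pi')\cong R^{\UU_{n'+2\ell}}_{\GG_\ell\times\UU_{n'}}(\tau\otimes\pi')$ by the Weyl-group element normalizing the Levi; as it stands the write-up silently conflates the two.
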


\subsection{See-saw dual pairs}
Recall the general formalism of see-saw dual pairs. Let $(G, G')$ and $(H, H')$ be two reductive dual pairs in a symplectic group $\rm{Sp}(W)$ such that $H \subset G$ and $G' \subset H'$.  Then there is a see-saw diagram
\[
\setlength{\unitlength}{0.8cm}
\begin{picture}(20,5)
\thicklines
\put(6.8,4){$G$}
\put(6.8,1){$H$}
\put(12.1,4){$H'$}
\put(12,1){$G'$}
\put(7,1.5){\line(0,1){2.1}}
\put(12.3,1.5){\line(0,1){2.1}}
\put(7.5,1.5){\line(2,1){4.2}}
\put(7.5,3.7){\line(2,-1){4.2}}
\end{picture}
\]
and the associated see-saw identity
\[
\langle \Theta_{G',G}(\pi_{G'}),\pi_H\rangle_H =\langle \pi_{G'},\Theta_{H,H'}(\pi_H)\rangle_{G'},
\]
where $\pi_H$ and $\pi_{G'}$ are representations of $H$ and $G'$ respectively.

In our case, if we put
\[
G=\UU_n\times \UU_n,\quad  G'=\UU_n\times \UU_1, \quad H=\UU_n,\ \mathrm{and}\ H'=\UU_{n+1},
\]
then the left-hand side of the see-saw identity concerns the basic case of  Fourier-Jacobi model whereas the right-hand side
concerns the basic case of Bessel model. In general, we need Proposition \ref{theta} and the following result which generalizes \cite[Proposition 5.2]{LW}.

\begin{proposition} \label{7.21}
Let $\pi$ be an irreducible unipotent representation of $\UU_n(\Fq)$, and $\pi'$ be an irreducible representation of $\UU_m(\Fq)$ with $n > m$ but $m \not\equiv n \ \mathrm{mod} \ 2$. Let $P$ be an $F$-stable maximal parabolic subgroup of $\UU_{n+1}$ with Levi factor $\GG_\ell \times \UU_m$ (so that $m + 2\ell = n + 1$). Let $\tau_1$ (resp. $\tau_2)$ be an irreducible cuspidal representations of $\GG_{\ell'}\fq$ (resp. $\GG_{\ell-\ell'}\fq$),  $\ell'\leq \ell$, which is nontrivial if $\ell'=1$ (resp. $\ell-\ell'=1$), and
\[
\tau=R_{\GG_{\ell'}\times  \GG_{\ell- \ell'}}^{\GG_\ell}(\tau_1\times\tau_2).
\]
Then we have
\[
m(\pi, \pi')=\langle \pi\otimes \bar{\nu}, \pi'\rangle _{H(\Fq)}=\langle R^{\UU_{n+1}}_{\GG_\ell \times \UU_m}(\tau\otimes\pi'),\pi\rangle _{\UU_n(\Fq)},
\]
where the data $(H,\nu)$ is given by \cite[(1.2)]{LW}.
\end{proposition}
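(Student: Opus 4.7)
The plan is to bootstrap from the special case treated in \cite[Proposition 5.2]{LW}, which establishes the identity under the assumption that $\tau$ is itself an irreducible non-conjugate-self-dual cuspidal representation of $\GG_\ell\fq$. The present statement extends this by allowing $\tau=R^{\GG_\ell}_{\GG_{\ell'}\times\GG_{\ell-\ell'}}(\tau_1\otimes\tau_2)$ to be the Deligne-Lusztig induction of two smaller cuspidals. The nontriviality hypotheses on $\tau_1,\tau_2$ in the small cases $\ell'=1$ or $\ell-\ell'=1$ are exactly what is needed to keep the Lusztig parameter of $\tau$ away from the conjugate-self-dual locus in the dual $\GG_\ell^{*F}=\GGL_\ell(\mathbb{F}_{q^2})$, thereby ensuring the compatibility with Proposition \ref{theta} and Proposition \ref{irr}.

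The first step is to apply induction in stages (Proposition \ref{3.1}) to rewrite
\[
R^{\UU_{n+1}}_{\GG_\ell\times\UU_m}(\tau\otimes\pi') \;=\; R^{\UU_{n+1}}_{\GG_{\ell'}\times\GG_{\ell-\ell'}\times\UU_m}(\tau_1\otimes\tau_2\otimes\pi'),
\]
reducing the claim to showing
\[
m(\pi,\pi') \;=\; \bigl\langle R^{\UU_{n+1}}_{\GG_{\ell'}\times\GG_{\ell-\ell'}\times\UU_m}(\tau_1\otimes\tau_2\otimes\pi'),\,\pi\bigr\rangle_{\UU_n\fq}.
\]
By the Lusztig correspondence (Proposition \ref{Lus}) together with the irreducibility result Proposition \ref{irr}, each irreducible constituent of the above induction lies in a Lusztig series $\mathcal{E}(\UU_{n+1}\fq,s)$ whose semisimple parameter $s$ is assembled from the Frobenius orbits of eigenvalues attached to the Lusztig data of $\tau_1,\tau_2,\pi'$; by the non-conjugate-self-dual hypothesis, none of these orbits contains the eigenvalue $1$. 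Since $\pi\in\mathcal{E}(\UU_n\fq,1)$, only those pieces whose restriction to $\UU_n$ meets the unipotent Lusztig series can contribute to the inner product.

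The main obstacle I expect is verifying that the right-hand side is insensitive to the specific cuspidal datum $(\tau_1,\tau_2)$ used to build $\tau$, which is what would allow us to substitute a single non-conjugate-self-dual cuspidal $\tau_{\rm cusp}$ of $\GG_\ell\fq$ and invoke the known cuspidal case. My plan for this step is to transfer the inner product via the Lusztig correspondence to one of unipotent characters on a centralizer of the form $\GGL_{\ell'}(\mathbb{F}_{q^2})\times\GGL_{\ell-\ell'}(\mathbb{F}_{q^2})\times C_{\UU_m^*}(s')$, where the two $\GG$-factors contribute only a rank datum and the whole dependence on the specific $\tau_i$ disappears. Combining this independence with the cuspidal case of \cite[Proposition 5.2]{LW} then pins down the common value as $m(\pi,\pi')$, completing the proof.
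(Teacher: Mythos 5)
Your proposal takes a genuinely different route from the paper's, and it has a gap in the step you yourself flag as ``the main obstacle.'' The paper's proof follows \cite[Theorem 15.1]{GGP1} directly: it builds the hermitian spaces $W\subset V\subset W'$, the parabolic $P(Y)$, the mirabolic subgroup $R\subset\GGL(Y)$, and then applies the Bernstein--Zelevinsky decomposition (\cite[Corollary 4.3]{GGP2})
\[
\tau|_R=\mathrm{Ind}^R_U\chi+\mathrm{Ind}^R_{R^\ell_{\ell'}}(\tau_1\otimes\psi_{\ell-\ell'})+\mathrm{Ind}^R_{R^\ell_{\ell-\ell'}}(\tau_2\otimes\psi_{\ell'}).
\]
The two correction terms, pushed forward to $\UU(V)$, sit inside parabolic inductions from Levi subgroups of $\UU_n$ whose $\GG$-factor carries the non-unipotent cuspidal $\tau_1$ (resp.\ $\tau_2$); since $\pi$ is unipotent, these contribute zero, and only the non-degenerate Whittaker term $\mathrm{Ind}^R_U\chi$ survives, giving the identity. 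This is elementary Harish-Chandra theory on the $\UU_n$ side and requires no control of Lusztig series or Deligne--Lusztig pairings.

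Your route is instead to reduce to the cuspidal case \cite[Proposition 5.2]{LW} by showing the right-hand side is insensitive to the cuspidal datum $(\tau_1,\tau_2)$. The logic is sound in outline, but the independence step is not secured by what you write. The Lusztig correspondence $\mathcal{L}_s$ is a bijection $\mathcal{E}(\UU_{n+1}\fq,s)\to\mathcal{E}(C_{\UU_{n+1}^{*F}}(s),1)$; it does not commute with restriction to the subgroup $\UU_n\fq$ and by itself gives no handle on the inner product $\langle R^{\UU_{n+1}}_{\GG_\ell\times\UU_m}(\tau\otimes\pi'),\pi\rangle_{\UU_n\fq}$, which is a branching problem across groups of different ranks. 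The tool that actually delivers the independence you need is Reeder's multiplicity formula in the form of Proposition \ref{proposition:A} and Corollary \ref{5c1}: after expanding $\tau\otimes\pi'$ into Deligne--Lusztig characters, the pairing with a unipotent character of $\UU_n\fq$ depends only on the ``unipotent torus factor,'' and even then only up to sign factors $\varepsilon_T$ that must be tracked explicitly. That machinery is developed later in the paper for Theorem \ref{1.2}; your plan would have to import and apply it here, which is a much heavier input than ``the Lusztig correspondence.'' A secondary imprecision: the hypothesis in the statement is nontriviality of $\tau_i$ when its degree is $1$, which is exactly what keeps the eigenvalue $1$ out of the Lusztig parameter; it does not keep the parameter out of the conjugate-self-dual locus (nontrivial degree-$1$ characters can perfectly well be conjugate-self-dual), so your appeal to ``the non-conjugate-self-dual locus'' and hence to Proposition \ref{irr} is not justified as stated.
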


\begin{proof}
It can be proved in the same way as \cite[Theorem 15.1]{GGP1}, where it was established for non-archimedean local fields, and the proof works for finite fields as well. We follow the notations in \cite{GGP1}. Let $V$ be an $n$-dimensional non-degenerate hermitian space and $W \subset V$ be an $m$-dimensional non-degenerate hermitian subspace, so that
\[
W^\bot=X+X^\vee+E.
\]
where $E = \bb{F}_{q^2} \cdot e$ is an anisotropic line and $X$ is an isotropic subspace with $\rm{dim} X = \ell-1$ and $X^\vee$ is the dual of $X$. Let
\[
E^- = \bb{F}_{q^2} \cdot f
\]
denote the rank 1 space equipped with a form which is the negative of that on $E$, so that $E + E^-$ is a split rank 2 space. The two isotropic lines in $E + E^-$  are spanned by
\[
v = e + f \textrm{ and } v' =\frac{1}{2\langle e,e\rangle} (e - f).
\]
Now consider the space
\[
W' = V \oplus E^-
\]
which contains $V$ with codimension 1 and isotropic subspaces
\[
Y = X +\bb{F}_{q^2}\cdot v \quad \textrm{and}\quad Y ^\vee= X^\vee +\bb{F}_{q^2}\cdot v'.
\]
Hence we have
\[
W' = Y + Y^\vee + W.
\]
Let $P = P (Y )$ be the parabolic subgroup of $U(W')$ stabilizing $Y $ and let $M(Y)$ be its Levi subgroup stabilizing $Y$ and $Y^\vee$. Then $\UU(V)=\UU_n$, $\UU(W')=\UU_{n+1}$ and $M(Y)=\GG_\ell \times \UU_m$. Let $P_V (X)$ be the parabolic subgroup of $\UU(V)$ stabilizing $X$, so that
\[
P_V(X) = M_V(X) \ltimes N_V(X)
\]
where $N_V(X)$ is the unipotent radical of $P_V(X)$. Let $Q$ be a subgroup of $P_V (X)$ and
\[
Q=(\GGL(X)\times \UU(W))\ltimes N_V(X).
\]
As in the proof \cite[Theorem 15.1]{GGP1}, one has
\[
\xymatrix
{
0  \ar[r] & N(Y)  \ar[r]  & P(Y)  \ar[r]  & \GGL(Y)\times \UU(W) \ar[r] & 0\\
0  \ar[r] & N(Y)\cap Q  \ar[r] \ar[u]  &Q  \ar[r] \ar[u] & R\times \UU(W) \ar[u] \ar[r] & 0
}
\]
where
$
R\subset \GGL(Y)
$
is the mirabolic subgroup which stabilizes the subspace $X \subset Y$ and fixes $v$ modulo $X$. Note also that $N(Y ) \cap Q \subset N_V(X)$ and
\[
N_V (X)/(N(Y) \cap Q)  \cong \rm{Hom}(E, X).
\]
As a consequence, one has
\[
(\tau\otimes\pi')|_Q=\tau|_R\otimes \pi'.
\]
By the proof of \cite[Theorem 15.1]{GGP1}, it suffices to show that
\[
\langle\pi,\rm{Ind}^{\UU(V)}_{Q}(\tau|_R\otimes \pi')\rangle_{\UU(V)}=\langle\pi,\rm{Ind}^{\UU(V)}_{Q}(\rm{Ind}^R_U\chi\otimes \pi')\rangle_{\UU(V)}.
\]

Let $N_n$ be the group of upper triangular unipotent matrices in $\GG_n\fq=\GGL_n(\bb{F}_{q^2})$. We fix a nontrivial character $\psi_0$ of $\bb{F}_{q^2}$ and let $\psi_n$ be the character of $N_n$, given by
\[
\psi_n(u) = \psi_0(u_{1,2} + u_{2,3} + \ldots + u_{n-1,n}).
\]
Let $R^n_i=\GGL_{i}(\bb{F}_{q^2})\times V_{n-i}$ be the subgroup of $\GGL_n(\bb{F}_{q^2})$ consisting of
\[
 \left(\begin{matrix}
   g      & v  \\
   0      & z
  \end{matrix}\right)
 \]
 with $g\in \GGL_{i}(\bb{F}_{q^2})$, $v\in M_{i\times n-i}$, $z\in N_{n-i}$.

By the theory of Bernstein-Zelevinsky derivatives (c.f. \cite[Corollary 4.3]{GGP2}),
\[
\tau|_R=\rm{Ind}^R_U\chi+\rm{Ind}^R_{R^\ell_{\ell'}}\tau_1\otimes\psi_{\ell-\ell'}+\rm{Ind}^R_{R^\ell_{\ell-\ell'}}\tau_2\otimes\psi_{\ell'}.
\]
Let $Q'$ be the subgroup of $Q$ given by
\[
Q'=(R^{\ell-1}_{\ell'}\times \UU(W))\ltimes (N(Y) \cap Q).
\]
Then there is an $F$-stable maximal parabolic subgroup $P_{\ell'}$ of $\UU_{n}$ with Levi factor $\GG_{\ell'} \times \UU_{n-2\ell'}$ such that $Q'\subset P_{\ell'}$. Thus we get
\[
\begin{aligned}
& \langle\pi,\rm{Ind}^{\UU(V)}_{Q}(\rm{Ind}^R_{R_{\ell'}}\tau_1\otimes\psi_{\ell-\ell'}\otimes \pi')\rangle_{\UU(V)}\\
=&\langle\pi,\rm{Ind}^{\UU(V)}_{Q'}(\tau_1\otimes\psi_{\ell-\ell'}\otimes \pi')\rangle_{\UU(V)}\\
=&\langle\pi,I^{\UU(V)}_{P_{\ell'}}(\tau_1\otimes\rm{Ind}^{\UU_{n-2\ell'}}_{\UU_{n-2\ell'}\cap Q'}(\psi_{\ell-\ell'}\otimes \pi'))\rangle_{\UU(V)}
\end{aligned}
\]
By our assumption, $\pi$ is unipotent and $\tau_1$ is not, hence
\[
\langle\pi,I^{\UU(V)}_{P_{\ell'}}(\tau_1\otimes\rm{Ind}^{\UU_{n-2\ell'}}_{\UU_{n-2\ell'}\cap Q'}(\psi_{\ell-\ell'}\otimes \pi'))\rangle_{\UU(V)}=0.
\]
In the same manner, one has
\[
\langle\pi,\rm{Ind}^{\UU(V)}_{Q}(\rm{Ind}^R_{R^\ell_{\ell-\ell'}}\tau_2\otimes\psi_{\ell'}\otimes \pi')\rangle_{\UU(V)}=0.
\]
It follows that
\[
\begin{aligned}
& \langle\pi,\rm{Ind}^{\UU(V)}_{Q}(\tau|_R\otimes \pi')\rangle_{\UU(V)}\\
=&\langle\pi,\rm{Ind}^{\UU(V)}_{Q}(\rm{Ind}^R_U\chi+\rm{Ind}^R_{R^\ell_{\ell'}}\tau_1\otimes\psi_{\ell-\ell'}+\rm{Ind}^R_{R^\ell_{\ell-\ell'}}\tau_2\otimes\psi_{\ell'})\otimes \pi'\rangle_{\UU(V)}\\
=&\langle\pi,\rm{Ind}^{\UU(V)}_{Q}(\rm{Ind}^R_U\chi\otimes \pi')\rangle_{\UU(V)},
\end{aligned}
\]
which completes the proof.
\end{proof}

Similarly, in the Fourier-Jacobi case we have the following result, which generalizes \cite[Proposition 6.5]{LW}.

\begin{proposition}\label{7.31}
Let $\pi$ be an irreducible unipotent representation of $\UU_n(\Fq)$, and $\pi'$ be an irreducible representation of $\UU_m(\Fq)$ with $n > m$ and $m \equiv n \ \mathrm{mod}  \ 2$. Let $P$ be an $F$-stable maximal parabolic subgroup of $\UU_m$ with Levi factor $\GG_\ell \times \UU_m$ (so that $m + 2\ell = n $). Let $\tau_1$ (resp. $\tau_2)$ be an irreducible cuspidal representations of $\GG_{\ell'}\fq)$ (resp. $\GG_{\ell-\ell'}\fq$), $\ell'\leq\ell$, which is nontrivial if $\ell'=1$ (resp. $\ell-\ell'=1$), and
\[
\tau=R_{\GG_{\ell'}\times  \GG_{\ell- \ell'}}^{\GG_\ell}(\tau_1\times\tau_2).
\]
Then we have
\[
m(\pi, \pi')=\langle \pi\otimes\bar{\nu},\pi'\rangle_{H(\Fq)}=\langle  \pi\otimes \omega_n, R_{\GG_\ell \times \UU_m}^{\UU_{n}}(\tau\otimes\pi')\rangle _{\UU_{n}(\Fq)},
\]
where the data $(H,\nu)$ is given by \cite[(1.6)]{LW}.
\end{proposition}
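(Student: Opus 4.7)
The plan is to follow the proof of Proposition \ref{7.21} essentially verbatim, adapting the geometric setup to the even-codimension (Fourier--Jacobi) situation. The underlying template is the proof of \cite[Theorem 16.1]{GGP1} in the $p$-adic case, which goes through for finite fields after the same modifications as in the proof of Proposition \ref{7.21}, and which specializes Proposition 6.5 of \cite{LW} from the single cuspidal case to the induced $\tau = R^{\GG_\ell}_{\GG_{\ell'}\times\GG_{\ell-\ell'}}(\tau_1\times\tau_2)$.

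First I would fix the geometric data. Since $n - m = 2\ell$, take $V$ an $n$-dimensional Hermitian space and $W \subset V$ a non-degenerate $m$-dimensional Hermitian subspace, with a polarization $W^\perp = Y + Y^\vee$ into totally isotropic subspaces of dimension $\ell$. Let $P(Y) = M(Y)\ltimes N(Y)$ be the parabolic of $\UU(V) = \UU_n$ stabilizing $Y$, so that $M(Y) \cong \GG_\ell \times \UU(W)$ and $R^{\UU_n}_{\GG_\ell\times\UU_m}(\tau\otimes\pi')$ equals $\operatorname{Ind}_{P(Y)^F}^{\UU(V)^F}(\tau\otimes\pi')$ since $P(Y)$ is $F$-stable. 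Choose $X \subset Y$ of codimension $0$ (no anisotropic line is needed in this even case) and let $Q = (\GGL(X)\times \UU(W))\ltimes N_V(X)$ be the mirabolic-type subgroup of the parabolic $P_V(X)$ of $\UU(V)$, arranged as in the proof of Proposition \ref{7.21}.

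Next, by Frobenius reciprocity together with the analogous commutative diagram
\[
\xymatrix@R=4mm@C=6mm{
0 \ar[r] & N(Y) \ar[r] & P(Y) \ar[r] & \GGL(Y)\times \UU(W) \ar[r] & 0 \\
0 \ar[r] & N(Y)\cap Q \ar[r]\ar[u] & Q \ar[r]\ar[u] & R\times \UU(W) \ar[r]\ar[u] & 0 }
\]
with $R \subset \GGL(Y)$ the mirabolic fixing $v$ modulo $X$, the inner product $\langle \pi\otimes\omega_n,\, R^{\UU_n}_{\GG_\ell\times\UU_m}(\tau\otimes\pi')\rangle_{\UU_n\fq}$ is transformed into $\langle \pi,\, \operatorname{Ind}^{\UU(V)}_{Q}(\tau|_R\otimes \pi')\rangle_{\UU(V)}$, where the restriction of $\omega_n$ to the Jacobi subgroup $\UU(W)\ltimes \mathcal{H}$ is absorbed into the Heisenberg--Weil factor built into the definition of $(H,\nu)$ in \cite[(1.6)]{LW}; matching this absorption against the definition of $m(\pi,\pi')$ identifies the leading term with $m(\pi,\pi')$. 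Now apply the Bernstein--Zelevinsky derivative decomposition
\[
\tau|_R = \operatorname{Ind}^R_U\chi + \operatorname{Ind}^R_{R^\ell_{\ell'}}(\tau_1\otimes\psi_{\ell-\ell'}) + \operatorname{Ind}^R_{R^\ell_{\ell-\ell'}}(\tau_2\otimes\psi_{\ell'})
\]
from \cite[Corollary 4.3]{GGP2}.

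Finally, I would show that the last two terms contribute zero. Taking the $\tau_1$-term, its induction to $\UU(V)$ through $Q$ factors through an $F$-stable parabolic $P_{\ell'}$ with Levi $\GG_{\ell'}\times \UU_{n-2\ell'}$, producing a representation of the form $R^{\UU_n}_{\GG_{\ell'}\times\UU_{n-2\ell'}}(\tau_1\otimes(\cdots))$. Since $\tau_1$ is cuspidal and nontrivial, it is not a unipotent representation; by the orthogonality of Lusztig series and the compatibility of $R^{\UU_n}_L$ with Lusztig series, this is orthogonal to the unipotent $\pi$, so the pairing vanishes. The $\tau_2$-term is handled identically. This leaves only the $\operatorname{Ind}^R_U\chi$ term, giving $m(\pi,\pi')$. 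The main obstacle I anticipate is Step~2, namely verifying that the Weil factor $\omega_n$ restricts compatibly with the Jacobi structure on $Q$ so that after Frobenius reciprocity one recovers exactly the twist $\bar\nu$ of the Fourier--Jacobi model; once this bookkeeping is done the rest is a transcription of the Bessel argument.
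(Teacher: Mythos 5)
The paper gives no written proof of Proposition \ref{7.31}: it states the result and appeals to the analogy with Proposition \ref{7.21}, referring the reader to \cite[Theorem 16.1]{GGP1} and \cite[Proposition 6.5]{LW}. Your proposal correctly identifies this intended route, and the skeleton (set up $W^\perp = Y + Y^\vee$ with $\dim Y = \ell$; use the mirabolic $R\subset \GGL(Y)$; invoke \cite[Corollary 4.3]{GGP2} to decompose $\tau|_R$ into its Bernstein--Zelevinsky pieces; kill the $\tau_1$- and $\tau_2$-terms by a cuspidality/unipotency argument) is exactly what the paper has in mind.

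There are, however, two places where your writeup is not yet a proof. First, and as you yourself flag, the step of ``absorbing'' $\omega_n$ into the Jacobi data and recovering $m(\pi,\pi')$ from the $\operatorname{Ind}^R_U\chi$-term is precisely the content that distinguishes the Fourier--Jacobi case from the Bessel case, and it cannot simply be declared. The correct mechanism is the projection formula
\[
\pi\otimes\omega_n \;\hookrightarrow\; \operatorname{Hom}\bigl(\,\cdot\,, \operatorname{Ind}_{P(Y)}^{\UU(V)}(\tau\otimes\pi')\bigr) \ \Longleftrightarrow\ \operatorname{Ind}_{P(Y)}^{\UU(V)}\bigl(\tau\otimes\pi'\otimes(\bar\omega_n|_{P(Y)})\bigr),
\]
followed by the known filtration of $\omega_n|_{P(Y)}$ whose top quotient is (a twist of) $\omega_m$ for $\UU(W)$ and whose subrepresentation is compactly induced from a smaller group; this filtration is the Fourier--Jacobi analogue of the $E^-$-construction in the Bessel case. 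Without this, there is no identification of the remaining term with $\langle \pi\otimes\bar\nu,\pi'\rangle_{H(\Fq)}$. Your parenthetical ``$X\subset Y$ of codimension $0$'' only papers over the fact that you dropped the structure that did this work in the odd case, without replacing it.

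Second, your vanishing argument for the $\tau_1$- and $\tau_2$-terms is stated as if the pairing were $\langle\pi, R^{\UU_n}_{\GG_{\ell'}\times\UU_{n-2\ell'}}(\tau_1\otimes\cdots)\rangle$, so that orthogonality of Lusztig series applies directly. But here the pairing is against $\pi\otimes\omega_n$, and $\omega_n$ is not unipotent, so you cannot conclude vanishing from unipotency of $\pi$ alone. You must first move $\omega_n$ across using the projection formula (so that the $\GG_{\ell'}$-factor is twisted by the $\GGL(Y)$-part of $\omega_n|_{P_{\ell'}}$, which is a nontrivial determinant twist), or otherwise argue that the twisted inducing datum on $\GG_{\ell'}$ remains non-unipotent; only then does the Lusztig-series disjointness kill the term. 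This is a small repair, but as written the step is not justified.
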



In summary, to determine $m(\pi, \pi')$ it suffices to calculate   $\langle R^{\UU_{n+1}}_{\GG_\ell \times \UU_m}(\tau\otimes\pi'),\pi\rangle _{\UU_n(\Fq)}$ or $\langle  \pi\otimes \omega_n, R_{\GG_\ell \times \UU_m}^{\UU_{n}}(\tau\otimes\pi')\rangle _{\UU_{n}(\Fq)}$, which will be done by see-saw arguments and induction on $n$.

\subsection{Theta correspondence of unipotent representations}\label{ssec3.2}
Let us recall the theta correspondence between unipotent representations of finite unitary groups. 
We say that two partitions $\mu=[\mu_i]$ and $\mu'=[\mu_i']$ are {\sl close} if  $|\mu_i-\mu_i'|\le1$ for every $i$, and that $\mu$ is {\sl even} if $\#\{i|\mu_i=j\}$ is even for any $j>0$, i.e. every part of $\mu$ occurs with even multiplicities. Let
\[
\mu\cap\mu'=[\mu_i]_{\{ i| \mu_i=\mu_i'\}}
\]
be the partition formed by the common parts of $\mu$ and $\mu'$. Following \cite{AMR}, we say that $\mu$ and $\mu'$ are {\sl 2-transverse} if they are close and $\mu\cap \mu'$ is even.
In particular, if $\mu$ and $\mu'$ are close and $\mu\cap \mu'=\emptyset$, then $\mu$ and $\mu'$ are 2-transverse, and in this case we say that they are {\sl transverse}. For example, let
$\lambda=[\lambda_1,\ldots, \lambda_k]$ be a partition of $n$, and let
\[
\lambda_*=[\lambda_2,\ldots, \lambda_k]
\]
be the partition of $n-\lambda_1$ obtained by removing the first row of $\lambda$. Then ${}^t\lambda$ and ${}^t\lambda_*$ are transverse. Moreover, $\lambda_*$ is the unique partition of $n-\lambda_1$ such that ${}^t\lambda$ and ${}^t\lambda_*$ are 2-transverse.

For  partitions $\lambda$ and $\lambda'$ of $n$ and $n'$ respectively, denote the multiplicity of $\pi_{\lambda}\otimes\pi_{\lambda'}$ in $\omega_{n,n'}$ by $m_{\lambda,\lambda'}$. By \cite{AMR} Theorem 4.3, Lemma 5.3 and Lemma 5.4, we have

\begin{proposition}\label{u1}
With above notations,
\[
m_{\lambda,\lambda'}=\left\{
\begin{array}{ll}
1, & \textrm{if  }{}^t\lambda\textrm{ and }{}^t\lambda'\textrm{ are }2\textrm{-transverse},\\
0, & \textrm{otherwise.}
\end{array}
\right.
\]
In other words,
\[
\Theta_{n,n'}(\pi_{\lambda})=\bigoplus_{\mbox{\tiny$\begin{array}{c}{}^t\lambda\ \mathrm{and}\ {}^t\lambda'\mathrm{\ are\ }2\textrm{-}\mathrm{transverse}\\
|\lambda'|=n' \end{array}$}}\pi_{\lambda'}
\]
\end{proposition}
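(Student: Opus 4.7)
The plan is to follow the approach of \cite{AMR}. The starting point is Lusztig's parameterization of unipotent representations of $\UU_n\fq$ by symbols (equivalently, by partitions of $n$, via Theorem \ref{thm3.3}), together with the fact that in the unitary case every unipotent character is uniform. Since the unipotent projection of $\omega_{n,n'}$ is a virtual combination of Deligne-Lusztig characters $R^{\UU_n}_{T,1}\otimes R^{\UU_{n'}}_{T',1}$, the computation of $m_{\lambda,\lambda'}=\langle \pi_\lambda\otimes\pi_{\lambda'},\omega_{n,n'}\rangle$ reduces, by Proposition \ref{3.2}, to a purely combinatorial question about inner products in the space of class functions supported on tori.

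The first main step is to obtain an explicit expression for the unipotent part of $\omega_{n,n'}$ in terms of the $R_{T,1}$'s. This uses the classical formula for the character of the Weil representation $\omega_{\rm{Sp}_{2N}}$ on a semisimple element, specialized to pairs $(s,s')\in \UU_n\fq\times \UU_{n'}\fq$ embedded commutatively into $\rm{Sp}_{2nn'}(\Fq)$. The upshot is a formula of the shape
\[
(\omega_{n,n'})_{\mathrm{unip}}=\sum_{(T,T')} c_{T,T'}\, R^{\UU_n}_{T,1}\otimes R^{\UU_{n'}}_{T',1},
\]
where the sum is over $\UU_n\fq\times\UU_{n'}\fq$-conjugacy classes of pairs of $F$-stable maximal tori, and the coefficients $c_{T,T'}$ are explicit signs determined by the types of $T$ and $T'$ viewed as products of norm-one tori in extensions of $\Fq$.

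The second step is to combine this with the expansion of $\pi_\lambda$ given by Theorem \ref{thm3.3} and use the orthogonality formula of Proposition \ref{3.2} to obtain a character-theoretic formula for $m_{\lambda,\lambda'}$ as a sum indexed by Weyl group elements. Via the standard translation between $F$-stable maximal tori of $\UU_n$ and conjugacy classes in $W_n\cong S_n$, this sum simplifies to an inner product in the group algebra of $S_n\times S_{n'}$ between $\sigma_\lambda\otimes \sigma_{\lambda'}$ and a distinguished class function $f_{n,n'}$ extracted from the Weil character. One then identifies $f_{n,n'}$ (up to tensoring with the sign character) with the character of a permutation module whose isotypic decomposition can be read off directly from the combinatorics of symbols.

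The final step is the combinatorial translation. In the symbol language natural to Howe duality, the pairs $(\lambda,\lambda')$ that actually appear are precisely those for which the symbols differ in a prescribed way ("adding one entry"), and this condition is exactly the 2-transversality of close partitions. The transposition ${}^t\lambda,{}^t\lambda'$ in the statement arises from the twist by $\mathrm{sgn}$ in the passage from the symmetric-group side to the Deligne-Lusztig side, consistent with the Alvis-Curtis duality identity \eqref{dual}. The main obstacle I expect is this last combinatorial matching: carefully tracking the signs $c_{T,T'}$ and the sign twist introduced by $w_0$ in Theorem \ref{thm3.3}, so that the vanishing/nonvanishing and multiplicity-one assertion come out as stated with the transposes on the correct side. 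Once this bookkeeping is done, multiplicity-one follows because close-and-even partitions form a multiplicity-free family under the relevant branching, which is exactly the content of \cite[Theorem 4.3, Lemmas 5.3 and 5.4]{AMR}.
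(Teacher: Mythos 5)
The paper does not give an independent proof of this proposition: it simply cites \cite[Theorem 4.3, Lemma 5.3, Lemma 5.4]{AMR}. Your proposal ultimately does the same thing---your final sentence defers the multiplicity-one and vanishing statements to exactly those AMR results---so in substance you and the paper take the same route, namely deferring to AMR. The intervening outline (expand the unipotent part of $\omega_{n,n'}$ into $R^{\UU_n}_{T,1}\otimes R^{\UU_{n'}}_{T',1}$ via the Weil character formula, use uniformity of unipotent characters of $\UU_n\fq$ and Proposition \ref{3.2} to reduce to an inner product over $S_n\times S_{n'}$, then do the symbol combinatorics) is a fair description of how the AMR argument goes, but it is exposition rather than proof: the sign bookkeeping and transpose matching you yourself flag as the ``main obstacle'' is precisely the non-trivial content, and your sketch leaves it open. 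Since the paper offers only the citation, there is nothing further to reconcile; if you intended a self-contained proof, the step that remains is to actually carry out the matching of $c_{T,T'}$, the $w_0$-twist in Theorem \ref{thm3.3}, and the symbol-to-partition dictionary, which is where all the work in \cite{AMR} lives.
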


\begin{corollary}\label{CU1}
Let $\lambda=[\lambda_1,\lambda_2,\ldots,\lambda_k]$ be a partition of $n$. Then the following hold.

(i) If $n'<n-\lambda_1$, then $\Theta_{n,n'}(\pi_{\lambda})=0$.

(ii) If $n'=n-\lambda_1$, then $\Theta_{n,n'}(\pi_{\lambda})=\pi_{\lambda_*}$ with $\lambda_*=[\lambda_2,\ldots,\lambda_k]$.
\end{corollary}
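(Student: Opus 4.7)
The plan is to deduce this directly from Proposition \ref{u1}, which describes $\Theta_{n,n'}(\pi_\lambda)$ as the sum of $\pi_{\lambda'}$ over partitions $\lambda'$ of $n'$ such that ${}^t\lambda$ and ${}^t\lambda'$ are $2$-transverse. Both assertions then become purely combinatorial: (i) asks that no such $\lambda'$ exists when $n' < n - \lambda_1$, while (ii) asks that $\lambda_*$ is the unique such $\lambda'$ when $n' = n - \lambda_1$.

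For (i), I would use only the closeness half of $2$-transversality. Observe that ${}^t\lambda$ has exactly $\lambda_1$ nonzero parts, namely the column lengths $c_i := \#\{j : \lambda_j \ge i\}$ for $i = 1, \ldots, \lambda_1$, and $\sum_{i=1}^{\lambda_1} c_i = n$. Closeness of ${}^t\lambda$ and ${}^t\lambda'$ forces ${}^t\lambda_i' \ge c_i - 1$ for each $i \le \lambda_1$, and ${}^t\lambda_i' \in \{0,1\}$ for $i > \lambda_1$. Summing these inequalities gives
\[
n' = |{}^t\lambda'| \ge \sum_{i=1}^{\lambda_1}(c_i - 1) = n - \lambda_1,
\]
which proves (i).

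For (ii), the equality $n' = n - \lambda_1$ forces every lower bound above to be saturated: ${}^t\lambda_i' = c_i - 1$ for $i \le \lambda_1$ and ${}^t\lambda_i' = 0$ for $i > \lambda_1$. This uniquely determines $\lambda'$, and since removing the first row of $\lambda$ decrements each of the $\lambda_1$ column lengths by exactly one, the resulting transpose is precisely ${}^t\lambda_*$, so $\lambda' = \lambda_*$. It remains only to verify that ${}^t\lambda$ and ${}^t\lambda_*$ are indeed $2$-transverse, not merely close; but since they differ by exactly $1$ at every index where either is nonzero, the partition ${}^t\lambda \cap {}^t\lambda_*$ is empty, hence trivially even. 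Proposition \ref{u1} then contributes multiplicity one, giving $\Theta_{n,n-\lambda_1}(\pi_\lambda) = \pi_{\lambda_*}$.

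I do not anticipate a genuine obstacle: the whole argument reduces to a one-line bookkeeping under the closeness inequality, together with the easy observation that ${}^t\lambda$ and ${}^t\lambda_*$ are in fact transverse. The only point requiring a moment of care is the tail ${}^t\lambda_i'$ for $i > \lambda_1$, where entries equal to $1$ are permitted in general by closeness and could a priori contribute to $n'$; these contributions are killed precisely in the equality case, which is what pins down uniqueness in part (ii).
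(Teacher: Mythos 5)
Your argument is correct and follows essentially the same route as the paper: both derive the inequality $n' = |\lambda'| \ge \sum_{i=1}^{\lambda_1}({}^t\lambda_i - 1) = n-\lambda_1$ from the closeness half of $2$-transversality and then read off (i) and (ii) as the strict and equality cases. You spell out the equality analysis and the transversality check for $({}^t\lambda, {}^t\lambda_*)$ in more detail than the paper does, but the mathematical content is identical.
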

\begin{proof}
If $\pi_\mu\in \Theta_{n,n'}(\pi_\lambda)$, then by Proposition \ref{u1}, $^t\mu$ and $^t\lambda$ are close, which implies that
\[
^t\mu_{i}\ge {^t\lambda_i-1}\quad \mathrm{for}\quad i=1,2\ldots,\lambda_1.
\]
It follows that
\[
|\mu|\ge\sum_{i=1}^{\lambda_1}{^t\mu_{i}}\ge\sum_{i=1}^{\lambda_1}(^t\lambda_i-1)=n-\lambda_1,
\]
and therefore
\[
\Theta_{n,n'}(\pi_\lambda)=\left\{
\begin{array}{ll}
0, &  \textrm{if } n'<n-\lambda_1,\\
\pi_{\lambda_*}, &  \textrm{if } n'=n-\lambda_1.
\end{array}\right.
\]
\end{proof}

\begin{corollary}\label{CU2}
With above notations, if $n'\ge n+\lambda_1-1 $ and $\pi_\mu\subset\Theta_{n,n'}(\pi_\lambda)$, then $\mu_1\ge \lambda_1$.
\end{corollary}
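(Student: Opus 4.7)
The plan is to reduce the claim to a combinatorial statement about the transposed partitions $^t\mu$ and $^t\lambda$, and then prove it by contradiction via a careful accounting of $n'-n$.

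First, by Proposition \ref{u1}, the hypothesis $\pi_\mu \subset \Theta_{n,n'}(\pi_\lambda)$ implies that $^t\mu$ and $^t\lambda$ are 2-transverse, so in particular they are close, meaning $|{}^t\mu_i - {}^t\lambda_i| \le 1$ for every $i \ge 1$. Next I would observe the elementary identity that, for any partition $\nu$, the largest part $\nu_1$ equals the length of $^t\nu$, so the desired inequality $\mu_1 \ge \lambda_1$ is equivalent to ${}^t\mu_{\lambda_1} \ge 1$.

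Suppose for contradiction that ${}^t\mu_{\lambda_1} = 0$, and let $j$ be the smallest index with ${}^t\mu_j = 0$; by assumption $1 \le j \le \lambda_1$. Since $^t\mu$ is weakly decreasing, ${}^t\mu_i = 0$ for every $i \ge j$. Closeness then forces ${}^t\lambda_i \le 1$ for $i \ge j$; and since $i \le \lambda_1$ gives ${}^t\lambda_i \ge 1$ by definition of $\lambda_1$, we get ${}^t\lambda_i = 1$ exactly for $j \le i \le \lambda_1$, while ${}^t\lambda_i = 0$ for $i > \lambda_1$.

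Now I would estimate $n' - n = |\mu| - |\lambda| = \sum_{i\ge 1}({}^t\mu_i - {}^t\lambda_i)$ by splitting the sum at $i=j$ and $i=\lambda_1$:
\begin{itemize}
\item for $1 \le i \le j-1$, each summand is at most $1$;
\item for $j \le i \le \lambda_1$, each summand equals $0 - 1 = -1$;
\item for $i > \lambda_1$, each summand equals $0-0 = 0$.
\end{itemize}
Hence
\[
n' - n \le (j-1) - (\lambda_1 - j + 1) = 2j - \lambda_1 - 2.
\]
Combined with the hypothesis $n'-n \ge \lambda_1 - 1$, this yields $2j \ge 2\lambda_1 + 1$, so $j \ge \lambda_1 + 1$, contradicting $j \le \lambda_1$. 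Therefore ${}^t\mu_{\lambda_1} \ge 1$ and $\mu_1 \ge \lambda_1$, as required.

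The main step is really just the three-piece bound on $n'-n$; the only subtlety to watch for is that one must use the smallest $j$ with ${}^t\mu_j = 0$ so that the summands for $i < j$ are controlled by closeness without needing further information, and that one uses only the ``close'' part of 2-transverseness (the evenness of $^t\mu \cap {}^t\lambda$ plays no role here).
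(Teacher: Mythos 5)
Your argument is correct and reduces to the same elementary counting on columns of the Young diagrams as the paper's proof; the paper simply runs it directly rather than by contradiction. Concretely, the paper uses closeness to get ${}^t\mu_i \le {}^t\lambda_i + 1$ for $i \le \lambda_1$, sums over $i = 1,\dots,\lambda_1-1$ to bound $\sum_{i=1}^{\lambda_1-1}{}^t\mu_i \le n + \lambda_1 - 2 < n'$, and concludes ${}^t\mu_{\lambda_1} > 0$; your version of the same estimate, phrased via the hypothetical first zero $j \le \lambda_1$ of ${}^t\mu$, is correct but a bit longer. Both proofs use only the ``close'' half of $2$-transversality, as you note.
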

\begin{proof}
By Proposition \ref{u1}, $^t\mu$ and $^t\lambda$ are close, hence
\[
^t\mu_{i}\le {^t\lambda_i+1}\quad \mathrm{for}\quad i=1,2\ldots,\lambda_1.
\]
It follows that
\[
\sum_{i=1}^{\lambda_1-1}{^t\mu_{i}}\le\sum_{i=1}^{\lambda_1-1}(^t\lambda_i+1)=n-{^t\lambda_{\lambda_1}}+\lambda_1-1 \le n+\lambda_1-2< n'.
\]
Therefore $^t\mu_{\lambda_1}>0$, i.e. $\mu_1\ge \lambda_1$.
\end{proof}

A 2-\emph{hook} of a partition $\lambda$ is a pair of blocks of the form $\{(i, j), (i, j + 1)\}$ or $\{(i, j), (i + 1, j)\}$ on the boundary of $\lambda$, such that we still obtain a Young diagram by removing these blocks  from $\lambda$. A 2-hook of the above forms is called of type $(1^2)$ or (2) respectively. If  $\mu$ is obtained from $\lambda$ by removing a 2-hook, then we also say that $\lambda$ is obtained from $\mu$ by adding a 2-hook.

\begin{corollary} \label{CU3}
Keep notations as above. Then the following hold.

(i) If $\pi_\mu\subset\Theta_{n,n+m}(\pi_\lambda)$, $m\ge \lambda_1$ and $\mu_1\le m+2$, then $\mu$ is obtained from $[m+2,\lambda]$ by removing a $2$-hook;

(ii) If $\pi_\mu\subset\Theta_{n,n-\lambda_1+2}(\pi_\lambda)$, then $\mu$ is obtained from $\lambda_*=[\lambda_2,\lambda_3,\cdots,\lambda_k]$ by adding a $2$-hook.
\end{corollary}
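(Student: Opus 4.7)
The plan is to deduce both parts from Proposition~\ref{u1}, which translates $\pi_{\mu}\subset\Theta_{n,n'}(\pi_{\lambda})$ into the condition that ${}^{t}\mu$ and ${}^{t}\lambda$ are 2-transverse, followed by an elementary column-by-column analysis of the resulting skew shape.

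For (i), set $\nu=[m+2,\lambda]$, so $|\nu|=|\mu|+2$ and ${}^{t}\nu_{j}={}^{t}\lambda_{j}+1$ for $j\le\lambda_{1}$, ${}^{t}\nu_{j}=1$ for $\lambda_{1}<j\le m+2$, and $0$ otherwise. The closeness $|{}^{t}\mu_{j}-{}^{t}\lambda_{j}|\le 1$ combined with the hypothesis $\mu_{1}\le m+2$ (which forces ${}^{t}\mu_{j}=0$ for $j>m+2$) yields ${}^{t}\mu_{j}\le{}^{t}\nu_{j}$ for every $j$, so $\mu\subseteq\nu$ and the skew shape $\nu/\mu$ has exactly two cells. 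Writing $\delta_{j}={}^{t}\nu_{j}-{}^{t}\mu_{j}$, we have $\delta_{j}\in\{0,1,2\}$ for $j\le\lambda_{1}$, $\delta_{j}\in\{0,1\}$ for $\lambda_{1}<j\le m+2$, and $\sum_{j}\delta_{j}=2$. Thus either a single column $j_{0}$ has $\delta_{j_{0}}=2$ (Case A, forcing $j_{0}\le\lambda_{1}$), or two columns $j_{1}<j_{2}$ each have $\delta_{j_{i}}=1$ (Case B).

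In Case A the two removed cells stack vertically in column $j_{0}$, producing a type $(1^{2})$ 2-hook, and the even-intersection half of 2-transverseness is vacuous since no $j\le\lambda_{1}$ satisfies ${}^{t}\mu_{j}={}^{t}\lambda_{j}$. In Case B, the positive part of ${}^{t}\mu\cap{}^{t}\lambda$ equals the multiset $\{{}^{t}\lambda_{j}\colon j\le\lambda_{1},\ \delta_{j}=1\}$: the evenness excludes the mixed sub-case $j_{1}\le\lambda_{1}<j_{2}$, and when $j_{1},j_{2}\le\lambda_{1}$ it forces ${}^{t}\lambda_{j_{1}}={}^{t}\lambda_{j_{2}}$ (while the sub-case $j_{1}>\lambda_{1}$ has empty intersection and passes vacuously). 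Using the weak decrease of ${}^{t}\mu$, one checks in each surviving sub-case that $j_{2}=j_{1}+1$; hence the two cells lie in the same row of $\nu$, producing a type $(2)$ 2-hook.

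Part (ii) is entirely parallel with $\nu$ replaced by $\lambda_{*}$: the closeness gives ${}^{t}\mu_{j}\ge{}^{t}\lambda_{j}-1={}^{t}\lambda_{*,j}$ for $j\le\lambda_{1}$ and ${}^{t}\mu_{j}\ge 0={}^{t}\lambda_{*,j}$ for $j>\lambda_{1}$, hence $\lambda_{*}\subseteq\mu$ with $|\mu/\lambda_{*}|=2$; the same trichotomy of sub-cases arises, the mixed one is again excluded by the even-intersection condition, and monotonicity of ${}^{t}\mu$ forces the two added cells to be adjacent, giving a 2-hook. The main obstacle is purely combinatorial: closeness alone permits many size-two skew shapes with non-adjacent cells, and the content of the even-intersection part of 2-transverseness, combined with the monotonicity of ${}^{t}\mu$, is precisely to rule these disconnected configurations out, leaving only dominoes.
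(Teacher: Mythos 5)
Your proof is correct and follows essentially the same combinatorial strategy as the paper: translate the hypothesis via Proposition~\ref{u1} into closeness plus the even-intersection condition, then split according to whether a single column of the skew shape drops by~$2$ (the paper's case ${}^t\mu_j={}^t\lambda_j-1$) or two columns each drop by~$1$ (the paper's case ${}^t\mu_i\ge{}^t\lambda_i$ for all $i$), using evenness together with the monotonicity of ${}^t\mu$ to force the two cells to be adjacent. Your skew-shape/deficit-vector framing is a somewhat cleaner packaging of the same column-by-column case analysis the paper carries out directly, and you are a bit more explicit than the paper in separating and disposing of the sub-case $j_1>\lambda_1$.
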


\begin{proof}
We will only prove (i), and the proof of (ii) is similar.

Since $\pi_\mu\subset \Theta_{n,n+m}(\pi_\lambda)$, $^t\lambda_i-1\le{}^t\mu_i\le {}^t\lambda_i+1$, which implies that
\[
n+m=|\mu|=\sum_{i} {}^t\mu_i\le\sum_{i=1}^{m+2} {}^t\lambda_i+1=n+m+2.
\]
If there exists $j\in [1,m+2]$ such that $^t\mu_j={}^t\lambda_j-1$, then
\[
n+m=|\mu|={}^t\mu_j+\sum_{i\ne j} {}^t\mu_i\le{}^t\lambda_j-1+\sum_{i=1,i\ne j}^{m+2} {}^t\lambda_i+1=n+m.
\]
It follows that in this case $^t\mu_i={}^t\lambda_i+1$ if $i\ne j$. In other words, $\mu$ is obtained by removing two blocks from the $j$-th column of $[m+2,\lambda]$. Since $\mu$ is a partition,  these two blocks form a  2-hook of $[m+2,\lambda]$.

Next suppose that $^t\mu_i\ge{}^t\lambda_i$ for $i=1,\cdots,m+2$. It is easy to see that there exist $j<j'$ such that $^t\mu_j={}^t\lambda_j$ and $^t\mu_{j'}={}^t\lambda_{j'}$. Since $\mu$ and $\lambda$ are even, we must have $j'=j+1$, which implies that $\mu$ is obtained by removing two blocks from  the $^t\lambda_j$-th row of $[m+2,\lambda]$.
\end{proof}

\section{The Gan-Gross-Prasad problem}\label{sec4}

In Section \ref{sec3}, Proposition \ref{7.21} and Proposition \ref{7.31} show that parabolic induction preserves multiplicities, which are finite field analogs of Theorem 15.1 and Theorem 16.1 in \cite{GGP1} respectively for unipotent representations. This reduces the calculation to the basic case. In this section we prove Theorem \ref{1.1} using the theta correspondence and see-saw dual pairs.

\begin{lemma}\label{0}
Let $\lambda=[\lambda_1,\ldots, \lambda_k]$ and $\lambda'=[\lambda_1',\ldots, \lambda'_{k'}]$ be partitions of $n$ and $m$ respectively, $n>m$. If $\lambda'_1>\lambda_1+1$ or $\lambda'_1<\lambda_1-1$, then
\[
m(\pi_\lambda,\pi_{\lambda'})=0.
\]
\end{lemma}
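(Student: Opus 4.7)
I treat the Bessel case $n \not\equiv m \pmod{2}$; the Fourier-Jacobi case is entirely parallel, with Proposition \ref{7.31} replacing Proposition \ref{7.21}. By Proposition \ref{7.21}, after choosing an auxiliary cuspidal non-conjugate-self-dual representation $\tau$ of $\GG_\ell(\Fq)$ with $m + 2\ell = n+1$, the multiplicity rewrites as
\[
m(\pi_\lambda, \pi_{\lambda'}) = \langle R^{\UU_{n+1}}_{\GG_\ell \times \UU_m}(\tau \otimes \pi_{\lambda'}), \pi_\lambda \rangle_{\UU_n(\Fq)}.
\]

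Next, I invoke a see-saw dual pair in the spirit of the one exhibited in the introduction (attaching an auxiliary unitary group $\UU_k$ for a parameter $k$ to be chosen). Using the see-saw identity together with Proposition \ref{theta}, which commutes theta lifting past the $\tau$-induction, the right-hand side becomes a pairing whose vanishing is controlled by $\Theta_{m, k}(\pi_{\lambda'})$ and a parallel theta lift of $\pi_\lambda$.

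Under the hypothesis $\lambda'_1 \geq \lambda_1 + 2$, I select $k$ in the window $k < m - \lambda'_1$; Corollary \ref{CU1}(i) then forces $\Theta_{m, k}(\pi_{\lambda'}) = 0$, so the pairing vanishes and $m(\pi_\lambda, \pi_{\lambda'}) = 0$. The opposite case $\lambda'_1 \leq \lambda_1 - 2$ is handled symmetrically by transferring the theta-lift vanishing onto $\pi_\lambda$ via the same see-saw and applying Corollary \ref{CU1}(i) to $\pi_\lambda$ in place of $\pi_{\lambda'}$, with the threshold $k < n - \lambda_1$.

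The main obstacle is the combinatorial bookkeeping: one must choose the auxiliary parameter $k$ so that the see-saw makes sense (nonnegative rank and admissible Levi structure) and simultaneously lies strictly below the first-row threshold $m - \lambda'_1$ (resp.\ $n - \lambda_1$). The strict gap $|\lambda_1 - \lambda'_1| \geq 2$ is precisely what guarantees a nonempty such window; once this alignment is secured, the vanishing is immediate from Corollary \ref{CU1}(i) and the rest is formal manipulation of the see-saw identity.
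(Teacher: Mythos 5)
Your reduction via Proposition \ref{7.21} is correct, and the overall idea — realize the induced representation as a theta lift and bootstrap a see-saw identity to transfer the pairing into one that vanishes by Corollary \ref{CU1}(i) — is the paper's strategy. However, the way you have set up both cases contains genuine gaps.

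In the case $\lambda'_1 > \lambda_1 + 1$, the mechanism is not what you describe. One does not kill the multiplicity by choosing $k < m-\lambda_1'$ and invoking $\Theta_{m,k}(\pi_{\lambda'}) = 0$; the auxiliary parameter $k$ is not a free choice, it is forced by the see-saw. What the paper actually does is use Corollary \ref{CU1}(ii) (not (i)) on $\pi_{\lambda'}$ to write $\pi_{\lambda'} \subset \Theta_{m-\lambda_1',m}(\pi_{\lambda'_*})$, which via Proposition \ref{theta} forces the ambient see-saw to go through $\UU_{n+1-\lambda_1'}$. The vanishing that makes the pairing collapse is then $\Theta_{n,\,n+1-\lambda_1'}(\pi_\lambda)=0$, i.e.\ Corollary \ref{CU1}(i) applied to $\pi_\lambda$, whose threshold condition $n+1-\lambda_1' < n-\lambda_1$ is exactly $\lambda_1' > \lambda_1 + 1$. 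Your stated threshold $k < m - \lambda_1'$ is unrelated to this (and, when $\lambda' = [\lambda_1']$ has a single part, your window is empty, so your argument does not even get off the ground there).

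The more serious gap is the claim that the case $\lambda'_1 < \lambda_1 - 1$ is ``handled symmetrically.'' It is not. When $\lambda'_1 < \lambda_1 -1$, the analogous see-saw sends the problem \emph{up} to $\UU_{n+\lambda_1-1}$, and the relevant theta lift $\Theta_{n,\,n+\lambda_1-1}(\pi_\lambda)$ is emphatically \emph{nonzero}: by Proposition \ref{u1} it is a direct sum over all $\mu'$ with ${}^t\lambda$ and ${}^t\mu'$ $2$-transverse. The paper then has to argue term by term: Corollary \ref{CU2} is used to show each such $\mu'$ has $\mu'_1 \geq \lambda_1$, one writes $\pi_{\mu'}$ itself as a theta lift of $\pi_{\mu'_*}$, applies a \emph{second} see-saw, and only then does Corollary \ref{CU1}(i) (via the inequality $\mu_1'-1 \geq \lambda_1 - 1 > \lambda_1 - 2 = \mu_1$) kill each term. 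This two-stage argument, and the role of Corollary \ref{CU2}, is entirely absent from your sketch; a single application of Corollary \ref{CU1}(i) to $\pi_\lambda$ with threshold $k < n - \lambda_1$ does not suffice, because the see-saw forces the index above $n$ rather than below it.
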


\begin{proof}
We will only prove the Bessel case. The proof for the Fourier-Jacobi case is similar and will be omitted. By \cite[Proposition  5.2]{LW}, we only need to compute
\[
\langle \pi_\lambda,R^{\UU_{n+1}}_{\GG_\ell \times \UU_m}(\tau\otimes\pi_{\lambda'})\rangle_{\UU_n(\Fq)},
\]
where $\tau$ is an irreducible cuspidal representation of $\GG_\ell\fq$. We also assume that $\tau$ is not conjugate self-dual.

(i) Suppose that $\lambda_1'>\lambda_1+1$. Then by Corollary \ref{CU1} (i),
\begin{equation}\label{eq41}
\Theta_{n,n+1-\lambda_1'}(\pi_\lambda)=0.
 \end{equation}
Put $\lambda'_*=[\lambda_2',\ldots, \lambda_{k'}']$. By Corollary \ref{CU1} (ii),
 \[
 \pi_{\lambda'}\subset \Theta_{m-\lambda_1',m}(\pi_{\lambda'_*}).
 \]
By Proposition \ref{theta}, we have
 \[
 R^{\UU_{n+1}}_{\GG_\ell \times \UU_m}(\tau\otimes\pi_{\lambda'}) \subset \Theta_{n+1-\lambda_1',n+1}(R^{\UU_{n+1-\lambda'_1}}_{\GG_\ell \times \UU_{m-\lambda_1'}}(\tau\otimes\pi_{\lambda'_*})).
\]

Consider the see-saw diagram
\[
\setlength{\unitlength}{0.8cm}
\begin{picture}(20,5)
\thicklines
\put(5.6,4){$\UU_{n+1-\lambda_1'}\times \UU_{n+1-\lambda_1'}$}
\put(6.7,1){$\UU_{n+1-\lambda_1'}$}
\put(12.7,4){$\UU_{n+1}$}
\put(12.3,1){$\UU_{n}\times \UU_1$}
\put(7.7,1.5){\line(0,1){2.1}}
\put(12.8,1.5){\line(0,1){2.1}}
\put(8,1.5){\line(2,1){4.2}}
\put(8,3.7){\line(2,-1){4.2}}
\end{picture}
\]
By the see-saw identity, Proposition \ref{theta} and (\ref{eq41}), one has
\[
\begin{aligned}
&\langle \pi_\lambda,R^{\UU_{n+1}}_{\GG_\ell \times \UU_m}(\tau\otimes\pi_{\lambda'})\rangle_{\UU_n(\Fq)}\\
\le& \langle \pi_\lambda,\Theta_{n+1-\lambda_1',n+1}(R^{\UU_{n+1-\lambda_1'}}_{\GG_\ell \times \UU_{m-\lambda_1'}}(\tau\otimes\pi_{\lambda'_*}))\rangle_{\UU_n(\Fq)}\\
=& \langle \Theta_{n,n+1-\lambda_1'}(\pi_\lambda)\otimes \omega_{n+1-\lambda_1'},R^{\UU_{n+1-\lambda'_1}}_{\GG_\ell \times \UU_{m-\lambda_1'}}(\tau\otimes\pi_{\lambda'_*})\rangle_{\UU_{n+1-\lambda_1'}(\Fq)}\\
=& 0.
\end{aligned}
\]

(ii) Suppose that $\lambda_1'<\lambda_1-1$. Put $\mu=[\lambda_1-2,\lambda_1',\lambda_2',\ldots, \lambda_{k'}']$. Then by Corollary \ref{CU1} (ii) and Proposition \ref{theta}, one has
\[
\Theta_{n+\lambda_1-1,n+1}(R^{\UU_{n+\lambda_1-1}}_{\GG_\ell \times \UU_{m+\lambda_1-2}}(\tau\otimes\pi_{\mu}))=R^{\UU_{n+1}}_{\GG_\ell \times \UU_{m}}(\tau\otimes\pi_{\lambda'}).
 \]

 Consider the see-saw diagram
\[
\setlength{\unitlength}{0.8cm}
\begin{picture}(20,5)
\thicklines
\put(5.8,4){$\UU_{n+\lambda_1-1}\times \UU_{n+\lambda_1-1}$}
\put(6.9,1){$\UU_{n+\lambda_1-1}$}
\put(12.7,4){$\UU_{n+1}$}
\put(12.3,1){$\UU_{n}\times \UU_1$}
\put(7.7,1.5){\line(0,1){2.1}}
\put(12.8,1.5){\line(0,1){2.1}}
\put(8,1.5){\line(2,1){4.2}}
\put(8,3.7){\line(2,-1){4.2}}
\end{picture}
\]
Similar to the proof in (i), one has
\begin{equation}\label{6.A}
\begin{aligned}
&
\langle \pi_\lambda,R^{\UU_{n+1}}_{\GG_\ell \times \UU_{m}}(\tau\otimes\pi_{\lambda'})\rangle_{\UU_n(\Fq)}\\
=& \langle \pi_\lambda,\Theta_{n+\lambda_1-1,n+1}(R^{\UU_{n+\lambda_1-1}}_{\GG_\ell \times \UU_{m+\lambda_1-2}}(\tau\otimes\pi_{\mu}))\rangle_{\UU_n(\Fq)}\\
=& \langle \Theta_{n,n+\lambda_1-1}(\pi_\lambda)\otimes \omega_{n+\lambda_1-1},R^{\UU_{n+\lambda_1-1}}_{\GG_\ell \times \UU_{m+\lambda_1-2}}(\tau\otimes\pi_{\mu})\rangle_{\UU_{n+\lambda_1-1}(\Fq)}.
\end{aligned}
\end{equation}
By Proposition \ref{u1},
\[
\Theta_{n,n+\lambda_1-1}(\pi_{\lambda})=\bigoplus_{\mbox{\tiny$\begin{array}{c}{}^t\lambda\ \mathrm{and}\ {}^t\mu'\mathrm{\ are\ }2\textrm{-}\mathrm{transverse}\\
|\mu'|=n+\lambda_1-1 \end{array}$}}\pi_{\mu'}.
\]
It suffices to prove that
\[
\langle \pi_{\mu'}\otimes \omega_{n+\lambda_1-1},R^{\UU_{n+\lambda_1-1}}_{\GG_\ell \times \UU_{m+\lambda_1-2}}(\tau\otimes\pi_{\mu})\rangle_{\UU_{n+\lambda_1-1}(\Fq)}=0
\]
for any partition $\mu'$ in the above direct sum.

By Corollary \ref{CU2}, if $^t\lambda\ \mathrm{and}\ {}^t\mu'\mathrm{\ are\
}2\textrm{-}\mathrm{transverse}$ and $|\mu'|=n+\lambda_1-1 $, then $\mu'_1\ge \lambda_1$.
Fix $\mu'=[\mu_1',\mu'_2,\ldots,\mu_l']$, and put $\mu'_*=[\mu'_2,\ldots,\mu_l']$. Then $|\mu'_*|=n+\lambda_1-1-\mu_1'<n$. By Corollary \ref{CU1} (ii), one has
 \[
 \pi_{\mu'} \subset \Theta_{n+\lambda_1-1-\mu_1',n+\lambda_1-1}(\pi_{\mu'_*}).
 \]
 Consider the see-saw diagram
\[
\setlength{\unitlength}{0.8cm}
\begin{picture}(20,5)
\thicklines
\put(5.8,4){$\UU_{n+\lambda_1-1}\times \UU_{n+\lambda_1-1}$}
\put(6.9,1){$\UU_{n+\lambda_1-1}$}
\put(12.0,4){$\UU_{n+\lambda_1-\mu_1'}$}
\put(11.3,1){$\UU_{n+\lambda_1-1-\mu_1'}\times \UU_1$}
\put(7.7,1.5){\line(0,1){2.1}}
\put(12.8,1.5){\line(0,1){2.1}}
\put(8,1.5){\line(2,1){4.2}}
\put(8,3.7){\line(2,-1){4.2}}
\end{picture}
\]
Then
\begin{equation}\label{6.B}
\begin{aligned}
&\langle \pi_{\mu'}\otimes \omega_{n+\lambda_1-1},R^{\UU_{n+\lambda_1-1}}_{\GG_\ell \times \UU_{m+\lambda_1-2}}(\tau\otimes\pi_{\mu})\rangle_{\UU_{n+\lambda_1-1}(\Fq)}\\
\le &\langle \Theta_{n+\lambda_1-1-\mu_1',n+\lambda_1-1}(\pi_{\mu'_*})\otimes \omega_{n+\lambda_1-1},R^{\UU_{n+\lambda_1-1}}_{\GG_\ell \times \UU_{m+\lambda_1-2}}(\tau\otimes\pi_{\mu})\rangle_{\UU_{n+\lambda_1-1}(\Fq)}\\
= &\langle \pi_{\mu'_*},\Theta_{n+\lambda_1-1,n+\lambda_1-\mu_1'}(R^{\UU_{n+\lambda_1-1}}_{\GG_\ell \times \UU_{m+\lambda_1-2}}(\tau\otimes\pi_{\mu}))\rangle_{\UU_{n+\lambda_1-1-\mu_1'}(\Fq)}.\\
\end{aligned}
\end{equation}
Since $\mu_1'-1\ge\lambda_1-1>\lambda_1-2=\mu_1$, by Corollary \ref{CU1} (i) and Proposition \ref{theta}, \[
\Theta_{n+\lambda_1-1,n+\lambda_1-\mu_1'}(R^{\UU_{n+\lambda_1-1}}_{\GG_\ell \times \UU_{m+\lambda_1-2}}(\tau\otimes\pi_{\mu}))=0.
\]
Hence by (\ref{6.A}) and (\ref{6.B}), we have
\[
\begin{aligned}
\langle \pi_\lambda,R^{\UU_{n+1}}_{\GG_\ell \times \UU_{m}}(\tau\otimes\pi_{\lambda'})\rangle_{\UU_n(\Fq)}&=\langle \Theta_{n,n+\lambda_1-1}(\pi_\lambda)\otimes \omega_{n+\lambda_1-1},R^{\UU_{n+\lambda_1-1}}_{\GG_\ell \times \UU_{m+\lambda_1-2}}(\tau\otimes\pi_{\mu})\rangle_{\UU_{n+\lambda_1-1}(\Fq)}=0.
\end{aligned}
\]

\end{proof}

\begin{theorem}\label{UU}
Let $\lambda$ and $\lambda'$ be partitions of $n$ and $m$ respectively, $n\geq m$. Then
\[
m(\pi_\lambda, \pi_{\lambda'})=\left\{
\begin{array}{ll}
1, &  \textrm{if }\
\lambda \textrm{ and }
\lambda'
 \textrm{ are }  2\textrm{-transverse},\\
0, & \textrm{otherwise.}
\end{array}\right.
\]
\end{theorem}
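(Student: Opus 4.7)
The plan is to complete the proof of Theorem~\ref{UU} (in the Bessel case; the Fourier-Jacobi case is parallel and omitted) by induction on $|\lambda|+|\lambda'|$, building on Lemma~\ref{0}. By Proposition~\ref{7.21} we may write
\[
m(\pi_\lambda,\pi_{\lambda'})=\langle\pi_\lambda,\,R^{\UU_{n+1}}_{\GG_\ell\times\UU_m}(\tau\otimes\pi_{\lambda'})\rangle_{\UU_n(\Fq)}
\]
for a cuspidal representation $\tau$ of $\GG_\ell\fq$ that is not conjugate self-dual, so that Proposition~\ref{theta} is available. Lemma~\ref{0} disposes of $|\lambda_1-\lambda'_1|>1$, so it remains to treat $\lambda'_1-\lambda_1\in\{-1,0,1\}$.

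In each of these three cases I realize $\pi_{\lambda'}=\Theta_{m-\lambda'_1,m}(\pi_{\lambda'_*})$ via Corollary~\ref{CU1}(ii), push $\Theta$ through the parabolic induction using Proposition~\ref{theta}, and apply the see-saw identity for the dual pairs $(\UU_{n+1-\lambda'_1},\,\UU_{n+1-\lambda'_1}\times\UU_{n+1-\lambda'_1})$ and $(\UU_n\times\UU_1,\,\UU_{n+1})$, exactly as in the proof of Lemma~\ref{0}. This transforms the problem into
\[
m(\pi_\lambda,\pi_{\lambda'})=\langle\Theta_{n,n+1-\lambda'_1}(\pi_\lambda)\otimes\omega_{n+1-\lambda'_1},\,R^{\UU_{n+1-\lambda'_1}}_{\GG_\ell\times\UU_{m-\lambda'_1}}(\tau\otimes\pi_{\lambda'_*})\rangle_{\UU_{n+1-\lambda'_1}(\Fq)}.
\]
Since $|\lambda_*|-|\lambda'_*|=(n-m)+(\lambda'_1-\lambda_1)$, the right-hand side is a Fourier-Jacobi inner product when $\lambda'_1=\lambda_1\pm1$ and a Bessel one when $\lambda'_1=\lambda_1$. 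Proposition~\ref{7.31} (resp.\ Proposition~\ref{7.21}) then expresses it as $\sum_\mu m(\pi_\mu,\pi_{\lambda'_*})$ with $\mu$ ranging over partitions appearing in $\Theta_{n,n+1-\lambda'_1}(\pi_\lambda)$; each such term lies in $\{0,1\}$ by the inductive hypothesis (as $|\mu|+|\lambda'_*|<|\lambda|+|\lambda'|$).

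When $\lambda'_1=\lambda_1+1$, Corollary~\ref{CU1}(ii) gives $\Theta_{n,n-\lambda_1}(\pi_\lambda)=\pi_{\lambda_*}$, so only $\mu=\lambda_*$ contributes and the identity collapses to $m(\pi_{\lambda_*},\pi_{\lambda'_*})$; a short check that $(\lambda,\lambda')$ is 2-transverse iff $(\lambda_*,\lambda'_*)$ is, under the hypothesis $\lambda'_1=\lambda_1+1$, finishes this case. When $\lambda'_1=\lambda_1-1$, Corollary~\ref{CU3}(ii) identifies $\Theta_{n,n+2-\lambda_1}(\pi_\lambda)$ as a direct sum of $\pi_\mu$ over partitions obtained from $\lambda_*$ by adding a $2$-hook; when $\lambda'_1=\lambda_1$, the sum is the general one given by Proposition~\ref{u1}, further restricted by $\mu_1\ge\lambda_1$ via Corollary~\ref{CU2}.

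The main obstacle is the combinatorial matching in these latter two cases, where $\Theta_{n,n+1-\lambda'_1}(\pi_\lambda)$ is a genuine direct sum: one must show that $\sum_\mu m(\pi_\mu,\pi_{\lambda'_*})$ has exactly one nonzero summand when $\lambda$ and $\lambda'$ are 2-transverse and vanishes otherwise. Concretely, this amounts to pinning down a unique $\mu$ which is simultaneously 2-transverse with ${}^t\lambda$ (through the theta-correspondence description) and with $\lambda'_*$ (through the inductive hypothesis), and then verifying that such a $\mu$ exists precisely when the parity condition on $\lambda\cap\lambda'$ in the definition of 2-transverse holds.
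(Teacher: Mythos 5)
Your reduction via Propositions~\ref{7.21}/\ref{7.31}, the use of Lemma~\ref{0} to restrict to $|\lambda_1-\lambda'_1|\le 1$, and the treatment of the case $\lambda'_1=\lambda_1+1$ (reduce to $m(\pi_{\lambda_*},\pi_{\lambda'_*})$ via Corollary~\ref{CU1}(ii) and a see-saw, using that $\Theta_{n,n-\lambda_1}(\pi_\lambda)=\pi_{\lambda_*}$ is a single term) all match the paper. However, in the other two cases your argument has genuine gaps and you explicitly defer the main step.

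In the case $\lambda'_1=\lambda_1$ your appeal to Corollary~\ref{CU2} is not valid: that corollary requires $n'\ge n+\lambda_1-1$, whereas here $n'=n+1-\lambda_1<n$ whenever $\lambda_1\ge 2$. In fact the situation is simpler than you fear: a direct computation (carried out in the paper) shows that $\Theta_{n,n+1-\lambda_1}(\pi_\lambda)$ vanishes when $\lambda_1>\lambda_2$ (so both sides of the theorem are zero, since $\lambda\cap\lambda'$ then has an odd part), and equals the single term $\pi_{[\lambda_2+1,\lambda_3,\dots,\lambda_k]}$ when $\lambda_1=\lambda_2$, collapsing the right side to a single $m$-value. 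There is never a nontrivial ``combinatorial matching'' to do in this case, but one must actually perform the theta-lift constituent analysis, which you omit.

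In the case $\lambda'_1=\lambda_1-1$ your proposed see-saw genuinely lands you in a sum $\sum_\mu m(\pi_\mu,\pi_{\lambda'_*})$ with $\mu$ ranging over $2$-hook additions to $\lambda_*$; you acknowledge this as the ``main obstacle'' and do not resolve it. Moreover, the displayed equality
$m(\pi_\lambda,\pi_{\lambda'})=\langle\Theta_{n,n+1-\lambda'_1}(\pi_\lambda)\otimes\omega_{n+1-\lambda'_1},R^{\UU_{n+1-\lambda'_1}}_{\GG_\ell\times\UU_{m-\lambda'_1}}(\tau\otimes\pi_{\lambda'_*})\rangle$
is not justified here: as in the paper's (\ref{ggp1}), the theta lift of the parabolically induced representation contains extra constituents with $\mu'_1\ge\lambda'_1+2=\lambda_1+1$, and Lemma~\ref{0} only kills those with $\mu'_1>\lambda_1+1$; the boundary terms with $\mu'_1=\lambda_1+1$ survive and are uncontrolled in your write-up. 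The paper circumvents both difficulties by running a \emph{different} see-saw in this case: it first peels off a $\GG_1$-factor from $\tau$ via induction in stages, reducing to the codimension-one Bessel pairing $\langle\pi_\lambda,R^{\UU_{n-1}}_{\GG_{\ell-1}\times\UU_m}(\tau_2\otimes\pi_{\lambda'})\rangle_{\UU_{n-1}}$, and then realizes $\pi_\lambda=\Theta_{n-\lambda_1,n}(\pi_{\lambda_*})$ as a \emph{single-term} theta lift (Corollary~\ref{CU1}(ii)); the see-saw then reduces everything cleanly to $m(\pi_{\lambda_*},\pi_{\lambda'_*})$. The organizing principle you are missing is to choose the see-saw so that one side is a singleton theta lift, which avoids any multi-term combinatorics. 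As written, your proof is incomplete in cases (ii) and (iii).
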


\begin{proof}
We prove the proposition by induction on $n$. If $n=1$, then the Bessel case is vacuum and the Fourier-Jacobi case follows from
\[
\langle{\bf 1}\otimes\omega_{1},{\bf 1}\rangle_{\UU_1(\Fq)}=0.
\]

Suppose that the proposition holds for $n'<n$. Then we will prove the Bessel case for $n$. The proof for the Fourier-Jacobi case is similar and will be left to the reader.

As before, write $\lambda=[\lambda_1,\ldots,\lambda_k]$ which is a partition of $n$ into $k$ rows and $l:=\lambda_1$ columns. Similarly write $\lambda'=[\lambda_1',\ldots, \lambda'_{k'}]$ and put $l':=\lambda_1'$.  By Lemma \ref{0}, there are only three cases for $\lambda_1'$ to be considered: $\lambda_1'=\lambda_1-1$, $\lambda_1$ or $\lambda_1+1$.

(i) Suppose that $\lambda_1'=\lambda_1+1$. Put $\lambda_*=[\lambda_2,\ldots, \lambda_{k}]$ and $\lambda'_*=[\lambda_2',\ldots, \lambda_{k'}']$. By Corollary \ref{CU1} (ii), we have
 \[
 \Theta_{n,n+1-\lambda_1'}(\pi_{\lambda})=\Theta_{n,n-\lambda_1}(\pi_{\lambda}) = \pi_{\lambda_*}.
 \]
Let $\tau$ be an irreducible cuspidal representation of $\GG_\ell\fq$ which is not conjugate self-dual. By Corollary \ref{CU3} (i) and Proposition \ref{theta},
  \begin{equation}\label{ggp1}
 \Theta_{n+1-\lambda_1',n+1}(R^{\UU_{n+1-\lambda_1'}}_{\GG_\ell \times \UU_{m-\lambda_1'}}(\tau\otimes\pi_{\lambda'_*}))=R^{\UU_{n+1}}_{\GG_\ell \times \UU_{m}}(\tau\otimes\pi_{\lambda'})+\sum_{\mbox{\tiny$\begin{array}{c}{}\mu\in \Theta_{m-\lambda_1',m}(\lambda'_*)\\
\mu_1>\lambda_1'+1\end{array}$}}R^{\UU_{n+1}}_{\GG_\ell \times \UU_{m}}(\tau\otimes\pi_{\mu'}).
 \end{equation}
 By Lemma \ref{0}, for any $\mu$ with $\mu_1>\lambda_1'+1>\lambda_1+1$,
  \begin{equation}\label{ggp2}
 \langle \pi_\lambda,R^{\UU_{n+1}}_{\GG_\ell \times \UU_{m}}(\tau\otimes\pi_{\mu'})\rangle_{\UU_n(\Fq)}=0.
 \end{equation}

 Consider the see-saw diagram
\[
\setlength{\unitlength}{0.8cm}
\begin{picture}(20,5)
\thicklines
\put(5.5,4){$\UU_{n+1-\lambda_1'}\times \UU_{n+1-\lambda_1'}$}
\put(6.6,1){$\UU_{n+1-\lambda_1'}$}
\put(12.3,4){$\UU_{n+1}$}
\put(12.0,1){$\UU_{n}\times \UU_1$}
\put(7.7,1.5){\line(0,1){2.1}}
\put(12.8,1.5){\line(0,1){2.1}}
\put(8,1.5){\line(2,1){4.2}}
\put(8,3.7){\line(2,-1){4.2}}
\end{picture}
\]
Similar to the proof of Lemma \ref{0}, by (\ref{ggp1}) and (\ref{ggp2}) one has
\[
\begin{aligned}
&\langle \pi_\lambda,R^{\UU_{n+1}}_{\GG_\ell \times \UU_{m}}(\tau\otimes\pi_{\lambda'})\rangle_{\UU_n(\Fq)}\\
=&\langle \pi_\lambda,\Theta_{n+1-\lambda_1',n+1}(R^{\UU_{n+1-\lambda'_1}}_{\GG_\ell \times \UU_{m-\lambda_1'}}(\tau\otimes\pi_{\lambda'_*}))\rangle_{\UU_n(\Fq)}\\
=&\langle \Theta_{n,n+1-\lambda_1'}(\pi_\lambda)\otimes \omega_{n+1-\lambda_1'},R^{\UU_{n+1-\lambda_1'}}_{\GG_\ell \times \UU_{m-\lambda_1'}}(\tau\otimes\pi_{\lambda'_*})\rangle_{\UU_{n+1-\lambda_1'}(\Fq)}\\
=&\langle \pi_{\lambda_*}\otimes \omega_{n+1-\lambda_1'},R^{\UU_{n+1-\lambda_1'}}_{\GG_\ell \times \UU_{m-\lambda_1'}}(\tau\otimes\pi_{\lambda'_*})\rangle_{\UU_{n+1-\lambda_1'}(\Fq)}\\
=& m(\pi_{\lambda_*}, \pi_{\lambda'_*}).
\end{aligned}
\]
By induction hypothesis on $n$ for the Fourier-Jacobi case, one has
\[
m(\pi_{\lambda_*},\pi_{\lambda'_*})=\left\{
\begin{array}{ll}
1, &  \textrm{if }\ \lambda_* \textrm{ and } \lambda'_* \textrm{ are }  2\textrm{-transverse},\\
0, & \textrm{otherwise.}
\end{array}\right.
\]
Since $\lambda_1\ne \lambda_1'$, it is clear that $\lambda$ and $\lambda'$ are 2-transverse if and only if $\lambda_*$ and $\lambda'_*$ are 2-transverse.

(ii) Suppose that $\lambda_1'=\lambda_1$. Let $\lambda'_*$ and $\tau$ be as above. Similar to the proof of (i), one has
\[
\begin{aligned}
& \langle \pi_\lambda,R^{\UU_{n+1}}_{\GG_\ell \times \UU_{m}}(\tau\otimes\pi_{\lambda'})\rangle_{\UU_n(\Fq)}\\
=&\langle \pi_\lambda,\Theta_{n+1-\lambda_1',n+1}(R^{\UU_{n+1-\lambda'_1}}_{\GG_\ell \times \UU_{m-\lambda_1'}}(\tau\otimes\pi_{\lambda'_*}))\rangle_{\UU_n(\Fq)}\\
=&\langle \Theta_{n,n+1-\lambda_1'}(\pi_\lambda)\otimes \omega_{n+1-\lambda_1'},R^{\UU_{n+1-\lambda_1'}}_{\GG_\ell \times \UU_{m-\lambda_1'}}(\tau\otimes\pi_{\lambda'_*})\rangle_{\UU_{n+1-\lambda_1'}(\Fq)}.\\
\end{aligned}
\]

First assume that $\lambda_1>\lambda_2$. Then we claim that $\Theta_{n,n+1-\lambda_1'}(\pi_\lambda)=0$. In fact if $\pi_{\mu'}\subset \Theta_{n,n+1-\lambda_1'}(\pi_\lambda)$, then by Proposition \ref{u1} one has
\[
^t\mu'_{i}\ge {}^t\lambda_i-1\quad \mathrm{for}\quad i=1,2\ldots,l.
\]
It follows that
\begin{equation}\label{ggp3}
n+1-\lambda_1=|\mu'|\geq \sum_{i=1}^l{^t\mu_{i}'}\ge\sum_{i=1}^{l}(^t\lambda_i-1)=n-\lambda_1.
\end{equation}
We have three cases for $\mu'_1$:
\begin{itemize}
\item
If $\mu'_1> l+1$, then
\[
n+1-\lambda_1=|\mu'|\ge\sum_{i=1}^{l+2}{^t\mu_{i}'}\ge 2+\sum_{i=1}^l{^t\mu_{i}'}\geq n+2-\lambda_1
\]
which is impossible.
\item If $\mu_1'= l+1$, then by (\ref{ggp3}),
\[
^t\mu_i'=\left\{
\begin{array}{ll}
^t\lambda_i-1, &  \textrm{if }\ i\le l ,\\
1, & \textrm{if }\ i=l+1.
\end{array}\right.
\]
Since $\lambda_1> \lambda_2$, we have $^t\lambda_l=1$ and thus $^t\mu_l'={}^t\lambda_l-1=0<{^t\mu_{l+1}}$, which is impossible.

\item It follows that $\mu_1'\le l=\lambda_1$. By (\ref{ggp3}), there exist $j\in [1,l]$ such that
\[
^t\mu_i'=\left\{
\begin{array}{ll}
^t\lambda_i-1, &  \textrm{if }\ i\ne j \textrm{ and }i\in [1,l],\\
^t\lambda_i, &\textrm{if }\  i=j.
\end{array}\right.
\]
Therefore $^t\mu'\cap{}^t\lambda=[^t\lambda_j]$ is not even. This contradicts the fact that $\pi_\mu'\subset \Theta_{n,n+1-\lambda_1'}(\pi_\lambda)$.
\end{itemize}
This proves the claim that $\Theta_{n,n+1-\lambda_1'}(\pi_\lambda)=0$.
Hence if $\lambda_1\ne\lambda_2$, then
\[
\begin{aligned}
\langle \pi_\lambda,R^{\UU_{n+1}}_{\GG_\ell \times \UU_{m}}(\tau\otimes\pi_{\lambda'})\rangle_{\UU_n(\Fq)}&=\langle \Theta_{n,n+1-\lambda_1'}(\pi_\lambda)\otimes \omega_{n+1-\lambda_1'},R^{\UU_{n+1-\lambda_1'}}_{\GG_\ell \times \UU_{m-\lambda_1'}}(\tau\otimes\pi_{\lambda'_*})\rangle_{\UU_{n+1-\lambda_1'}(\Fq)}=0.
\end{aligned}
\]
Moreover, by our assumption $\lambda_1=\lambda_1'$, hence $\#\{i|\lambda_i=\lambda_i'=\lambda_1\}=1$, which shows that $\lambda$ and $\lambda'$ are not 2-transverse.

Next assume that $\lambda_1=\lambda_2$. By the above discussion, $\pi_\mu'\subset \Theta_{n,n+1-\lambda_1'}(\pi_\lambda)$ if and only if
\[
^t\mu_i'=\left\{
\begin{array}{ll}
^t\lambda_i-1, &  \textrm{if } i\le l ,\\
1, &\textrm{if }i=l+1.
\end{array}\right.
\]
In other words, $\mu'=[\lambda_2+1,\lambda_3,\lambda_4,\ldots,\lambda_k]$. Then by induction on $n$,
\[
\begin{aligned}
&\langle \pi_\lambda,R^{\UU_{n+1}}_{\GG_\ell \times \UU_{m}}(\tau\otimes\pi_{\lambda'})\rangle_{\UU_n(\Fq)}\\
=&\langle \Theta_{n,n+1-\lambda_1'}(\pi_\lambda)\otimes \omega_{n+1-\lambda_1'},R^{\UU_{n+1-\lambda_1'}}_{\GG_\ell \times \UU_{m-\lambda_1'}}(\tau\otimes\pi_{\lambda'_*})\rangle_{\UU_{n+1-\lambda_1'}(\Fq)}\\
=&\langle \pi_{\mu'}\otimes \omega_{n+1-\lambda_1'},R^{\UU_{n+1-\lambda_1'}}_{\GG_\ell \times \UU_{m-\lambda_1'}}(\tau\otimes\pi_{\lambda'_*})\rangle_{\UU_{n+1-\lambda_1'}(\Fq)}
\\
=& m(\pi_{\mu'}, \pi_{\lambda'_*})\\
=&\left\{
\begin{array}{ll}
1, &  \textrm{if }\ \mu' \textrm{ and } \lambda'_* \textrm{ are }  2\textrm{-transverse},\\
0, & \textrm{otherwise.}
\end{array}\right.
\end{aligned}
\]
If $\mu'$ and $\lambda'_*$ are not 2-transverse, then it is clear that $\lambda$ and $\lambda'$ are not 2-transverse. On the other hand, if $\mu'$ and $\lambda'_*$ are 2-transverse, then $|\lambda_2+1-\lambda_2'|\le 1$, which implies that $\lambda_2'\ge \lambda_2=\lambda_1$. On the other hand $\lambda_2'\le\lambda_1'=\lambda_1$, which implies that $\lambda_2'=\lambda_1=\lambda_1'=\lambda_2$. Hence if $\mu'$ and $\lambda'_*$ are 2-transverse, so are $\lambda$ and $\lambda'$.

(iii) Suppose that $\lambda_1'=\lambda_1-1$. Let $\lambda_*$ and $\lambda'_*$ be as above, and
\[
\tau= R^{\GG_\ell}_{{\GG_1}\times {\GG_{\ell-1}}}(\tau_1\otimes\tau_2),
\]
where $\tau_1$ and $\tau_2$ are irreducible  cuspidal representations  of ${\GGL_1(\bb{F}_{q^2})}$ and ${\GG_{\ell-1}}(\bb{F}_{q^2})$ respectively that are not conjugate self-dual. By Proposition \ref{7.21}, we only need to compute
\[
\langle \pi_\lambda,R^{\UU_{n+1}}_{\GG_\ell \times \UU_{m}}(\tau\otimes\pi_{\lambda'})\rangle_{\UU_n(\Fq)}.
\]
We have
\[
\begin{aligned}
& \langle \pi_\lambda,R^{\UU_{n+1}}_{\GG_\ell \times \UU_{m}}(\tau\otimes\pi_{\lambda'})\rangle_{\UU_n(\Fq)}\\
=&\langle \pi_\lambda,R^{\UU_{n+1}}_{\GG_\ell \times \UU_{m}}(R^{\GG_\ell}_{{\GG_1}\times {\GG_{\ell-1}}}(\tau_1\otimes\tau_2)\otimes\pi_{\lambda'})\rangle_{\UU_n(\Fq)}\\
=&\langle \pi_\lambda,R^{\UU_{n+1}}_{\GG_\ell \times \UU_{m}}(\tau_1\otimes
R^{\UU_{n-1}}_{\GG_{\ell-1} \times \UU_{m}}(\tau_2\otimes\pi_{\lambda'}))\rangle_{\UU_n(\Fq)}\\
=& m(\pi_\lambda, R^{\UU_{n-1}}_{\GG_{\ell-1} \times \UU_{m}}(\tau_2\otimes\pi_{\lambda'}))\\
=&\langle \pi_\lambda,
R^{\UU_{n-1}}_{\GG_{\ell-1} \times \UU_{m}}(\tau_2\otimes\pi_{\lambda'})\rangle_{\UU_{n-1}(\Fq)}.
\end{aligned}
\]
Consider the see-saw diagram
\[
\setlength{\unitlength}{0.8cm}
\begin{picture}(20,5)
\thicklines
\put(6.2,4){$\UU_{n-\lambda_1}\times \UU_{n-\lambda_1}$}
\put(7,1){$\UU_{n-\lambda_1}$}
\put(12.4,4){$\UU_{n}$}
\put(11.4,1){$\UU_{n-1}\times \UU_1$}
\put(7.7,1.5){\line(0,1){2.1}}
\put(12.8,1.5){\line(0,1){2.1}}
\put(8,1.5){\line(2,1){4.2}}
\put(8,3.7){\line(2,-1){4.2}}
\end{picture}
\]
Similar to  case (i), applying Corollary \ref{CU3} and Lemma \ref{0}, one can show that
 \[
\begin{aligned}
& \langle \pi_\lambda,
R^{\UU_{n-1}}_{\GG_{\ell-1} \times \UU_{m}}(\tau_2\otimes\pi_{\lambda'})\rangle_{\UU_{n-1}(\Fq)}\\
=&\langle \Theta_{n-\lambda_1,n}(\pi_{\lambda_*}),
R^{\UU_{n-1}}_{\GG_{\ell-1} \times \UU_{m}}(\tau_2\otimes\pi_{\lambda'})\rangle_{\UU_{n-1}(\Fq)}\\
=&\langle \pi_{\lambda_*},
\Theta_{n-1,n-\lambda_1}(R^{\UU_{n-1}}_{\GG_{\ell-1} \times \UU_{m}}(\tau_2\otimes\pi_{\lambda'}))\otimes\omega_{n-\lambda_1}\rangle_{\UU_{n-\lambda_1}(\Fq)}\\
=&\langle \pi_{\lambda_*},
R^{\UU_{n-\lambda_1}}_{\GG_{\ell-1} \times \UU_{m-\lambda_1+1}}(\tau_2\otimes\Theta_{m,m-(\lambda_1-1)}(\pi_{\lambda'}))\otimes\omega_{n-\lambda_1}\rangle_{\UU_{n-\lambda_1}(\Fq)}.\\
=&\langle \pi_{\lambda_*},
R^{\UU_{n-\lambda_1}}_{\GG_{\ell-1} \times \UU_{m-\lambda_1+1}}(\tau_2\otimes\Theta_{m,m-\lambda_1'}(\pi_{\lambda'}))\otimes\omega_{n-\lambda_1}\rangle_{\UU_{n-\lambda_1}(\Fq)}\\
=&\langle \pi_{\lambda_*},
R^{\UU_{n-\lambda_1}}_{\GG_{\ell-1} \times \UU_{m-\lambda_1+1}}(\tau_2\otimes \pi_{\lambda'_*})\otimes\omega_{n-\lambda_1}\rangle_{\UU_{n-\lambda_1}(\Fq)}\
\end{aligned}
\]
Since the Weil representation of a finite unitary group is self-dual by \cite{Ger} ,  we see from  Proposition \ref{7.31} that the above last term  is equal to
 \[
 \langle \pi_{\lambda_*}\otimes\omega_{n-\lambda_1},
R^{\UU_{n-\lambda_1}}_{\GG_{\ell-1} \times \UU_{m-\lambda_1+1}}(\tau_2\otimes\pi_{\lambda'_*})\rangle_{\UU_{n-\lambda_1}(\Fq)}\\
= m(\lambda_*,\lambda'_*).
 \]
Since $\lambda_1\neq \lambda_1'$,  $\lambda$ and $\lambda'$ are 2-transverse if and only if $\lambda_*$ and $\lambda'_*$ are 2-transverse, which completes the proof by induction on $n$.
\end{proof}

Finally, Theorem \ref{1.1}
follows immediately from Proposition \ref{u1} and Theorem \ref{UU}.

\section{Generalization using Reeder's formula}\label{sec5}

Let $\pi$ and $\pi'$ be  representations of $\UU_n\fq$ and $\UU_m\fq$ respectively,  $n\ge m$. We have calculated $m(\pi,\pi')$ when $\pi$ and $\pi'$ are both unipotent. The goal of this section is  to prove Theorem \ref{1.2}, which extends the previous result when $\pi'$ is an arbitrary representation. We shall follow the method in our previous work \cite{LW}.

\subsection{Reeder's formula}
Let $G$ be a connected reductive algebraic
group over $\mathbb{F}_q$,  $H \subset G$ be a connected reductive subgroup of $G$ over $\Fq$, and $T$ and $S$ be $F$-stable maximal tori of $G$ and $H$ respectively.

In \cite{R}, Reeder gives a formula  for the multiplicity $\langle R_{T,\theta}^G,R_{S,\theta'}^H\rangle _{H^F}$ when $G$ and $H$ are simple. More precisely, by \cite [Theorem 1.4]{R} there is a polynomial $M(t)$ of degree at most $\delta$ whose coefficients depend on the characters $\theta$ and $\theta'$ of $T^F$ and $S^F$ respectively, and an integer $m \ge 1$ such that
\[
\langle R_{T,\theta^\nu}^G,R_{S,\theta^{\prime \nu}}^H\rangle _{H^{F^\nu}}=M(q^\nu)
\]
 for all positive integers $\nu \equiv 1$ mod $m$, where $\theta^\nu=\theta\circ N_\nu^T$ and $ N_\nu^T: T^{F^\nu}\to T^F$ is the norm map. The degree $\delta$ given in \cite{R} is optimal. Moreover, \cite[Proposition 7.4]{R} gives an explicit formula for the leading coefficient  in $M(t)$.  In order to calculate $\langle R^{\UU_{n+1}}_{L}(\tau\otimes\pi'),\pi_\lambda\rangle _{\UU _n(\Fq)}$ using Reeder's method, it is necessary to extend his result from connected simple algebraic groups to unitary groups. For the notations below we refer the readers to  \cite{R} and \cite{LW}, and from now on we put $(G^F,H^F)=(\UU_{n+1}\fq,\UU_{n}\fq)$. In {\it loc. cit.} we obtained the following:

 \begin{proposition}\label{prop5.1}
  Assume that $(G^F,H^F)=(\UU_{n+1}\fq,\UU_{n}\fq)$. Then
 \[
\langle R^G_{T,\chi},R^H_{S,\eta}\rangle _{H^F}=\sum_{\mbox{\tiny$\begin{array}{c}\iota\in I(S)^F\\
\delta_\iota=0\end{array}$}}\frac{(-1)^{\mathrm{rk}(G_\iota)+\mathrm{rk}(H_\iota)+\mathrm{rk}(T)+\mathrm{rk}(S)}}{|\bar{N}_{H}(\iota,S)^F|}\langle \chi_v, \eta_\varsigma\rangle _{Z_\iota^F}.
\]
where $v=j_{G_\iota}^{-1}(\mathrm{cl}(T,G))$ and $\varsigma=j_{H_\iota}^{-1}(\mathrm{cl}(S,H))$ for some $
\iota$ such that $j_{G_\iota}^{-1}(\mathrm{cl}(T,G))$ and $j_{H_\iota}^{-1}(\mathrm{cl}(S,H))$ are not empty.
\end{proposition}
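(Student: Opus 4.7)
The plan is to adapt Reeder's derivation of \cite[Theorem 1.4]{R} directly to the reductive pair $(G,H)=(\UU_{n+1},\UU_n)$. I would start from the standard inner product expansion
\[
\langle R^G_{T,\chi}, R^H_{S,\eta}\rangle_{H^F} = \frac{1}{|H^F|} \sum_{h \in H^F} R^G_{T,\chi}(h)\, \overline{R^H_{S,\eta}(h)},
\]
write each $h$ in Jordan form $h=su$, and substitute the Deligne--Lusztig character formula (\ref{dl}) for both factors. This produces a multi-fold sum indexed by $(s,u,g,k)$ where $s$ is semisimple, $u$ is unipotent in $C^0_H(s)^F\cap C^0_G(s)^F$, and $g\in G$, $k\in H$ conjugate $s$ into $T$ and $S$ respectively; the $u$-sum splits off as a product of two Green function values while the $(g,k)$-sum carries the character data.

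The next step is to reorganize the outer sum by grouping $s$ into $H^F$-orbits whose elements are $G^F$-conjugate to an element of $T$, and then to parameterize these orbits by the index set $I(S)^F$ following \cite{R} and \cite{LW}. For each representative $s_\iota$ with $\iota \in I(S)^F$, the simultaneous presence of $s_\iota$ in conjugates of $T$ and $S$ selects tori $v \subset G_\iota := C^0_G(s_\iota)$ and $\varsigma \subset H_\iota := C^0_H(s_\iota)$ via the maps $j_{G_\iota}$ and $j_{H_\iota}$, and the characters $\chi,\eta$ restrict to characters $\chi_v, \eta_\varsigma$ of the common reductive subgroup $Z_\iota^F$ determined by $(v,\varsigma)$.

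The Green function piece is then handled by orthogonality of Green functions inside $Z_\iota$: the internal sum over the relevant unipotent elements collapses to an indicator that $v$ and $\varsigma$ define the same $Z_\iota^F$-class of tori, which is exactly encoded by $\delta_\iota = 0$. The normalization $1/|\bar{N}_H(\iota,S)^F|$ accounts for the multiplicity with which a given $\iota$ appears in the reorganization of the sum, and the sign $(-1)^{\mathrm{rk}(G_\iota)+\mathrm{rk}(H_\iota)+\mathrm{rk}(T)+\mathrm{rk}(S)}$ comes from collecting the $\varepsilon_{G_\iota}\varepsilon_{H_\iota}\varepsilon_T\varepsilon_S$ factors that enter via the normalized Deligne--Lusztig/Green function identities.

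The main obstacle is that Reeder's arguments in \cite{R} are phrased for a pair of connected simple algebraic groups, whereas $\UU_{n+1}\supset \UU_n$ are reductive (with non-trivial center) and the centralizers $G_\iota, H_\iota$ of their semisimple elements are products of general linear groups over extensions of $\Fq$ together with smaller unitary groups. The task is therefore to verify that every combinatorial step --- the parameterization by $I(S)$, the factorization of the Green function contribution, the identification of $Z_\iota$, and the bookkeeping of signs --- depends only on the root-datum features that are preserved under passage to such reductive subgroups. Once this verification is carried out (as done in \cite{LW}), the desired identity follows.
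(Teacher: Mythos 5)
Your outline follows the same strategy that the paper itself takes: expand the inner product, substitute the Deligne--Lusztig character formula, reorganize the semisimple part by $I(S)^F$, collapse the unipotent/Green function contribution by orthogonality to get the $\delta_\iota=0$ condition, and collect the normalization and sign factors --- which is precisely Reeder's argument adapted to the reductive pair $(\UU_{n+1},\UU_n)$, carried out in \cite{LW}. The paper does not reproduce a proof in this article but explicitly refers to \cite{LW} for it, and your proposal correctly identifies both the method and the point where the genuine work lies (verifying that Reeder's combinatorics for connected simple groups transfer to unitary groups, where centralizers of semisimple elements remain connected and are products of unitary and general linear factors), ending by deferring to \cite{LW} as the paper itself does.
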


Recall that for a semisimple element $s\in\UU_n\fq$, we say that $1\not\in s$ if 1 is not an eigenvalue of $s$. If a pair $(T,\theta)$ corresponds to $(T^*,s)$, then we say that $1\not\in (T,\theta)$ if $1\not\in s$. Then by Proposition 4.6 and Equation (5.3) in \cite{LW} which further explicate the above Reeder's formula   Proposition \ref{prop5.1}, we can easily deduce the following result.

 \begin{proposition}\label{proposition:A}
 Assume that $(G^F,H^F)=(\UU_{n+1}\fq,\UU_{n}\fq)$. Let $T_1\times T_2$ and $T_1'\times T_2$ be F-stable maximal tori of $G$. Assume that $1\notin (T_1,\theta)$ and $1\notin (T_1',\theta')$. Then
 \[
\varepsilon_{T_1}\langle R^G_{T_1\times T_2,\theta\otimes1},R^H_{S,1}\rangle _{H^F}=\varepsilon_{T_1'}\langle R^G_{T_1'\times T_2,\theta'\otimes1},R^H_{S,1}\rangle _{H^F}.
\]
\end{proposition}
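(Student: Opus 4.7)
The plan is to apply Reeder's formula (Proposition \ref{prop5.1}) to both sides of the claimed identity and then invoke Proposition 4.6 together with Equation (5.3) of \cite{LW} to identify the resulting sums up to the sign $\varepsilon_{T_1}\varepsilon_{T_1'}^{-1}$. First I would expand
\[
\langle R^G_{T_1\times T_2,\theta\otimes 1},R^H_{S,1}\rangle_{H^F}=\sum_{\substack{\iota\in I(S)^F\\ \delta_\iota=0}}\frac{(-1)^{\mathrm{rk}(G_\iota)+\mathrm{rk}(H_\iota)+\mathrm{rk}(T_1)+\mathrm{rk}(T_2)+\mathrm{rk}(S)}}{|\bar{N}_H(\iota,S)^F|}\langle (\theta\otimes 1)_v,1_\varsigma\rangle_{Z_\iota^F},
\]
and the analogous formula with $(T_1,\theta)$ replaced by $(T_1',\theta')$. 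Because the second factor $T_2$ and its trivial character are common to both sides, the only data that vary between the two expansions are the sign $(-1)^{\mathrm{rk}(T_1)}$ and the contribution of $(T_1,\theta)$ to the inner product on $Z_\iota^F$.

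Next I would use Proposition 4.6 of \cite{LW} to describe the subsets of indices $\iota$ that can contribute. Under the standing hypothesis $1\notin(T_1,\theta)$, the corresponding semisimple parameter has no eigenvalue equal to $1$, so the only $\iota$ that survive are those whose restriction to the $T_1$-factor forces the character pairing $\langle\theta_v,1\rangle$ to land in a single prescribed geometric class. The same holds, with the same family of admissible $\iota$'s, for the primed data under $1\notin(T_1',\theta')$. Consequently, the two expansions are sums over \emph{the same} index set.

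Third, Equation (5.3) of \cite{LW} expresses the inner product $\langle (\theta\otimes 1)_v,1_\varsigma\rangle_{Z_\iota^F}$ as a product of a factor depending only on $(T_2,1)$ and $\iota$ with a factor depending on $(T_1,\theta)$ and $\iota$; the latter factor, for admissible $\iota$, evaluates to the same universal constant (independent of the particular choice of $(T_1,\theta)$ with $1\notin\theta$) multiplied by $\varepsilon_{T_1}\cdot(-1)^{\mathrm{rk}(T_1)}$. Writing out each admissible term and collecting, one sees that $\varepsilon_{T_1}$ times the multiplicity becomes a sum whose $(T_1,\theta)$-dependence disappears entirely, so it equals the corresponding expression with $(T_1,\theta)$ replaced by $(T_1',\theta')$ and $\varepsilon_{T_1}$ replaced by $\varepsilon_{T_1'}$.

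The main obstacle will be the sign bookkeeping in the last step: the ranks $\mathrm{rk}(T_1)$ and $\mathrm{rk}(T_1')$ need not coincide, so one must verify that the discrepancy $(-1)^{\mathrm{rk}(T_1)-\mathrm{rk}(T_1')}$ produced inside Reeder's formula is cancelled exactly by the ratio $\varepsilon_{T_1}\varepsilon_{T_1'}^{-1}$ appearing in the evaluation of the character pairing via (5.3). Once that cancellation is checked, the identity follows immediately from term-by-term comparison of the two sums.
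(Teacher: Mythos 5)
Your plan invokes exactly the tools the paper cites (Reeder's formula together with Proposition~4.6 and Equation~(5.3) of \cite{LW}), and the paper itself gives no more detail than "we can easily deduce," so the overall route agrees. However, there is a genuine gap in your third and fourth paragraphs where the key cancellation lives. You claim the $(T_1,\theta)$-dependent factor extracted from Equation~(5.3) equals a universal constant times $\varepsilon_{T_1}\cdot(-1)^{\mathrm{rk}(T_1)}$ — but since $\varepsilon_{T_1}=(-1)^{\mathrm{rk}(T_1)}$ by definition, that product is identically $1$, so your stated factorization is vacuous and does not produce the sign you need. You then label the sign bookkeeping "the main obstacle" and assert without verification that it is cancelled; that is precisely the step that must be carried out, not just flagged.

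The actual mechanism is cleaner than what you sketched. The entire point of assuming $1\notin(T_1,\theta)$ (resp. $1\notin(T_1',\theta')$) is that for every index $\iota$ with $\delta_\iota=0$ the pairing $\langle(\theta\otimes 1)_v,1_\varsigma\rangle_{Z_\iota^F}$ either vanishes or takes a value independent of the particular pair $(T_1,\theta)$; the admissible $\iota$ are exactly those for which the $v$-image of $Z_\iota$ stays inside the $T_2$-factor, so the contribution is the same no matter which $(T_1,\theta)$ with $1\notin\theta$ one inserts. Once that input from \cite{LW} is in place, the only remaining $T_1$-dependence in Reeder's sum sits in $(-1)^{\mathrm{rk}(T)}=(-1)^{\mathrm{rk}(T_1)+\mathrm{rk}(T_2)}$ in the numerator, and multiplying by the outer $\varepsilon_{T_1}=(-1)^{\mathrm{rk}(T_1)}$ removes it; the same for the primed data with $\varepsilon_{T_1'}$. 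What you owe, and what the cited Proposition~4.6/Equation~(5.3) supply, is (a) the identification of the admissible $\iota$ as independent of $(T_1,\theta)$, and (b) the constancy of the per-$\iota$ pairing value over such choices. Your paragraphs only gesture at these; spelling them out is where the proposition actually gets proved.
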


Loosely speaking, this proposition says that for Deligne-Lusztig characters $\chi'$ and $\chi$ of $\UU_{n+1}\fq$ and   $\UU_{n}\fq$ respectively, if $\chi$ is unipotent, then to calculate $m(\chi, \chi')$ one only needs to consider the ``unipotent part" of $\chi'$.

Recall that for a semisimple element $s\in \UU_n\fq$,  $\pi_s^{reg}$ denotes  the unique irreducible regular character in $\mathcal{E}(\UU_n\fq,s)$. We have the following corollary of
Proposition \ref{proposition:A}.

 \begin{corollary}\label{5c1}
 Let $s_0$ and $s$ be two semisimple elements in $\UU_\ell\fq$ such that $1\not\in s_0, s$.   Assume that $s_0$ is regular so that $\pi^{reg}_{s_0}=\pm R^{\UU_\ell}_{T_0^*,s}$, where $T_0=C_{\UU_\ell}(s_0)$. Let $\pi$ be an irreducible representation of $\UU_\ell\fq$ in $\mathcal{E}(\UU_\ell\fq,s)$, and $\pi_{\lambda}$ and $\pi_{\lambda'}$ be  unipotent representations of $\UU_{n}\fq$ and $\UU_{n+1-\ell}\fq$ respectively. Then the following hold.

  (i) If
$
  \langle R^{\UU_{n+1}}_{\UU_\ell\times\UU_{n+1-\ell}}(\pi^{reg}_{s_0}\otimes \pi_{\lambda'}),\pi_{\lambda}\rangle_{\UU_{n}\fq}=0,
$
  then
$
  \langle R^{\UU_{n+1}}_{\UU_\ell\times\UU_{n+1-\ell}}(\pi\otimes \pi_{\lambda'}),\pi_{\lambda}\rangle_{\UU_{n}\fq}=0.
$

  (ii) If
$
  \langle R^{\UU_{n+1}}_{\UU_\ell\times\UU_{n+1-\ell}}(\pi^{reg}_{s_0}\otimes \pi_{\lambda'}),\pi_{\lambda}\rangle_{\UU_{n}\fq}=\pm 1,
$
  then
 \[
 \begin{aligned}
  \langle R^{\UU_{n+1}}_{\UU_\ell\times\UU_{n+1-\ell}}(\pi\otimes \pi_{\lambda'}),\pi_{\lambda}\rangle  _{\UU_{n}\fq}=
  \left\{
\begin{array}{ll}
\pm 1, &  \textrm{if }\ \pi =\pi_{s}^{reg},\\
0, & \textrm{otherwise.}
\end{array}\right.
\end{aligned}
  \]
\end{corollary}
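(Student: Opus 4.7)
The plan is to reduce the inner product involving an arbitrary $\pi\in\mathcal{E}(\UU_\ell\fq,s)$ to the one involving $\pi^{reg}_{s_0}$ via Proposition~\ref{proposition:A}, and then to extract a combinatorial scalar that characterizes when $\pi\cong\pi^{reg}_s$.

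First I would express everything as a uniform combination of Deligne--Lusztig characters. Since $s_0$ is regular, $T_0=C_{\UU_\ell}(s_0)$ is an $F$-stable maximal torus with $|W_{T_0}(s_0)^F|=1$, so equation (\ref{reg}) collapses to $\pi^{reg}_{s_0}=\varepsilon_G\varepsilon_{T_0}R^{\UU_\ell}_{T_0,s_0}$. Invoking the fact that every irreducible character of $\UU_\ell\fq$ is uniform, I would write
\[
\pi=\sum_k c_k R^{\UU_\ell}_{T_k,s},\qquad \pi_{\lambda'}=\sum_i a_i R^{\UU_{n+1-\ell}}_{T_i,1},\qquad \pi_\lambda=\sum_j b_j R^{\UU_n}_{S_j,1},
\]
where each sum runs through appropriate systems of representatives of conjugacy classes of $F$-stable maximal tori. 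Transitivity of Deligne--Lusztig induction (Proposition \ref{3.1}) then converts both parabolic inductions into $\sum_{k,i} c_k a_i R^{\UU_{n+1}}_{T_k\times T_i,\,s\otimes 1}$, and analogously for $\pi^{reg}_{s_0}$.

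Next I would apply Proposition \ref{proposition:A} termwise. Since $1\notin s$ and $1\notin s_0$, it gives
\[
\langle R^{\UU_{n+1}}_{T_k\times T_i,\,s\otimes 1},R^{\UU_n}_{S_j,1}\rangle_{\UU_n\fq}=\varepsilon_{T_0}\varepsilon_{T_k}\langle R^{\UU_{n+1}}_{T_0\times T_i,\,s_0\otimes 1},R^{\UU_n}_{S_j,1}\rangle_{\UU_n\fq}.
\]
Summing against $c_k a_i b_j$, the only $k$-dependent factor on the right is $\varepsilon_{T_k}$, so it pulls out to produce the master identity
\[
\langle R^{\UU_{n+1}}_{\UU_\ell\times\UU_{n+1-\ell}}(\pi\otimes\pi_{\lambda'}),\pi_\lambda\rangle_{\UU_n\fq}=\pm\,\varepsilon_{T_0}\,\sigma\cdot\langle R^{\UU_{n+1}}_{\UU_\ell\times\UU_{n+1-\ell}}(\pi^{reg}_{s_0}\otimes\pi_{\lambda'}),\pi_\lambda\rangle_{\UU_n\fq},
\]
where $\sigma:=\sum_k c_k\varepsilon_{T_k}$.

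The key step, and what I expect to be the main obstacle, is evaluating the scalar $\sigma$. The plan is to pair $\pi$ with $\pi^{reg}_s$ using the explicit expansion (\ref{reg}) together with the weak orthogonality Proposition \ref{3.2}: the resulting double sum telescopes to the clean identity $\sigma=\varepsilon_G\langle\pi,\pi^{reg}_s\rangle_{\UU_\ell\fq}$. Since $\pi$ and $\pi^{reg}_s$ are both irreducible, this inner product equals $1$ when $\pi\cong\pi^{reg}_s$ and $0$ otherwise, so $\sigma=\pm 1$ precisely in the regular case and vanishes in all others. Substituting $\sigma$ back into the master identity yields (i) and (ii) simultaneously; everything beyond this $\sigma$-computation is formal bookkeeping.
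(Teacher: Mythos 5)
Your proposal is correct and follows essentially the same route as the paper: expand $\pi$, $\pi_{\lambda'}$ (and implicitly $\pi_\lambda$) as uniform combinations of Deligne--Lusztig characters, apply Proposition~\ref{proposition:A} termwise to pull out the constant $\sigma=\sum_{T^*\ni s}\varepsilon_T C_{T^*}$, and then identify $\sigma$ as $\pm\langle\pi,\pi^{reg}_s\rangle_{\UU_\ell\fq}$. The only cosmetic difference is how that last identity is derived: you expand $\pi^{reg}_s$ via~(\ref{reg}) and invoke weak orthogonality to telescope the double sum, whereas the paper inserts $1=\varepsilon_T^2$ and directly cites $\langle\pi_s^{reg},R^{\UU_\ell}_{T^*,s}\rangle=\varepsilon_T\varepsilon_{\UU_\ell}$ (a one-step consequence of~(\ref{reg})). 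These are the same computation organized in slightly different order.
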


\begin{proof} (i)
Since
$
\pi^{reg}_{s_0}=\pm R^{\UU_\ell}_{T_0^*,s_0},
$
we have
\[
R^{\UU_{n+1}}_{\UU_\ell\times\UU_{n+1-\ell}}(\pi^{reg}_{s_0}\otimes \pi_{\lambda'})=\pm\frac{1}{|W_{n+1-\ell}|}\sum_{ w\in W_{n+1-\ell}}\sigma_{\lambda'}(ww_0) R^{\UU_{n+1}}_{T_0^*\times T_w^*, (s_0,1)},
\]
and therefore
\[
\langle R^{\UU_{n+1}}_{\UU_\ell\times\UU_{n+1-\ell}}(\pi^{reg}_{s_0}\otimes \pi_{\lambda'}),\pi_{\lambda}\rangle_{\UU_{n}\fq}=\pm \frac{1}{|W_{n+1-\ell}|}\sum_{ w\in W_{n+1-\ell}}\sigma_{\lambda'}(ww_0) \langle R^{\UU_{n+1}}_{T_0^*\times T_w^*, (s_0,1)} ,\pi_{\lambda}\rangle_{\UU_{n}\fq}.
\]
Let us write $\pi\in \cal{E}(\UU_\ell\fq, s)$ as
\[
\pi=\sum_{ T^*\ni s} C_{T^*} R^{\UU_\ell}_{T^*,s}.
\]
Then similarly
\[
  \langle R^{\UU_{n+1}}_{\UU_\ell\times\UU_{n+1-\ell}}(\pi\otimes \pi_{\lambda'}),\pi_{\lambda}\rangle_{\UU_{n}\fq}= \frac{1}{|W_{n+1-\ell}|}\sum_{T^*\ni s}\sum_{ w\in W_{n+1-\ell}}C_{T^*}\sigma_{\lambda'}(ww_0) \langle R^{\UU_{n+1}}_{T^*\times T_w^*, (s,1)} ,\pi_{\lambda}\rangle_{\UU_{n}\fq}.
\]
By Proposition \ref{proposition:A}, up to sign the last term is  equal to
\begin{align}
&\frac{1}{|W_{n+1-\ell}|}\sum_{T^*\ni s}\sum_{ w\in W_{n+1-\ell}}\varepsilon_{T_0}\varepsilon_TC_{T^*}\sigma_{\lambda'}(ww_0)C_{T^*}\langle R^{\UU_{n+1}}_{T_0^*\times T_w^*,(s_0,1)},\pi_{\lambda}\rangle_{\UU_{n}\fq}\nonumber\\
=&\varepsilon_{T_0}\sum_{T^*\ni s}\varepsilon_TC_{T^*}\langle R^{\UU_{n+1}}_{\UU_\ell\times \UU_{n+1-\ell}}(\pi^{reg}_{s_0}\otimes \pi_{\lambda'}),\pi_{\lambda}\rangle_{\UU_{n}\fq}.\label{5.3-mult}
\end{align}
This finishes the proof of (i).

(ii) By (\ref{reg}) we have
\[
\langle \pi_{s}^{reg},R^{\UU_\ell}_{T^*,s}\rangle_{\UU_{\ell}\fq}=\varepsilon_{T}\varepsilon_{\UU_\ell}.
\]
Then up to sign we have
\[
\begin{aligned}
(\ref{5.3-mult}) & =\varepsilon_{T_0}\sum_{ T^*\ni s}\varepsilon_TC_{T^*}\\
& =\varepsilon_{T_0}\sum_{ T^*\ni s}\varepsilon_{T}C_{T^*}\cdot\varepsilon_{T}\varepsilon_{\UU_\ell}\langle \pi_{s}^{reg},R^{\UU_\ell}_{T^*,s}\rangle_{\UU_{\ell}\fq}\\
& =\varepsilon_{T_0}\varepsilon_{\UU_\ell}\langle \pi_{s}^{reg},\sum_{ T^*\ni s} C_{T^*} R^{\UU_\ell}_{T^*,s}\rangle_{\UU_{\ell}\fq},\\
& = \varepsilon_{T_0}\varepsilon_{\UU_\ell}\langle \pi_{s}^{reg}, \pi\rangle_{\UU_\ell\fq}=\left\{
\begin{array}{ll}
\pm 1, &  \textrm{if }\ \pi =\pi_{s}^{reg},\\
0, & \textrm{otherwise.}
\end{array}\right.
\end{aligned}
\]
The proof of (ii) is done.
\end{proof}

\subsection{Branching law for $\UU_n\fq$}
We shall keep the notations in Corollary \ref{5c1}. By this corollary, for the Bessel case we only need to calculate
  \[
  \langle R^{\UU_{n+1}}_{\UU_\ell\times\UU_{n+1-\ell}}(\pi^{reg}_{s_0}\otimes \pi_{\lambda'}),\pi_{\lambda}\rangle_{\UU_{n}\fq}
  \]
  where $s_0$ is regular semisimple. To this end we need the explicit theta lifting of $R^{\UU_{n+1}}_{\UU_\ell\times \UU_{n+1-\ell}}(\pi^{reg}_{s_0}\otimes \pi_{\lambda'})$, which was given in \cite[Theorem 2.6]{AMR}:

 \begin{proposition}\label{t2}
Let $s_0$ be a regular semisimple element of $\UU_\ell\fq$ such that $1\not\in s_0$.  Then (up to sign)
\[
\Theta_{n+1, n'} (R^{\UU_{n+1}}_{\UU_\ell\times\UU_{n+1-\ell}}(\pi^{reg}_{s_0}\otimes \pi_{\lambda'}))= R^{\UU_{n'}}_{\UU_\ell\times\UU_{n'-\ell}}(\pi^{reg}_{s_0}\otimes\Theta_{n+1-\ell, n'-\ell}( \pi_{\lambda'})).
\]
\end{proposition}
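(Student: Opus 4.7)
The plan is to reduce to the unipotent theta correspondence by going through the Lusztig correspondence, and then invoke the explicit unipotent lift from Proposition \ref{u1}.

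First, since $s_0$ is regular semisimple in $\UU_\ell\fq$ with $1\notin s_0$, its centralizer $T_0^*:=C_{\UU_\ell}(s_0)$ is an $F$-stable maximal torus, and $\pi^{reg}_{s_0}=\pm R^{\UU_\ell}_{T_0,\theta_0}$ for a regular character $\theta_0$ corresponding to $s_0$. Viewing $(s_0,I_{n+1-\ell})$ as a semisimple element of $\UU_{n+1}$ via the Levi embedding $\UU_\ell\times\UU_{n+1-\ell}\hookrightarrow\UU_{n+1}$, the assumption $1\notin s_0$ ensures that $s_0$ and $I_{n+1-\ell}$ share no eigenvalues, so the centralizer splits:
\[
C_{\UU_{n+1}}(s_0,I_{n+1-\ell})\cong T_0^*\times \UU_{n+1-\ell}.
\]
By Proposition \ref{irr}, $R^{\UU_{n+1}}_{\UU_\ell\times\UU_{n+1-\ell}}(\pi^{reg}_{s_0}\otimes\pi_{\lambda'})$ is (up to sign) an irreducible representation in the Lusztig series $\mathcal{E}(\UU_{n+1},(s_0,1))$, and the Lusztig correspondence $\mathcal{L}_{(s_0,1)}$ sends it to the unipotent representation $\mathbf{1}_{T_0^*}\otimes \pi_{\lambda'}$ of $T_0^*\times\UU_{n+1-\ell}$, by the compatibility of $\mathcal{L}$ with Lusztig induction (Proposition \ref{Lus}).

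Next, I would invoke the main compatibility result from \cite{AMR} between theta lifting and the Lusztig correspondence when the semisimple parameter has no eigenvalue $1$: theta lifting sends $\mathcal{E}(\UU_{n+1},(s_0,1))$ into $\mathcal{E}(\UU_{n'},(s_0,1))$, and the corresponding map on the unipotent parts $\mathcal{E}(T_0^*\times \UU_{n+1-\ell},1)\to \mathcal{E}(T_0^*\times\UU_{n'-\ell},1)$ factors as the identity on the toral factor tensored with the unipotent theta lift $\Theta_{n+1-\ell,n'-\ell}$. Applied to $\mathbf{1}_{T_0^*}\otimes\pi_{\lambda'}$, this yields $\mathbf{1}_{T_0^*}\otimes \Theta_{n+1-\ell,n'-\ell}(\pi_{\lambda'})$. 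Translating back through $\mathcal{L}_{(s_0,1)}$ and applying Proposition \ref{irr} to the semisimple element $(s_0,I_{n'-\ell})\in\UU_{n'}$ identifies this, up to sign, with
\[
R^{\UU_{n'}}_{\UU_\ell\times\UU_{n'-\ell}}\!\bigl(\pi^{reg}_{s_0}\otimes \Theta_{n+1-\ell,n'-\ell}(\pi_{\lambda'})\bigr),
\]
which is the claimed formula.

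The main obstacle is Step two, the compatibility theorem of \cite{AMR}, which is the substantive ingredient. A self-contained route would be to expand both sides as alternating sums of Deligne-Lusztig virtual characters, using Theorem \ref{thm3.3} on the unipotent factor $\pi_{\lambda'}$ and the explicit formula $\pi^{reg}_{s_0}=\pm R^{\UU_\ell}_{T_0,\theta_0}$ on the other, and then apply the known formula for the theta lift of $R^{\UU_{n+1}}_{T_0\times T_w,\theta_0\otimes 1}$ to $\UU_{n'}$. The Lusztig series with parameter $(s_0,1)$ is preserved by the lift because $\theta_0$ contributes the same regular character on the torus block $T_0^*$ in both $\UU_{n+1}$ and $\UU_{n'}$, while the Weil-representation character on the trivial block reduces to the unipotent theta correspondence. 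Careful sign bookkeeping (involving $\varepsilon_G\varepsilon_T$ factors) combined with Proposition \ref{u1} then produces the equality. The delicate point throughout is verifying that the torus data on the $\UU_\ell$ side is genuinely unaltered, which is exactly where the hypothesis $1\notin s_0$ is used.
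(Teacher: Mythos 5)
The paper gives no proof of this statement: it simply cites \cite[Theorem 2.6]{AMR} in the sentence immediately preceding the proposition, and your proposal also rests entirely on ``the main compatibility result from \cite{AMR}'' as the substantive ingredient, so in content the two are the same citation with different packaging. Your preliminary steps (splitting $C_{\UU_{n+1}}(s_0,1)\cong T_0^*\times\UU_{n+1-\ell}$, irreducibility via Proposition \ref{irr}, passing through $\mathcal{L}_{(s_0,1)}$) are correct and do clarify exactly where the hypothesis $1\notin s_0$ enters, but they do not reduce the dependence on AMR; the alternative ``self-contained route'' you sketch, namely expanding both sides as alternating sums of Deligne--Lusztig virtual characters via $\pi^{reg}_{s_0}=\pm R^{\UU_\ell}_{T_0,\theta_0}$ and Theorem \ref{thm3.3} and then applying the explicit theta lift of a single $R^{\UU_{n+1}}_{T_0\times T_w,\theta_0\otimes 1}$, is indeed how one would prove it from first principles and is essentially what AMR do.
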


As a consequence, we obtain the following extension of Theorem \ref{UU} using the same see-saw arguments.

 \begin{proposition}\label{UU2}
Let $s_0$ be a regular semisimple element of $\UU_\ell\fq$ such that $1\not\in s_0$, and let $\lambda'$ and $\lambda''$ be partitions of $n+1-\ell$ and $n-\ell$ respectively. Then

(i)
$
 \langle R^{\UU_{n+1}}_{\UU_\ell\times\UU_{n+1-\ell}}(\pi^{reg}_{s_0}\otimes \pi_{\lambda'}),\pi_{\lambda}\rangle_{\UU_{n}\fq}=\left\{
\begin{array}{ll}
\pm1, &  \textrm{if } \lambda \textrm{ and } \lambda' \textrm{ are }  2\textrm{-transverse,}\\
0, & \textrm{otherwise.}
\end{array}\right.
$

(ii)
$
 \langle R^{\UU_{n}}_{\UU_\ell\times\UU_{n-\ell}}(\pi^{reg}_{s_0}\otimes \pi_{\lambda''}),\pi_{\lambda}\otimes \omega_{n}\rangle_{\UU_{n}\fq}=\left\{
\begin{array}{ll}
\pm1, &  \textrm{if } \lambda \textrm{ and } \lambda'' \textrm{ are }  2\textrm{-transverse,}\\
0, & \textrm{otherwise.}
\end{array}\right.
$
\end{proposition}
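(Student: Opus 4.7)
The plan is to mirror the proof of Theorem \ref{UU} verbatim, with the role of the auxiliary non-conjugate-self-dual cuspidal $\tau$ of $\GG_\ell\fq$ played by the regular character $\pi^{reg}_{s_0}$ of $\UU_\ell\fq$, and with each invocation of Proposition \ref{theta} replaced by Proposition \ref{t2}. Induction will be on $n$; the base case collapses to Theorem \ref{UU} after enough reductions. I treat (i) first; part (ii) is proved identically by swapping the Bessel see-saw for its Fourier-Jacobi counterpart and invoking Proposition \ref{7.31} in place of Proposition \ref{7.21}.

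First I would establish the vanishing analog of Lemma \ref{0}: if $|\lambda_1-\lambda'_1|>1$, then the multiplicity in (i) is zero. For $\lambda'_1>\lambda_1+1$, the see-saw diagram with $\UU_{n+1-\lambda'_1}\times\UU_{n+1-\lambda'_1}$ on top opposite $\UU_{n+1}$, and $\UU_{n+1-\lambda'_1}$ on bottom opposite $\UU_n\times\UU_1$, combined with Proposition \ref{t2} used to lift $R^{\UU_{n+1-\lambda'_1}}_{\UU_\ell\times\UU_{n+1-\lambda'_1-\ell}}(\pi^{reg}_{s_0}\otimes\pi_{\lambda'_*})$ via $\Theta_{n+1-\lambda'_1,\,n+1}$ back to $R^{\UU_{n+1}}_{\UU_\ell\times\UU_{n+1-\ell}}(\pi^{reg}_{s_0}\otimes\pi_{\lambda'})$, reduces the pairing to one involving $\Theta_{n,n+1-\lambda'_1}(\pi_\lambda)$, which vanishes by Corollary \ref{CU1}(i). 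The case $\lambda'_1<\lambda_1-1$ is handled by the upward see-saw with $\UU_{n+\lambda_1-1}\times\UU_{n+\lambda_1-1}$, reducing to a pairing whose unipotent-side argument was already carried out in Lemma \ref{0}, the factor $\pi^{reg}_{s_0}$ being transported unchanged by Proposition \ref{t2}.

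For the three remaining cases $\lambda'_1\in\{\lambda_1-1,\lambda_1,\lambda_1+1\}$, I would run the same see-saw chains used in cases (i), (ii), (iii) of the proof of Theorem \ref{UU}, with $\UU_\ell$ replacing $\GG_\ell$ and $\pi^{reg}_{s_0}$ replacing $\tau$ throughout. In each case Proposition \ref{t2} ensures that the theta lift of $R^{\UU_N}_{\UU_\ell\times\UU_{N-\ell}}(\pi^{reg}_{s_0}\otimes\pi_{\lambda'_{\#}})$ is again an induced representation of the same shape, with the partition in the second factor evolving according to Corollary \ref{CU1} or Corollary \ref{CU3}. After the see-saw identity is applied, the multiplicity becomes a pairing of two unipotent representations on a unitary group of strictly smaller rank, namely one of the form $m(\pi_{\lambda_*},\pi_{\lambda'_*})$ or $m(\pi_{\mu'},\pi_{\lambda'_*})$, which is supplied by induction on $n$ combined with Theorem \ref{UU}. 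The resulting count of $\pm 1$ or $0$ matches the 2-transverse condition on $(\lambda,\lambda')$ because $\lambda_1$ and $\lambda'_1$ differ by at most one.

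The main obstacle will be the case $\lambda'_1=\lambda_1-1$, where the proof of Theorem \ref{UU} exploited the decomposition $\tau=R^{\GG_\ell}_{\GG_1\times\GG_{\ell-1}}(\tau_1\otimes\tau_2)$ in order to strip off a $\GG_1$-factor before the final see-saw; no analogous decomposition is available for $\pi^{reg}_{s_0}$. However, Proposition \ref{t2} applies directly to the whole induced representation $R^{\UU_{n+1}}_{\UU_\ell\times\UU_{n+1-\ell}}(\pi^{reg}_{s_0}\otimes\pi_{\lambda'})$, so the see-saw can be carried out without an intermediate splitting, and the regular factor $\pi^{reg}_{s_0}$ is simply carried along each theta lift and recovered at the end. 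The remaining care lies in checking at every step that the dimensional constraints ($\ell$ is fixed, $1\not\in s_0$, the partition in the second factor stays consistent) make Proposition \ref{t2} applicable, after which the reduction to the unipotent Gan-Gross-Prasad calculation is automatic.
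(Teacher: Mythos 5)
The overall plan (induction on $n$, see-saws, substituting Proposition~\ref{t2} for Proposition~\ref{theta}) is indeed the paper's strategy, and most of your outline (including the vanishing analog of Lemma~\ref{0} and cases $\lambda_1'\in\{\lambda_1,\lambda_1+1\}$) is sound. But the sentence where you assert that in case $\lambda_1'=\lambda_1-1$ ``Proposition~\ref{t2} applies directly to the whole induced representation, so the see-saw can be carried out without an intermediate splitting'' is a genuine gap, not just a ``remaining care'' check. The intermediate $\GG_1$-stripping in the proof of Theorem~\ref{UU}(iii) is not a cosmetic convenience: it is what aligns the ranks so that the see-saw closes. Concretely, with $B=n$ on the right of the see-saw, you must lift $\pi_\lambda$ down to $\UU_A$ and lift some $\rho=R^{\UU_A}_{\UU_\ell\times\UU_{A-\ell}}(\pi^{reg}_{s_0}\otimes\pi_\mu)$ up to $\UU_{n+1}$. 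If you take the ``clean'' choice $A=n-\lambda_1$ (so that $\Theta_{n,A}(\pi_\lambda)=\pi_{\lambda_*}$), then $|\mu|=|\lambda'|-\lambda_1'-2$, and by Corollary~\ref{CU1}(i) the lift $\Theta_{A-\ell,\,n+1-\ell}(\pi_\mu)$ can never contain $\pi_{\lambda'}$, so the see-saw identity degenerates to $0=0$. If instead you take $A=n-\lambda_1+2$ (so that the partition in the second factor does hit $\lambda'_*$), then $\Theta_{n,A}(\pi_\lambda)$ is no longer a single irreducible, and more seriously $\Theta_{A-\ell,\,n+1-\ell}(\pi_{\lambda'_*})$ contains components $\pi_{\lambda''}$ with $\lambda''_1=\lambda_1+1$ (obtained from $[\lambda_1+1,\lambda'_*]$ by removing a $2$-hook in a row $\ge 2$); these have $|\lambda''_1-\lambda_1|\le 1$ and are \emph{not} eliminated by the Lemma~\ref{0} analog, so the see-saw produces unwanted terms you cannot discard.

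The correct way to salvage case (iii) is precisely to perform an intermediate splitting, but using the freedom in the regular data rather than a $\GG_1$-decomposition of a cuspidal of $\GG_\ell$. Since $\pi^{reg}_{s_0}=\pm R^{\UU_\ell}_{T_0^*,s_0}$ and Proposition~\ref{proposition:A} shows the pairing is insensitive (up to sign) to the choice of $(T_0^*,s_0)$ with $1\notin s_0$, one may choose $T_0^*\cong\GG_1\times T_0''^*$ inside a Levi $\GG_1\times\UU_{\ell-2}$ of $\UU_\ell$. Transitivity (Proposition~\ref{3.1}) then rewrites $R^{\UU_{n+1}}_{\UU_\ell\times\UU_{n+1-\ell}}(\pi^{reg}_{s_0}\otimes\pi_{\lambda'})$ as $R^{\UU_{n+1}}_{\GG_1\times\UU_{n-1}}\bigl(\theta_0\otimes R^{\UU_{n-1}}_{\UU_{\ell-2}\times\UU_{n+1-\ell}}(\pi^{reg}_{s_0''}\otimes\pi_{\lambda'})\bigr)$, and Proposition~\ref{7.21} in the basic $\GG_1$-case drops the outer induction, reducing to a pairing over $\UU_{n-1}\fq$ where the see-saw of Theorem~\ref{UU}(iii) (with Proposition~\ref{t2}) does close. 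Without such a device your case-(iii) argument does not reduce to the induction hypothesis.
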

\begin{proof}
We may prove proposition by induction on $n$. If $n=1$, then by Proposition \ref{proposition:A},
\[
\langle{\bf 1},\pm R^{\UU_2}_{\GG_1,\theta}\rangle_{\UU_1(\Fq)}=1, \quad \langle{\bf 1},\pm R^{\UU_2}_{\UU_1\times\UU_1},\theta'\otimes 1\rangle_{\UU_1(\Fq)}=0 \quad \textrm{and}\quad \langle {\bf 1}\otimes\omega_{1}, {\bf 1}\rangle_{\UU_1(\Fq)}=0,
\]
where $\theta$ is regular and $\theta'\neq 1$. Assume that the proposition holds for $n'<n$.  To finish the induction, one only needs to apply Proposition \ref{t2} instead of Proposition \ref{theta} to calculate the theta lifting in the proof of Theorem \ref{UU}. The rest of the proof is similar  and
will be left to the reader.
\end{proof}

Finally we are ready to prove Theorem \ref{1.2}. For convenience let us recall its statement:

 \begin{theorem}\label{UU3}
Let $\lambda$ and $\lambda'$ be partitions of $n$ and $m$ respectively, $m\leq n$.
Let $\pi\in \mathcal{E}(\UU_{\ell}\fq, s)$ with $\ell+m\le n+1$ and $1\not\in s$. Then
\[
m(\pi_{\lambda},R_{\UU_{\ell}\times\UU_m}^{\UU_{\ell+m}}(\pi\otimes \pi_{\lambda'}) )=\left\{
\begin{array}{ll}
1, &  \textrm{if }\ \lambda \textrm{ and } \lambda' \textrm{ are }  2\textrm{-transverse and } \pi=\pi^{reg}_s,\\
0, & \textrm{otherwise,}
\end{array}\right.
\]
where  $\pi_s^{reg}$ is the unique regular character in $\mathcal{E}(\UU_{\ell}\fq,s)$.
\end{theorem}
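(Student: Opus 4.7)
My plan is to combine the basic-case computations (Proposition \ref{UU2} and Corollary \ref{5c1}) with the parabolic-induction reductions (Propositions \ref{7.21} and \ref{7.31}), in the same see-saw spirit as the proof of Theorem \ref{UU}. First I dispose of the basic cases: the Bessel basic case is $\ell + m = n + 1$, and the Fourier-Jacobi basic case is $\ell + m = n$. In the basic Bessel case, Proposition \ref{UU2}(i) evaluates the inner product for $\pi = \pi_{s_0}^{reg}$ with $s_0$ regular semisimple and $1 \notin s_0$, giving $\pm 1$ precisely when $\lambda$ and $\lambda'$ are $2$-transverse and $0$ otherwise. Corollary \ref{5c1} then transfers this evaluation to arbitrary $\pi \in \cal{E}(\UU_\ell\fq, s)$: the multiplicity vanishes unless $\pi = \pi_s^{reg}$, and it equals $\pm 1$ exactly when additionally $\lambda$ and $\lambda'$ are $2$-transverse. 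Non-negativity of $m(\cdot,\cdot)$ forces the sign to be $+1$. The basic Fourier-Jacobi case is handled identically, using Proposition \ref{UU2}(ii) and the analogue of Corollary \ref{5c1} obtained by repeating its proof with Proposition \ref{7.31} in place of Proposition \ref{7.21}.

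For the general case I reduce to the basic case via Deligne-Lusztig induction. In the Bessel situation with $\ell + m < n + 1$, set $L = (n + 1 - \ell - m)/2 \ge 1$ and choose irreducible cuspidal representations $\tau_1, \tau_2$ of $\GG_{L_1}\fq$ and $\GG_{L_2}\fq$ (with $L_1 + L_2 = L$) that are not conjugate self-dual and whose semisimple parameters are disjoint from $s$. Put $\tau = R^{\GG_L}_{\GG_{L_1} \times \GG_{L_2}}(\tau_1 \otimes \tau_2)$. By Proposition \ref{7.21} together with induction in stages (Proposition \ref{3.1}),
\[
m(\pi_\lambda, R^{\UU_{\ell+m}}_{\UU_\ell \times \UU_m}(\pi \otimes \pi_{\lambda'})) = \langle R^{\UU_{n+1}}_{\UU_{n+1-m} \times \UU_m}(\tilde\pi \otimes \pi_{\lambda'}), \pi_\lambda \rangle_{\UU_n\fq},
\]
where $\tilde\pi := R^{\UU_{n+1-m}}_{\GG_L \times \UU_\ell}(\tau \otimes \pi)$. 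By Proposition \ref{irr}, $\tilde\pi$ is, up to sign, an irreducible member of some Lusztig series $\cal{E}(\UU_{n+1-m}\fq, \tilde s)$ with $1 \notin \tilde s$. The right-hand side is now in the basic Bessel setup and is handled by the previous step. The Fourier-Jacobi situation with $\ell + m < n$ reduces analogously via Proposition \ref{7.31}.

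The remaining point is to verify that $\tilde\pi = \pi_{\tilde s}^{reg}$ if and only if $\pi = \pi_s^{reg}$. Since the semisimple parameters of $\tau$ and $\pi$ are disjoint, the centralizer decomposes as $C_{\UU_{n+1-m}^*}(\tilde s) \cong C_{\GG_L^*}(s_\tau) \times C_{\UU_\ell^*}(s)$, and the argument in the proof of Proposition \ref{irr} yields $\cal{L}_{\tilde s}(\tilde\pi) = \cal{L}_{s_\tau}(\tau) \otimes \cal{L}_s(\pi)$ via Proposition \ref{Lus}. Each cuspidal $\tau_i$ is, up to sign, a Deligne-Lusztig character attached to a Coxeter torus and a regular character, so $s_{\tau_i}$ is regular semisimple, $C_{\GG_{L_i}^*}(s_{\tau_i})$ is a torus, and $\tau_i$ (hence $\tau$) is the regular character in its Lusztig series. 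Consequently $\cal{L}_{\tilde s}(\tilde\pi)$ is the regular unipotent (Steinberg) character of $C_{\UU_{n+1-m}^*}(\tilde s)^F$ exactly when $\cal{L}_s(\pi)$ is the Steinberg of $C_{\UU_\ell^*}(s)^F$, which by definition means $\pi = \pi_s^{reg}$. I expect the main obstacle to be the careful bookkeeping needed to carry the ``regular character'' property through the Deligne-Lusztig induction $R^{\UU_{n+1-m}}_{\GG_L \times \UU_\ell}$ together with the Lusztig correspondence, and to handle sign ambiguities at each stage; both are resolved by Proposition \ref{Lus} and the non-negativity of $m(\cdot,\cdot)$.
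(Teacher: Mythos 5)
Your Bessel-case argument follows essentially the same route as the paper: reduce to the basic case using Proposition~\ref{7.21} and induction in stages, observe via Proposition~\ref{irr} that the new first factor $\tilde\pi$ is irreducible with $1\notin\tilde s$, apply Proposition~\ref{UU2}(i) together with Corollary~\ref{5c1}, and finally check, via Proposition~\ref{Lus} and equation~\eqref{reg}, that $\tilde\pi$ is a regular character if and only if $\pi$ is. (The paper uses a single cuspidal $\tau$ on $\GG_{\ell_0}$ rather than a parabolic induction $R^{\GG_L}_{\GG_{L_1}\times\GG_{L_2}}(\tau_1\otimes\tau_2)$, but that makes no essential difference.) So far so good.

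The Fourier-Jacobi case, however, has a genuine gap. You write that the Fourier-Jacobi analogue of Corollary~\ref{5c1} is ``obtained by repeating its proof with Proposition~\ref{7.31} in place of Proposition~\ref{7.21}''; but the proof of Corollary~\ref{5c1} never invokes Proposition~\ref{7.21}. Its key ingredient is Proposition~\ref{proposition:A}, which is a Reeder-type statement proved only for the Bessel pair $(G^F,H^F)=(\UU_{n+1}\fq,\UU_n\fq)$, and there is no Fourier-Jacobi version of it in the paper (the Weil-representation twist $\pi_\lambda\otimes\omega_n$ does not fit the framework of Proposition~\ref{prop5.1} as stated). The paper avoids this issue altogether: after reducing to \[\langle R^{\UU_{n}}_{\UU_{n-m}\times\UU_m}(R^{\UU_{n-m}}_{\GG_{\ell_0}\times\UU_{\ell}}(\tau\otimes\pi)\otimes \pi_{\lambda'}),\pi_{\lambda}\otimes\omega_{n}\rangle_{\UU_{n}\fq},\] it introduces $\mu=[\mu_1,\lambda]$ with $\mu_1>\lambda_1$, uses $\Theta_{|\mu|,n}(\pi_\mu)=\pi_\lambda$ from Corollary~\ref{CU1}(ii), and applies the see-saw for the dual pairs $(\UU_n\times\UU_n,\UU_{|\mu|}\times\UU_1)$ and $(\UU_n,\UU_{|\mu|+1})$ to convert this Fourier-Jacobi pairing into a Bessel pairing of the form $\langle R^{\UU_{|\mu|+1}}_{\cdots}(\cdots),\pi_\mu\rangle$, to which the Bessel analysis (Proposition~\ref{UU2} and Corollary~\ref{5c1}) can then be applied. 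Your proposal is missing this conversion step, and without it the Fourier-Jacobi half of the theorem is not established.
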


\begin{proof} Put $\ell_0:=\lceil (n+1-\ell-m)/2\rceil$.
For the Bessel case, by \cite[Proposition 5.2]{LW} and Proposition \ref{3.1}, we only need to compute
\begin{equation}\label{mult}
\langle R^{\UU_{n+1}}_{\UU_{n+1-m}\times \UU_m}(R^{\UU_{n+1-m}}_{\GG_{\ell_0}\times\UU_{\ell}}(\tau\otimes\pi)\otimes \pi_{\lambda'}),\pi_{\lambda}\rangle_{\UU_{n}\fq},
\end{equation}
where $\tau$ is an irreducible cuspidal representation of $\GG_{\ell_0}\fq$ that is not conjugate self-dual. Then
$\tau=\pm R^{\GG_{\ell_0}}_{T, t}$ for some regular semisimple element $t$ of $\GG_{\ell_0}\fq$ such that $T:=C_{\GG_{\ell_0}}(t)$ is minisotropic.   Let $t'$ be the image of $t$ in $\UU_{2\ell_0}$. Note that $1\not\in t'$ and $t'$ is in fact regular in $\UU_{2\ell_0}$. Our assumption on the finite field $\bb{F}_q$ implies that we may choose $\tau$ such that $t'$ and $s$ have no common eigenvalues. Then by Proposition \ref{3.1} and Proposition \ref{irr},
\[
R^{\UU_{n+1-m}}_{\GG_{\ell_0}\times\UU_{\ell}}(\tau\otimes\pi)=R^{\UU_{n+1-m}}_{\UU_{2\ell_0}\times\UU_{\ell}}\left((R^{\UU_{2\ell_0}}_{\GG_{\ell_0}}\tau)\otimes\pi\right)
\]
is irreducible.
By Proposition \ref{UU2} and Corollary \ref{5c1},
\[
(\ref{mult})=\left\{
\begin{array}{ll}
\pm1, &  \textrm{if }\ \lambda \textrm{ and } \lambda' \textrm{ are }  2\textrm{-transverse and } R^{\UU_{n+1-m}}_{\GG_{\ell_0}\times\UU_{\ell}}(\tau\otimes\pi)=\pi^{reg}_{(t',s)}.\\
0, & \textrm{otherwise.}
\end{array}\right.
\]
By Proposition \ref{Lus} and (\ref{reg}), $R^{\UU_{n+1-m}}_{\GG_{\ell_0}\times\UU_{\ell}}(\tau\otimes\pi)$ is regular if and only if $\pi$ is regular, which completes the proof.

We now turn to the Fourier-Jacobi case. By Proposition \ref{7.31}, we only need to compute
\begin{equation}\label{mult-2}
\langle R^{\UU_{n}}_{\UU_{n-m}\times\UU_m}(R^{\UU_{n-m}}_{\GG_{\ell_0}\times\UU_{\ell}}(\tau\otimes\pi)\otimes \pi_{\lambda'}),\pi_{\lambda}\otimes\omega_{n}\rangle_{\UU_{n}\fq},
\end{equation}
where $\tau$ is as above.
By Proposition \ref{irr} again,
 \[
R^{\UU_{n}}_{\UU_{n-m}\times\UU_m}(R^{\UU_{n-m}}_{\GG_{\ell_0}\times\UU_{\ell}}(\tau\otimes\pi)\otimes \pi_{\lambda'})
\]
 is irreducible.
Put $\mu:=[\mu_1,\lambda]$, where $\mu_1>\lambda_1$. By Corollary \ref{CU1} (ii),
\[
\Theta_{|\mu|,n}(\pi_\mu)=\pi_{\lambda}.
\]
Consider the see-saw diagram
\[
\setlength{\unitlength}{0.8cm}
\begin{picture}(20,5)
\thicklines
\put(6.8,4){$\UU_{n}\times \UU_{n}$}
\put(7.5,1){$\UU_{n}$}
\put(12.3,4){$\UU_{|\mu|+1}$}
\put(12.0,1){$\UU_{|\mu|}\times \UU_1$}
\put(7.7,1.5){\line(0,1){2.1}}
\put(12.8,1.5){\line(0,1){2.1}}
\put(8,1.5){\line(2,1){4.2}}
\put(8,3.7){\line(2,-1){4.2}}
\end{picture}
\]
By the see-saw identity, one has
\begin{align*}
(\ref{mult-2})& =\langle R^{\UU_{n}}_{\UU_{n-m}\times\UU_m}(R^{\UU_{n-m}}_{\GG_{\ell_0}\times\UU_{\ell}}(\tau\otimes\pi)\otimes \pi_{\lambda'}),\Theta_{|\mu|,n}(\pi_\mu)\otimes\omega_{n}\rangle_{\UU_{n}\fq}\\
& = \langle\Theta_{n,|\mu|+1}(  R^{\UU_{n}}_{\UU_{n-m}\times\UU_m}(R^{\UU_{n-m}}_{\GG_{\ell_0}\times\UU_{\ell}}(\tau\otimes\pi)\otimes \pi_{\lambda'})),\pi_\mu\rangle_{\UU_{n}\fq}\\
& = \langle R^{\UU_{|\mu|+1}}_{\UU_{n-m}\times\UU_{|\mu|+1-n+m}}(R^{\UU_{n-m}}_{\GG_{\ell_0}\times\UU_{\ell}}(\tau\otimes\pi)\otimes\Theta_{m, |\mu|+1-n+m} (\pi_{\lambda'})),\pi_\mu\rangle_{\UU_{n}\fq}.\\
\end{align*}
The rest follows from our proof for the Bessel case.
\end{proof}

\end{document}